\numberwithin{equation}{section}
\theoremstyle{plain}
\newtheorem{thm}{Theorem}
\newtheorem{prop}{Proposition}[section]
\newtheorem{cor}[prop]{Corollary}
\newtheorem*{thmsansnom}{Theorem}
\newtheorem{lem}[prop]{Lemma}
\theoremstyle{definition}
\newtheorem{defi}[prop]{Definition}
\newtheorem{defiprop}[prop]{Definition-Proposition}
\newtheorem{exe}[prop]{Example}
\theoremstyle{remark}
 \newtheorem{rem}[prop]{Remark}
\newcommand{\gp}{{\mathfrak{p}}}
\newcommand{\gP}{{\mathfrak{P}}}
\newcommand{\Acal}{{\mathcal A}}
\newcommand{\Bcal}{{\mathcal B}}
\newcommand{\Ccal}{{\mathcal C}}
\newcommand{\Dcal}{{\mathcal D}}
\newcommand{\Ecal}{{\mathcal E}}
\newcommand{\Fcal}{{\mathcal F}}
\newcommand{\Gcal}{{\mathcal G}}
\newcommand{\Hcal}{{\mathcal H}}
\newcommand{\Ical}{{\mathcal I}}
\newcommand{\Lcal}{{\mathcal L}}
\newcommand{\Ocal}{{\mathcal O}}
\newcommand{\Vcal}{{\mathcal V}}
\newcommand{\Xcal}{{\mathcal X}}
\newcommand{\Ycal}{{\mathcal Y}}
\newcommand{\N}{{\mathbb{N}}}
\newcommand{\Z}{{\mathbb{Z}}}
\newcommand{\Q}{{\mathbb{Q}}}
\newcommand{\R}{{\mathbb{R}}}
\newcommand{\C}{{\mathbb{C}}}
\renewcommand{\P}{\mathbb{P}}
\newcommand{\A}{{\mathbb{A}}}
\renewcommand{\div}{\operatorname{div}}
\newcommand{\Gal}{\operatorname{Gal}}
\newcommand{\GL}{\operatorname{GL}}
\renewcommand{\Im}{\operatorname{Im}}
\newcommand{\ord}{\operatorname{ord}}
\newcommand{\Proj}{\operatorname{Proj}}
\newcommand{\Spec}{\operatorname{Spec}}
\newcommand{\Hom}{\operatorname{Hom}}
\renewcommand{\Re}{\operatorname{Re}}
\newcommand{\Sp}{\operatorname{Sp}}
\newcommand{\Pic}{\operatorname{Pic}}
\newcommand{\carac}{\textrm{char}}
\newcommand{\Tr}{\operatorname{Tr}}
\newcommand{\lmt}{\longmapsto}
\newcommand{\lra}{\longrightarrow}
\newcommand{\Llra}{\Longleftrightarrow}
\newcommand{\fonction}[5]{\begin{array}{c|ccl}           
#1: & #2 & \longrightarrow & #3 \\
    & #4 & \longmapsto & #5 \end{array}}
\newcommand{\fonctionsansnom}[4]{\begin{array}{ccl}      
#1 & \lra & #2 \\\
#3 & \lmt & #4 
\end{array}}
\newcommand{\quot}[2]                                    
{\raisebox{.6ex}{\newline$#1$}\!/\!\raisebox{-.6ex}{$#2$}}
\newcommand{\Kb}{\overline{K}}
\newcommand{\Qb}{\overline{\Q}}
\title{\Huge A ``tubular'' variant of Runge's method in all dimensions, with applications to integral points on Siegel modular varieties}
\author{Samuel Le Fourn\footnote{Email : \url {samuel.le_fourn@ens-lyon.fr}  }
\\ ENS de Lyon}
\date\today
\begin{document}
\maketitle

\begin{abstract}
	Runge's method is a tool to figure out integral points on algebraic curves  effectively in terms of height. This method has been generalised to varieties of any dimension, unfortunately its conditions of application are often too restrictive. In this paper, we provide a further generalisation intended to be more flexible while still effective, and exemplify its applicability by giving finiteness results for integral points on some Siegel modular varieties. As a special case, we obtain an explicit finiteness result for integral points on the Siegel modular variety $A_2(2)$.
\end{abstract}

\section*{Introduction}
\addcontentsline{toc}{section}{Introduction}  

One of the major motivations of number theory is the description of rational or integral solutions of diophantine equations, which from a geometric perspective amounts to understanding the behaviour of rational or integral points on algebraic varieties. In dimension one, there are many techniques and results providing a good overview of the situation such as the famous Faltings' theorem (for genus $\geq 2$ and algebraic points) or Siegel's theorem (for integral points and a function with at least three poles). Nevertheless, in many cases the quest for effectivity (meaning a bound on the height on these points) is still open, and effective methods are quite different from these two powerful theoretical theorems.

We focus in this paper on a method for integral points on algebraic varieties called \textit{Runge's method}, and its generalisations and applications for Siegel modular varieties.

To keep the introduction fluid, we first explain the principles behind Runge's method and its applicatons to Siegel modular varieties, with simplified statements and a minimum of references and details. Afterwards, we describe precisely the structure of the article, in particular where the details we omitted first are given.

On a smooth algebraic projective curve $C$ over a number field $K$, Runge's method proceeds as follows. Let $\phi \in K(C)$ be a nonconstant rational function on $C$. For any finite extension $L/K$, we denote by $M_L$ the set of places of $L$ (and $M_L^\infty$ the archimedean ones). For $S_L$ a finite set of places of $L$ containing $M_L^\infty$, we denote the ring of $S_L$-integers of $\Ocal_L$ by
\[
	\Ocal_{L,S_L} = \{ x \in L \, \, |x|_v \leq 1 \, \,  {\textrm{for all }} \, v \in M_L \backslash S_L \}.
\]
Now, let $r_L$ be the number of orbits of poles of $\phi$ under the action of $\Gal(\overline{L}/L)$. The \textit{Runge condition} on a pair $(L,S_L)$ is the inequality 
\begin{equation}
\label{eqRungeconditioncourbes}
	|S_L|<r_L.
\end{equation}
Then, Bombieri's generalisation (\cite{BombieriGubler}, paragraph 9.6.5 and Theorem 9.6.6) of Runge's theorem, the latter being formulated only for $L=K=\Q$ and $r_\Q \geq 2$, states that for every pair $(L,S_L)$ satisfying Runge condition and every point $P \in C(L)$ such that $\phi(P) \in \Ocal_{L,S_L}$, there is an \textit{absolute} bound $B$ (only depending on $C$ and $\phi$, \textit{not} on such a pair $(L,S_L)$) such that 
\[
	h(\phi(P)) \leq B,
\]
where $h$ is the Weil height. In short, as long as the point $\phi(P)$ has few non-integrality places (the exact condition being \eqref{eqRungeconditioncourbes}), there is an absolute bound on the height of $\phi(P)$. There is a very natural justification (due to Bilu) for Bombieri's theorem: let us fix a pair $(L,S_L)$ satisfying Runge condition and $P \in C(L)$ such that $\phi(P) \in \Ocal_{L,S_L}$. For every place $v \in M_L \backslash S_L$, as $|\phi(P)|_v$ is small, it means that $P$ is $v$-adically far from all orbits of poles of $\phi$. For $v \in S_L$, $P$ can be $v$-adically close to one of the  orbits but only one of them because they are pairwise disjoint. We eliminate such an orbit if it exists, and applying the process for every $v \in S_L$, Runge's condition guarantees that there remains at the end of the process one orbit $\Ocal$ which is $v$-far from $P$ for \textit{all} places $v \in M_L$. This in turn implies finiteness : indeed, choosing by Riemann-Roch an auxiliary function $g_\Ocal \in L(C)$ whose poles are the points of $\Ocal$, this means that $h(g_\Ocal(P))$ is small as $P$ is far from its poles at every places, hence $P$ belongs to a finite set by Northcott condition. It is a bit more technical to obtain a bound on the height $h (\phi(P))$ (and which does not depend on $(L,S_L)$) in the general case) but it is the same idea. This justification also provides a method to bound in practice the heights of such points (when one knows well enough the auxiliary functions $g_\Ocal$), which is called \textit{Runge's method}. When applicable, this method has two important assets: it gives good bounds, and it is uniform in the pairs $(L,S_L)$, which for example is not true for Baker's method.

The goal of this paper was to find ways to transpose the ideas for Runge's method on curves to higher-dimensional varieties, where it is generally very difficult to obtain finiteness of integral or rational points, as the extent of our knowledge is much more limited. First, let us recall a previous generalisation of Bombieri's theorem in higher dimensions obtained by Levin (\cite{Levin08}, Theorem 4). To sum it up in a simpler case, on a projective smooth variety $X$, the analogues of poles of $\phi$ are effective divisors $D_1, \cdots, D_r$. We have to fix a smooth integral model $\Xcal$ of $X$ on $\Ocal_K$, and denote by $\Dcal_1, \cdots, \Dcal_r$ the Zariski closures of the divisors in this model, of union $\Dcal$, so our integral points here are the points of $(\Xcal \backslash \Dcal) (\Ocal_{L,S_L})$. There are two major changes in higher dimension. Firstly, the divisors have to be ample (or at least big) to obtain finiteness results (this was automatic for dimension 1). Secondly, instead of the condition $|S_L| < r$ as for curves, the \textit{higher-dimensional Runge condition} is
\begin{equation}
\label{eqintroRungemultidim}
m |S_L|<r,
\end{equation}
where $m$ is the smallest number such that any $(m+1)$ divisors amongst $D_1, \cdots, D_r$ have empty common intersection. Levin's theorem states in particular that when the divisors are ample, 
\[
	\left( \bigcup_{\substack{(L,S_L) \\ m |S_L|< r}} \! \! \left( \Xcal \backslash \Dcal \right) (\Ocal_{L,S_L}) \right) \, \, \, {\textrm{is finite}}.
\]
The issue with \eqref{eqintroRungemultidim} is that the maximal number $|S_L|$ satisfying this condition is much lowered because of $m$, even more as the ample (or big) hypothesis tends to give a lower bound on this $m$. When we tried to apply Levin's theorem to some Siegel modular varieties with chosen divisors, we found that the higher-dimensional Runge condition was too restrictive (remember that $S_L$ contains archimedean places, so $|S_L|\geq [K:\Q]/2$), hence the theorem was not applicable. This was the initial motivation for a generalisation of this theorem, called ``tubular Runge theorem'', designed to be more flexible in terms of Runge condition. Let us explain its principle below.

Additionally to $X$ and $D_1, \cdots, D_r$, we fix a closed subvariety $Y$ of $X$ which is meant to be ``a subvariety of $X$ where the divisors $D_1, \cdots, D_r$ intersect a lot more than outside it''. More precisely, let $m_Y$ the smallest number such that any $(m_Y+1)$ divisors amongst $D_1, \cdots, D_r$ have common intersection included in $Y$. In particular, $m_Y \leq m$, and the goal is to have $m_Y$ as small as possible without asking $Y$ to be too large. Now, we fix a ``tubular neighbourhood'' of $Y$, which is the datum of a family $\Vcal=(V_v)_v$ where $v$ goes through the places $v$ of $\Kb$, every $V_v$ is a neighbourhood of $Y$ in $v$-adic topology, and this family is uniformly not too small in some sense. For example, if $\Ycal$ is the Zariski closure of $Y$ in $\Xcal$, we can define at a finite place $v$ the neighbourhood $V_v$ to be the set of points of $\Xcal(\overline{K_v})$ reducing in $\Ycal$ modulo $v$. We say that a point $P \in X(\Kb)$ does \emph{not} belong to $\Vcal$ if $P \notin V_v$ for every place $v$ of $\Kb$, and intuitively, this means that $P$ is $v$-adically far away from $Y$ for \emph{every} place $v$ of $\Kb$. Now, assume our integral points are not in $\Vcal$. It implies that at most $m_Y$ divisors amongst $D_1, \cdots, D_r$ can be $v$-adically close to them,  hence using the same principles of proof as Levin, this gives the \textit{tubular Runge condition}
\begin{equation}
\label{eqintroRungetub}
	m_Y |S_L| < r.
\end{equation}
With this additional data, one can now give an idea of our tubular Runge theorem.

\begin{thmsansnom}[Simplified version of ``tubular Runge'' (Theorem \ref{thmRungetubulaire})]
\hspace*{\fill}

	For $X,\Xcal,Y,D_1, \cdots,D_r,m_Y$ and a tubular neighbourhood $\Vcal$ of $Y$ as in the paragraph above, let $(\Xcal \backslash \Dcal) (\Ocal_{L,S_L}) \backslash \Vcal$ be the set of points of $(\Xcal \backslash \Dcal)(\Ocal_{L,S_L})$ which do not belong to $\Vcal$. Then, if $D_1, \cdots, D_r$ are ample, for every such tubular neighbourhood, the set 
	\[
		\left( \bigcup_{\substack{(L,S_L) \\ m_Y |S_L|< r}} \! \! \left( \Xcal \backslash \Dcal \right) (\Ocal_{L,S_L}) \backslash \Vcal \right) \, \, \, {\textrm{is finite}},
	\]
	and bounded in terms of some auxiliary height.
\end{thmsansnom}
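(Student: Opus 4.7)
\emph{Setup.} For each divisor $D_i$ and each place $v$ of $\Kb$, fix a local Weil function $\lambda_{D_i,v}$, normalised through a projective embedding of $\Xcal$ so that $\sum_v \lambda_{D_i,v}(P) = h_{D_i}(P) + O(1)$, with the $\lambda_{D_i,v}$ chosen compatibly with the integral model $\Xcal$: for $P \in \Xcal(\Ocal_{L,v})$ and $v$ a finite place, $\lambda_{D_i,v}(P) \leq c_v$ for some $M_L$-constant $(c_v)_v$. Fix also a reference ample height $h$ on $X$. Let now $P \in (\Xcal \backslash \Dcal)(\Ocal_{L,S_L}) \backslash \Vcal$. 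At any $v \in M_L \backslash S_L$, integrality gives $\lambda_{D_i,v}(P) \leq c_v$ for every $i$, so these places contribute a bounded $M_L$-amount to each $h_{D_i}(P)$.

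\emph{Combinatorial step at $S_L$ places.} For $v \in S_L$, I claim that the set
\[
I_v(P) := \{ i \in \{1, \ldots, r\} : \lambda_{D_i,v}(P) > T \}
\]
has cardinality at most $m_Y$, for a threshold $T$ depending only on $X$, $\Xcal$, $\Vcal$ and the $D_i$. Indeed, if $|I_v(P)| \geq m_Y + 1$, an effective Nullstellensatz-type inequality applied to local equations for the $D_i$ on an affine neighbourhood of $P$ shows that $P$ is $v$-adically close to $\bigcap_{i \in I_v(P)} D_i$, a subvariety contained in $Y$ by the very definition of $m_Y$. The uniform ``not too small'' hypothesis on $\Vcal$ then forces $P \in V_v$, contradicting $P \notin \Vcal$.

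\emph{Pigeonhole and conclusion.} The tubular Runge condition $m_Y |S_L| < r$ yields
\[
\left| \bigcup_{v \in S_L} I_v(P) \right| \leq m_Y |S_L| < r,
\]
so some index $i_0$ satisfies $\lambda_{D_{i_0},v}(P) \leq T$ for every $v \in S_L$. Combining with the non-$S_L$ bound and summing over all places gives $h_{D_{i_0}}(P) \leq B$ with $B$ absolute (depending only on $X$, $\Xcal$, $\Vcal$, $D_1, \ldots, D_r$). As $D_{i_0}$ is ample, $h_{D_{i_0}}$ dominates the auxiliary height $h$ up to $O(1)$, so $h(P)$ is bounded independently of $(L,S_L)$. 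Finally, $S_L \supseteq M_L^{\infty}$ implies $[L:\Q] \leq 2|S_L| < 2r/m_Y$, so the degree of $P$ is bounded as well, and Northcott's theorem yields finiteness of the union.

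\emph{Main obstacle.} The crux is the effective Nullstellensatz step: quantifying that $v$-adic closeness to $m_Y + 1$ divisors implies $v$-adic closeness to their common intersection, with constants uniform in $v$ (i.e.\ controlled by $M_L$-constants), and interfacing this with the abstract ``not too small'' condition defining $\Vcal$. Once this bridge between the geometric datum $Y$ and the analytic datum $\Vcal$ is set up cleanly, the rest is essentially bookkeeping parallel to Levin's proof, with $m_Y$ replacing $m$ throughout.
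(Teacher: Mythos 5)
Your proposal is correct and follows essentially the same route as the paper: integrality controls the local contributions away from $S_L$, a Nullstellensatz argument combined with the exclusion $P \notin \Vcal$ limits to $m_Y$ the number of divisors that can be $v$-adically close to $P$ at each $v \in S_L$, and the pigeonhole principle under the tubular Runge condition leaves one ample divisor whose height is bounded at every place, whence finiteness by Northcott. The step you flag as the main obstacle is precisely the paper's key Proposition \ref{propcle}, proved there via the Nullstellensatz with $M_K$-constants playing the role of your place-dependent thresholds $T$.
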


This is a very simplified form of the theorem : one can have $D_1, \cdots, D_r$ defined on a scalar extension of $X$ and big instead of ample, and $\Xcal$ normal for example. The general (and more precise) version is Theorem \ref{thmRungetubulaire}. As the implicit bound on the height is parametered by the tubular neighbourhood $\Vcal$, it can be seen as a \textit{concentration result} rather as a finiteness one : essentially, it states that the points of $(\Xcal \backslash \Dcal) (\Ocal_{L,S_L})$ concentrate near the closed subset $Y$. As such, we have compared it to theorems of \cite{CorvajaLevinZannier}, notably Autissier Theorem and CLZ Theorem, in section \ref{sectiontubularRunge} (in particular, our version is made to be effective, whereas these results are based on Schmidt's subspace theorem, hence theoretically ineffective). 

In the second part of our paper, we applied the method for Siegel modular varieties, both as a proof of principle and because integral points on these varieties are not very well understood, apart from Shafarevich conjecture proved by Faltings. As we will see below, this is also a case where a candidate for $Y$ presents itself, thus giving tubular neighbourhoods a natural interpretation.

For $n \geq 2$, the variety denoted by $A_2(n)$ is the variety over $\Q(\zeta_n)$  parametrising triples $(A,\lambda,\alpha_n)$ with $(A,\lambda)$ is a principally polarised abelian variety of dimension 2 and $\alpha_n$ is a symplectic level $n$ structure on $(A,\lambda)$. It is a quasi-projective algebraic variety of dimension 3, and its Satake compactification (which is a projective algebraic variety) is denoted by $A_2(n)^S$, the boundary being $\partial A_2(n) = A_2(n)^S \backslash A_2(n)$. The extension of scalars $A_2(n)_\C$ is the quotient of the half-superior Siegel space $\Hcal_2$ by the natural action of the symplectic congruence subgroup $\Gamma_2(n)$ of $\Sp_4(\Z)$ made up with the matrices congruent to the identity modulo $n$. Now, we consider some divisors ($n^4/2 +2$ of them) defined by the vanishing of some modular forms, specifically theta functions. One finds that they intersect a lot on the boundary $\partial A_2(n)$ ($m$ comparable to $n^4$), but when we fix $Y=\partial A_2(n)$, we get $m_Y \leq (n^2 - 3)$ hence giving the \textit{tubular Runge condition} 
\[
	(n^2 - 3) |S_L| < \frac{n^4}{2} + 2.
\]

Now, the application of our tubular Runge theorem gives for every even $n \geq 2$ a finiteness result for the integral points for these divisors and some tubular neighbourhoods associated to potentially bad reduction for the finite places : this is Theorem \ref{thmtubularRungegeneral}. In the special case $n=2$, as a demonstration of the effectiveness of the method, we made this result completely explicit in Theorem \ref{thmproduitCEexplicite}. A simplified case of this Theorem is the following result.
\begin{thmsansnom}
	[Theorem \ref{thmproduitCEexplicite}, simplified case]
	
	Let $K$ be either $\Q$ or a quadratic imaginary field.
	
	Let $A$ be a principally polarised abelian surface defined over $K$ as well as all its 2-torsion and having potentially good reduction at all finite places of $K$.
	
	Then, if the semistable reduction of $A$ is a product of elliptic curves at most at 3 finite places of $K$, we have the explicit bound 
	\[
	h_\Fcal(A) \leq 1070,
	\]
	where $h_\Fcal$ is the stable Faltings height. In particular, there are only finitely many such abelian surfaces.
\end{thmsansnom}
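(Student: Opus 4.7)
The plan is to realise $A$ as an integral point of a suitable model of $A_2(2)$ and then invoke the tubular Runge theorem (Theorem \ref{thmRungetubulaire}) with $Y = \partial A_2(2)$. Since $A$ and its full 2-torsion are defined over $K$, a choice of symplectic level 2 structure on $(A,\lambda)$ produces a $K$-point $P$ of $A_2(2)$. The ten theta null modular forms of level $\Gamma_2(2)$ cut out effective divisors $D_1, \ldots, D_{10}$ on the Satake compactification; as recalled in the introduction, their common vanishing locus is contained in $\partial A_2(2)$, giving $m_Y \leq n^2 - 3 = 1$ for $n=2$. On the interior $A_2(2)$, the vanishing of any such theta null precisely picks out decomposable principally polarised abelian surfaces, i.e.\ products of elliptic curves.

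Next I would translate the reduction hypotheses into integrality at the right places. Potentially good reduction of $A$ at a finite place $v$ means, by N\'eron--Ogg--Shafarevich together with the valuative criterion, that $P$ does not specialise into $\partial A_2(2)$ at $v$; hence $P$ avoids the finite-place part of the tubular neighbourhood $\Vcal$. Moreover, if at such a $v$ the semistable reduction is \emph{not} a product of elliptic curves, then $P$ is integral with respect to $\Dcal$ at $v$, because the only zeros of the ten theta nulls modulo $v$ in the interior of $A_2(2)$ are the products. Collecting the at most 3 bad finite places together with the unique archimedean place of $K$ into $S_L$ gives $|S_L| \leq 4$, so the tubular Runge condition $m_Y |S_L| \leq 1 \cdot 4 < 10 = n^4/2 + 2$ is satisfied. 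At the archimedean place, one fixes $\Vcal$ explicitly by prescribing a compact region of a Siegel fundamental domain for $\Gamma_2(2)$ in $\Hcal_2$; by a standard reduction-theoretic argument, $P$ can be represented by a $\tau$ outside the prescribed region, so $P \notin \Vcal$ there as well.

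Applying Theorem \ref{thmRungetubulaire} then produces an explicit bound on the projective height of $P$ relative to the embedding by the ten theta nulls. Since the line bundle spanned by these modular forms is a power of the Hodge line bundle on $A_2(2)$, explicit comparisons between projective heights and the stable Faltings height (in the spirit of Pazuki's work on Faltings heights and theta constants) then convert this into the claimed bound $h_\Fcal(A) \leq 1070$. The hard part is not conceptual but quantitative: the constants issuing from Theorem \ref{thmRungetubulaire} depend on auxiliary global sections of suitable multiples of $\Dcal$ and on their sup-norms along $\Vcal$, on fine archimedean estimates for the theta functions near $\partial A_2(2)$, and on the explicit Hodge/Faltings comparison for abelian surfaces; propagating all of these carefully to reach the concrete numerical value $1070$ is the main labour of the proof.
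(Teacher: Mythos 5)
Your overall strategy (realise $A$ as a $K$-point of $A_2(2)$, use the ten theta-null divisors, note $m_Y=1$ outside the boundary, count bad places, then convert a projective height bound into a Faltings height bound via Pazuki) is the right skeleton, but there are two genuine gaps. The first is the archimedean place. You claim that "$P$ can be represented by a $\tau$ outside the prescribed region, so $P \notin \Vcal$ there as well"; this is false. Whether $P_\sigma$ lies in the archimedean neighbourhood $V(t)$ of $\partial A_2(2)^S_\C$ is an intrinsic property of the point (the fundamental-domain representative is essentially unique), and there are points of $A_2(2)(K)$ whose representative has $\Im\tau_4$ arbitrarily large. Since the statement you are proving imposes \emph{no} archimedean hypothesis, you cannot feed the archimedean place into the tubular Runge theorem with cost $m_Y=1$. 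The paper instead proves an unconditional archimedean estimate (Proposition \ref{proparchimedeanbound}$(a)$): for \emph{any} $\tau$, at most six of the ten theta constants can be smaller than $0.42$ times the largest one. The archimedean place therefore eliminates up to \emph{six} divisors, not one, and the correct counting is the mixed Runge condition of Remark \ref{remRungetubulaire}$(b)$: $6 + 1\cdot s_P < 10$, i.e.\ $s_P<4$ — which is exactly why the hypothesis is "at most 3 finite places" rather than the $|S_L|\le 4 <10$ you wrote (your inequality is numerically satisfiable but rests on the false archimedean claim).

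The second gap is at the places above $2$. The integral model $\Acal_2(2)$ lives over $\Z[1/2]$ and Mumford's algebraic theta theory fails in residue characteristic $2$, so your assertion that "the only zeros of the ten theta nulls modulo $v$ in the interior are the products" cannot be justified this way when $v\mid 2$ — and $K=\Q$ or imaginary quadratic always has such places. The paper handles them by a separate explicit computation: expressing the symmetric functions of the $\Theta_{m/2}^8$ as polynomials in the Igusa modular forms $h_4,h_6,h_{10},h_{12}$, using integrality of quotients of Igusa invariants to control a Newton polygon, and deducing that at a place above $2$ all ten coordinates agree in norm up to an explicit power of $|2|_\gP$ except possibly one (Proposition \ref{propbornesfoncthetaaudessus2}). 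These $2$-adic losses (the $|2|^{21/2}$-type terms) enter the final numerical bound, so they cannot be waved away as part of "propagating constants"; without them neither the height bound $h(\psi(P))\le 10.75$ nor the value $1070$ is reachable.
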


\setcounter{thm}{0}

To conclude this introduction, we explain the structure of the paper, emphasizing where the notions sketched above and proofs are given in detail.

\[
	\xymatrix{
	\ref{sectionnotations} \ar[d] \ar[rrd] & & \\
 	\ref{sectionvoistub} \ar[d] & & \ref{sectionrappelsSiegel} \ar[d] \\
	\ref{sectionresultatscles} \ar[d] \ar[r] & \ref{sectiontubularRunge} \ar[rd] \ar[r]& \ref{sectionapplicationsSiegel} \ar[d] \\
	\ref{sectionRungecourbes} &  & \ref{sectionexplicitRunge}
	}
\]

Section \ref{sectionnotations} is devoted to the notations used throughout the paper, including heights, $M_K$-constants and bounded sets (Definition \ref{defMKconstante}). We advise the reader to pay particular attention to this first section as it introduces notations which are ubiquitous in the rest of the paper. Section \ref{sectionvoistub} is where the exact definition (Definition \ref{defvoistub}) and basic properties of tubular neighbourhoods are given. In section \ref{sectionresultatscles}, we prove the key result for Runge tubular theorem (Proposition \ref{propcle}), essentially relying on a well-applied Nullstellensatz. For our purposes, in Proposition \ref{propreductionamplegros}, we also translate scheme-theoretical integrality in terms of auxiliary functions. In section \ref{sectionRungecourbes}, we reprove Bombieri's theorem for curves (written as Proposition \ref{propBombieri}) with Bilu's idea, as it is not yet published to our knowledge (although this is exactly the principle behind Runge's method in \cite{BiluParent09} for example). To finish with the theoretical part, we prove and discuss our tubular Runge theorem (Theorem \ref{thmRungetubulaire}) in section \ref{sectiontubularRunge}.

For the applications to Siegel modular varieties, section \ref{sectionrappelsSiegel} gathers the necessary notations and reminders on these varieties (subsection \ref{subsecabvarSiegelmodvar}), their integral models with some discussions on the difficulties on dealing with them in dimension at least 2 (subsection \ref{subsecfurtherpropSiegelmodvar}) and the important notion of theta divisors on abelian varieties and their link with classical theta functions (subsection \ref{subsecthetadivabvar}). The theta functions are crucial because the divisors we use in our applications of tubular Runge method are precisely the divisors of zeroes of some of these theta functions.

In section \ref{sectionapplicationsSiegel}, we consider the case of abelian surfaces we are interested in, especially for the behaviour of theta divisors (subsection \ref{subsecthetadivabsur}) and state in subsection \ref{subsectubularRungethmabsur} the applications of Runge tubular theorem for the varieties $A_2(n)^S$ and the divisors mentioned above (Theorems \ref{thmtubularRungeproduitCE} and \ref{thmtubularRungegeneral}).

Finally, in section \ref{sectionexplicitRunge}, we make explicit Theorem \ref{thmtubularRungeproduitCE} by computations on the ten fourth powers of even characteristic theta constants. To do this, the places need to be split in three categories. The finite places not above 2 are treated by the theory of algebraic theta functions in subsection \ref{subsecalgebraicthetafunctions}, the archimedean places by estimates of Fourier expansions in subsection \ref{subsecarchimedeanplaces} and the finite places above 2 (the hardest case) by the theory of Igusa invariants and with polynomials built from our ten theta constants in subsection \ref{subsecplacesabove2}. The final estimates are given as Theorem \ref{thmproduitCEexplicite} in subsection \ref{subsecfinalresultRungeCEexplicite}, both in terms of a given embedding of $A_2(2)$ and in terms of Faltings height.

The main results of this paper have been announced in the recently published note \cite{LeFourn4}, and apart from section \ref{sectionexplicitRunge} and some improvements can be found in the author's thesis manuscript \cite{LeFournthese2} (both in French).
 
\section*{Acknowledgements}
I am very grateful to Fabien Pazuki and Qing Liu for having kindly answered my questions and given me useful bibliographic recommandations on the subject of Igusa invariants.

\begin{tableofcontents}
\end{tableofcontents}

\section{Notations and preliminary notions}
\label{sectionnotations}
The following notations are classical and given below for clarity. They will be used throughout the paper.

\begin{itemize}
\item[$\bullet$] $K$ is a number field.
\item[$\bullet$] $M_K$ (resp. $M_K^\infty$) is the set of places (resp. archimedean places). We also denote by $M_{\Kb}$ the set of places of $\Kb$.
\item[$\bullet$] $|\cdot|_\infty$ is the usual absolute value on $\Q$, and $|\cdot|_p$ is the place associated to $p$ prime, whose absolute value is normalised by 
\[
	|x|_p = p^{-\ord_p (x)},
\]
where $\ord_p (x)$ is the unique integer such that $x = p^{\ord_p(x)} a/b$ with  $p \nmid ab$. By convention, $|0|_p=0$.
\item[$\bullet$] $|\cdot|_v$ is the absolute value on $K$ associated to $v \in M_K$, normalised to extend $|\cdot|_{v_0}$ when $v$ is above $v_0 \in M_\Q$, and the local degree is $n_v = [K_v : \Q_{v_0}]$, so that for every $x \in K^*$, one has sthe product formula 
\[
	\prod_{v \in M_K} |x|_v^{n_v} = 1.
\]
When $v$ comes from a prime ideal $\gp$ of $\Ocal_K$, we indifferently write $|\cdot|_v$ and $|\cdot|_\gp$. 

\item[$\bullet$] For any place $v$ of $K$, one defines the sup norm on $K^{n+1}$ by 
\[
\| (x_0,  \cdots, x_n) \|_v = \max_{0 \leq i \leq n} |x_i|_v.
\]
(this will be used for projective coordinates of points of $\P^n (K)$).

\item[$\bullet$] Every set of places $S \subset M_K$ we consider is finite and contains $M_K^ {\infty}$. We then define the ring of $S$-integers as 
\[
	\Ocal_{K,S} = \{ x \in K \, \,  | \, \, |x|_v \leq 1 \textrm{ for every } v \in M_K \backslash S \},
\]
in particular $\Ocal_{K,M_K^{\infty}} = \Ocal_K$.

\item[$\bullet$] For every $P \in \P^n (K)$, we denote by 
\[
x_P=(x_{P,0}, \cdots, x_{P,n}) \in K^{n+1}
\] 
any possible choice of projective coordinates for $P$, this choice being of course fixed for consistency when used in a formula or a proof.

\item[$\bullet$]  The logarithmic Weil height of $P \in \P^n(K)$ is defined by
\begin{equation}
\label{eqdefinitionhauteurdeWeil}
	h(P) = \frac{1}{[K : \Q]}\sum_{v \in M_K} n_v \log \| x_P \|_v,
\end{equation}
does not depend on the choice of $x_P$ nor on the number field, and satisfies Northcott property.

\item[$\bullet$] For every $n \geq 1$ and every $i \in \{0, \cdots,n\}$, the $i$-th coordinate open subset $U_i$ of $\P^n$ is the affine subset defined as 
\begin{equation}
\label{eqdefUi}
	U_i = \{ (x_0 : \cdots : x_n) \, \, | \, \, x_i \neq 0 \}.
\end{equation}
The normalisation function $\varphi_i : U_i \rightarrow \A^{n+1}$ is then defined by 
\begin{equation}
\label{eqdefvarphii}
	\varphi_i (x_0 : \cdots : x_n) = \left(\frac{x_0}{x_i}, \cdots, 1, \cdots \frac{x_n}{x_i} \right).
\end{equation}
Equivalently, it means that to $P \in U_i$, we associate the choice of $x_P$ whose $i$-th coordinate is 1. 
\end{itemize}

For most of our results, we need to formalize the notion that some families of sets indexed by the places $v \in M_K$ are ``uniformly bounded''. To this end, we recall some classical definitions (see \cite{BombieriGubler}, section 2.6).

\begin{defi}[$M_K$-constants and $M_K$-bounded sets]
\hspace*{\fill}
\label{defMKconstante}

\begin{itemize}
	\item[$\bullet$] An \textit{$M_K$-constant} is a family $\Ccal = (c_v)_{v \in M_K}$ of real numbers such that $c_v=0$ except for a finite number of places $v \in M_K$.
The $M_K$-constants make up a cone of $\R^{M_K}$, stable by finite sum and maximum on each coordinate.

\item[$\bullet$] Let $L/K$ be a finite extension. For an $M_K$-constant $(c_v)_{v \in M_K}$, we define (with abuse of notation) an $M_L$-constant  $(c_w)_{w \in M_L}$ by $c_w : = c_v$ if $w|v$. Conversely, if $(c_w)_{w \in M_L}$ is an $M_L$-constant, we define (again with abuse of notation) $(c_v)_{v \in M_K}$ by $c_v := \max_{w|v} c_w$, and get in both cases the inequality
\begin{equation}
\label{eqineqinductionMKconstante}
	\frac{1}{[L : \Q]} \sum_{w \in M_L} n_w c_w \leq \frac{1}{[K: \Q]} \sum_{v \in M_K} n_v c_v.
\end{equation}

\item[$\bullet$] If $U$ is an affine variety over $K$ and $E \subset U(\Kb) \times M_{\Kb}$, a regular function $f \in \Kb[U]$ is \textit{$M_K$-bounded on $E$} if there is a $M_K$-constant $\Ccal = (c_v)_{v \in M_K}$ such that for every $(P,w) \in E$ with $w$ above $v$ in $M_K$,
\[
	\log |f(P)|_w \leq c_v.
\]
\item[$\bullet$] 
An \textit{$M_K$-bounded subset of $U$} is, by abuse of definition, a subset $E$ of $U(\Kb) \times M_{\Kb}$ such that every regular function $f \in \Kb[U]$ is $M_K$-bounded on $E$.
\end{itemize}
\end{defi}

\begin{rem}
\label{remdefMKconstantes}

There are fundamental examples to keep in mind when using these definitions: 

$(a)$ For every $x \in K^*$, the family $(\log |x|_v)_{v \in M_K}$ is an $M_K$-constant.

$(b)$ In the projective space $\P^n_K$, for every $i \in \{ 0 , \cdots, n\}$, consider the set 
\begin{equation}
\label{eqdefEi}
	E_i = \{ (P,w) \in \P^n(\Kb) \times M_{\Kb} \, \,  | \, \, |x_{P,i}|_w = \|x_P\|_w \}.
\end{equation}
The regular functions $x_j/x_i$ ($j \neq i$) on $\Kb[U_i]$ (notation \eqref{eqdefUi}) are trivially $M_K$-bounded (by the zero $M_K$-constant) on $E_i$, hence $E_i$ is $M_K$-bounded in $U_i$. Notice that the $E_i$ cover $\P^n (\Kb) \times M_{\Kb}$. We will also consider this set place by place, by defining for every $w \in M_{\Kb}$ :
\begin{equation}
	\label{eqdefEiw}
	E_{i,w} = \{ P \in \P^n(\Kb) \, \, |\, \, |x_{P,i}|_w = \|x_P\|_w \}.
\end{equation}

$(c)$ With notations \eqref{eqdefinitionhauteurdeWeil}, \eqref{eqdefUi} and \eqref{eqdefvarphii}, for a subset $E$ of $U_i(\Kb)$, if the coordinate functions of $U_i$ are $M_K$-bounded on $E \times M_{\Kb}$, the height $h \circ \varphi_i$ is straightforwardly bounded on $E$ in terms of the involved $M_K$-constants. This simple observation will be the basis of our finiteness arguments. 
\end{rem}

The following lemma is useful to split $M_K$-bounded sets in an affine cover.

\begin{lem}
\label{lemMKbornerecouvrement}
	Let $U$ be an affine variety and $E$ an $M_K$-bounded set. If $(U_j)_{j \in J}$ is a finite affine open cover of $U$, there exists a cover $(E_j)_{j \in J}$ of $E$ such that every $E_j$ is $M_K$-bounded in $U_j$.
\end{lem}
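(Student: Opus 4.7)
The plan is to refine $(U_j)_{j \in J}$ into a cover of $U$ by principal affine open subsets $D(g_k) \subset U_{j(k)}$ and then use a partition-of-unity identity of the form $\sum_k h_k g_k = 1$ in $\Kb[U]$ to split $E$ according to which $|g_k|_w$ is ``not too small''. First I would reduce to the principal case: since each $U_j$ is a quasi-compact open of the affine variety $U$, it can be covered by principal opens $D(g) = \{g \neq 0\}$ with $g \in \Kb[U]$ satisfying $D(g) \subset U_j$; by finiteness I extract $g_1, \dots, g_n \in \Kb[U]$ and an assignment $k \mapsto j(k) \in J$ with $D(g_k) \subset U_{j(k)}$ and $U = \bigcup_k D(g_k)$. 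This last covering property means $(g_1, \dots, g_n) = \Kb[U]$, so there exist $h_1, \dots, h_n \in \Kb[U]$ with $\sum_k h_k g_k = 1$; since $E$ is $M_K$-bounded in $U$, each $h_k$ in particular admits an $M_K$-constant bounding $\log|h_k|_w$ on $E$.

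Next I would evaluate the identity $\sum_k h_k(P) g_k(P) = 1$ at any $(P, w) \in E$: at least one index $k$ must satisfy $|h_k(P) g_k(P)|_w \geq 1/n$. Setting
\[
E_k := \{(P,w) \in E \,:\, |h_k(P) g_k(P)|_w \geq 1/n\},
\]
one then has $E = \bigcup_k E_k$ and $E_k \subset D(g_k)(\Kb) \times M_{\Kb} \subset U_{j(k)}(\Kb) \times M_{\Kb}$. The inequality $|g_k(P)|_w^{-1} \leq n |h_k(P)|_w$ on $E_k$, combined with the $M_K$-boundedness of $h_k$, then shows that $\log |g_k|_w^{-1}$ is $M_K$-bounded on $E_k$; this is the crucial estimate.

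To finish, I verify that $E_k$ is $M_K$-bounded in $U_{j(k)}$: any $f \in \Kb[U_{j(k)}]$ restricts to $D(g_k)$ as an element of $\Kb[U][1/g_k]$, hence is of the form $F/g_k^N$ for some $F \in \Kb[U]$ and integer $N \geq 0$, and both $|F|_w$ and $|g_k|_w^{-N}$ being $M_K$-bounded on $E_k$ yields the same bound for $f$. Regrouping, $\tilde E_j := \bigcup_{k \,:\, j(k) = j} E_k$ for each $j \in J$ produces the cover of $E$ demanded by the lemma. The main obstacle is really just the algebraic identification of $f|_{D(g_k)}$ as $F/g_k^N$ with $F$ defined on all of $U$, which is a standard consequence of $U$ being affine and $D(g_k)$ being a principal open inside it; everything else is a direct consequence of the partition-of-unity trick.
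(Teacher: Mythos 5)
Your proof is correct. The paper does not reprove this lemma but simply cites Lemma 2.2.10 and Remark 2.6.12 of Bombieri--Gubler, and your argument — refining to principal opens $D(g_k)$, using the partition of unity $\sum_k h_k g_k = 1$ to force $\log |g_k|_w^{-1}$ to be $M_K$-bounded on each piece $E_k$, and then expressing regular functions on $U_{j(k)}$ as $F/g_k^N$ — is essentially the standard proof found in that reference.
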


\begin{proof}
This is Lemma 2.2.10 together with Remark 2.6.12 of \cite{BombieriGubler}.
\end{proof}

Let us now recall some notions about integral points on schemes and varieties.

For a finite extension $L$ of $K$, a point $P \in \P^n(L)$ and a nonzero prime ideal $\gP$ of $\Ocal_L$ of residue field $k(\gP) = \Ocal_L/\gP$, the point $P$ extends to a unique morphism $\Spec \Ocal_{L,\gP} \rightarrow \P^n_{\Ocal_K}$, and the image of its special point is \textit{the reduction of $P$ modulo $\gP$}, denoted by $P_\gP \in \P^n (k(\gP))$. It is explicitly defined as follows : after normalisation of the coordinates $x_P$ of $P$ so that they all belong to $\Ocal_{L,\gP}$ and one of them to $\Ocal_{L,\gP}^*$, one has 
\begin{equation}
\label{eqreddansPn}
	P_\gP = (x_{P,0} \! \! \mod \gP  : \cdots : x_{P,n} \! \! \mod \gP) \in \P^n_{k(\gP)}.
\end{equation}

The following (easy) proposition expresses scheme-theoretic reduction in terms of functions (there will be another in Proposition \ref{propreductionamplegros}). We write it below as it is the inspiratoin behind the notion of tubular neighbourhood in section \ref{sectionvoistub}.

\begin{prop}
\label{proplienreductionpointssvaluation}
Let $S$ be a finite set of places of $K$ containing $M_K^\infty$, and $\Xcal$ be a projective scheme on $\Ocal_{K,S}$, seen as a closed subscheme of $\P^n_{\Ocal_{K,S}}$.

Let $\Ycal$ be a closed sub-$\Ocal_{K,S}$-scheme of $\Xcal$. 
	
	Consider $g_1, \cdots, g_s \in \Ocal_{K,S} [X_0, \cdots, X_n]$ homogeneous generators of the ideal of definition of $\Ycal$ in $\P^n_{\Ocal_{K,S_0}}$.  For every nonzero prime $\gP$ of $\Ocal_L$ not above $S$, every point $P  \in \Xcal(L)$, the reduction $P_\gP$ belongs to  $\Ycal_\gp (k(\gP))$ (with $\gp = \gP \cap \Ocal_K$) if and only if
	\begin{equation}
	\label{eqecplicitereduction2}
	\forall j \in \{1, \cdots, s \}, \quad  
	|g_j (x_P)|_\gP < \|x_P\|_\gP^{\deg g_j}.
	\end{equation}
\end{prop}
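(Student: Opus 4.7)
My plan is to observe first that inequality \eqref{eqecplicitereduction2} is intrinsic to $P$, i.e., independent of the chosen projective coordinates $x_P$. Indeed, if we replace $x_P$ by $\lambda x_P$ for some $\lambda \in L^*$, then by homogeneity $g_j(\lambda x_P) = \lambda^{\deg g_j} g_j(x_P)$, while $\|\lambda x_P\|_\gP = |\lambda|_\gP \|x_P\|_\gP$, so both sides of \eqref{eqecplicitereduction2} are multiplied by $|\lambda|_\gP^{\deg g_j}$.

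Next I would reduce to a convenient normalisation: since $\gP$ is a finite place, I choose projective coordinates $x_P = (x_{P,0}, \ldots, x_{P,n}) \in \Ocal_{L,\gP}^{n+1}$ with at least one coordinate a unit in $\Ocal_{L,\gP}$, so that $\|x_P\|_\gP = 1$. With this normalisation, formula \eqref{eqreddansPn} gives $P_\gP = (\overline{x_{P,0}} : \cdots : \overline{x_{P,n}})$, where bars denote reduction modulo $\gP$. Condition \eqref{eqecplicitereduction2} then becomes $|g_j(x_P)|_\gP < 1$ for each $j$, which is exactly saying that $g_j(x_P) \in \gP$, or equivalently that $g_j(\overline{x_P}) = 0$ in $k(\gP)$.

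It remains to link this vanishing to the membership $P_\gP \in \Ycal_\gp(k(\gP))$. Since $g_1, \ldots, g_s$ are homogeneous generators of the ideal of definition of $\Ycal$ in $\P^n_{\Ocal_{K,S}}$, their reductions $\overline{g}_1, \ldots, \overline{g}_s$ modulo $\gp$ generate the ideal of $\Ycal_\gp$ in $\P^n_{k(\gp)}$ (this is where the hypothesis that $\gP$ does not lie above $S$ ensures that $g_j$ and its reduction are well-defined and that $\Ycal_\gp$ makes sense as the scheme-theoretic fibre). Therefore $P_\gP \in \Ycal_\gp(k(\gP))$ if and only if $\overline{g}_j(\overline{x_P}) = 0$ for all $j$, which by the previous step is exactly \eqref{eqecplicitereduction2}. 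I expect no serious obstacle here; the only thing to be careful about is the homogeneity bookkeeping and the fact that generators of the ideal of $\Ycal$ reduce to generators of the ideal of $\Ycal_\gp$, which is automatic from the definition of the fibre as a base change.
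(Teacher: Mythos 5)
Your proof is correct and follows essentially the same route as the paper: normalise the coordinates so that they lie in $\Ocal_{L,\gP}$ with one of them a unit (which by homogeneity turns \eqref{eqecplicitereduction2} into the vanishing of the $g_j$ modulo $\gP$), then observe that the reductions of the $g_j$ define $\Ycal_\gp$ so that this vanishing is exactly membership of $P_\gP$ in $\Ycal_\gp(k(\gP))$. The only addition over the paper's argument is your explicit check that the inequality is independent of the choice of coordinates, which is a harmless (and welcome) piece of bookkeeping.
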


%

\begin{proof}
	For every $j \in \{1, \cdots, s\}$, by homogeneity of $g_j$, for a choice $x_P$ of coordinates for $P$ belonging to $\Ocal_{L,\gP}$ with one of them in $\Ocal_{L,\gP}^*$, the inequality \eqref{eqecplicitereduction2} amounts to 
	\[
	g_j(x_{P,0}, \cdots, x_{P,n}) = 0 \mod \gP
	\] .
	On another hand, the reduction of $P$ modulo $\gP$ belongs to $\Ycal_\gp (\overline{k(\gP)})$ if and only if its coordinates satisfy the equations defining $\Ycal_\gp$ in $X_\gp$, but these are exactly the equations $g_1, \cdots, g_s$ modulo $\gp$. This remark immediately gives the Proposition by \eqref{eqreddansPn}.
\end{proof}

\section{Definition and properties of tubular neighbourhoods}
\label{sectionvoistub}

The explicit expression  \eqref{eqecplicitereduction2} is the motivation for our definition of \textit{tubular neighbourhood}, at the core of our results. 
This definition is meant to be used by exclusion : with the same notations as Proposition \ref{proplienreductionpointssvaluation}, we want to say that a point $P \in X(L)$ is \textit{not} in some
tubular neighbourhood of $\Ycal$ if it \textit{never} reduces in $\Ycal$, whatever the prime ideal $\gP$ of $\Ocal_L$ is.

The main interest of this notion is that it provides us with a convenient alternative to this assumption for the places in $S$ (which are the places where the reduction is not well-defined, including the archimedean places), and also allows us to loosen up this reduction hypothesis in a nice fashion. Moreover, as the definition is function-theoretic, we only need to consider the varieties over a base field, keeping in mind that Proposition \ref{proplienreductionpointssvaluation} above makes the link with reduction at finite places.

\begin{defi}[Tubular neighbourhood]
\label{defvoistub}
\hspace*{\fill}

Let $X$ be a projective variety over $K$ and $Y$ be a closed $K$-subscheme of $X$.

 We choose an embedding $X \subset \P^n_K$, a set of homogeneous generators $g_1, \cdots, g_s$ in $K[X_0, \cdots, X_n]$ of the homogeneous ideal defining $Y$ in $\P^n$ and an $M_K$-constant $\Ccal = (c_v)_{v \in M_K}$.
 
 The \textit{tubular neighbourhood of $Y$ in $X$ associated to $\Ccal$ and $g_1, \cdots, g_s$} (the embedding made implicit) is the family 
 ${\Vcal = (V_w)_{w \in M_{\Kb}}}$ of subsets of $X(\Kb)$ defined as follows.

For every $w \in M_{\Kb}$ above some $v \in M_K$, $V_w$ is the set of points $P \in X(\Kb)$ such that
\begin{equation}
\label{eqdefvoistub}
	\forall j \in \{1, \cdots,s\}, \quad \log |g_j(x_P)|_w < \deg (g_j) \cdot \log \|x_P \|_w + c_v.
\end{equation}

\end{defi}

As we said before, this definition will be ultimately used by exclusion:

\begin{defi}
\label{defhorsdunvoistub}
\hspace*{\fill}

Let $X$ be a projective variety over $K$ and $Y$ be a closed $K$-subscheme of $X$.

For any tubular neighbourhood $\Vcal = (V_w)_{w \in M_{\Kb}}$ of $Y$, we say that a point $P \in X(\Kb)$ \textit{ does not belong to }$\Vcal$ (and we denote it by $P \notin \Vcal$) if 
\[
\forall w \in M_{\Kb}, \quad P \notin V_w.
\]
\end{defi}

\begin{rem}
\hspace*{\fill}
\label{remhorsvoistub}

$(a)$ Comparing \eqref{eqecplicitereduction2} and \eqref{eqdefvoistub}, it is obvious that for the $M_K$-constant $\Ccal=0$ and with the notations of Proposition \ref{proplienreductionpointssvaluation}, at the finite places $w$ not above $S$, the tubular neighbourhood $V_w$ is exactly the set of points $P \in X(\Kb)$ reducing in $\Ycal$ modulo $w$. Furthermore, instead of dealing with any homogeneous coordinates, one can if desired manipulate normalised coordinates, which makes the term $\deg(g_j) \log \|x_P\|_v$ disappear. Actually, we will do it multiple times in the proofs later, as it amounts to covering $\P^n_{\Kb}$ by the bounded sets $E_i$ (notation \eqref{eqdefEi}) and thus allows to consider affine subvarieties when needed.

$(b)$ In a topology, a set containing a neighbourhood is one as well : here, we will define everything by being out of a tubular neighbourhood, therefore allowing sets too large would be too restrictive. One can think about this definition as a family of neighbourhoods being one by one not too large but not too small, and uniformly so in the places.

$(c)$ If $Y$ is an ample divisor of $X$ and $\Vcal$ is a tubular neighbourhood of $Y$, one easily sees that if $P \notin \Vcal$ then $h(\psi(P))$ is bounded for some embedding $\psi$ associated to $Y$, from which we get the finiteness of the set of points $P$ of bounded degree outside of $\Vcal$. This illustrates why such an assumption is only really relevant when $Y$ is of small dimension.

$(d)$ A tubular neighbourhood of $Y$ can also be seen as a family of open subsets defined by bounding strictly a global arithmetic distance function to $Y$ (see \cite{Vojtadiophapp}, paragraph 2.5).
\end{rem}

\begin{exe}
We have drawn below three different pictures of tubular neighbourhoods at the usual archimedean norm. One consider $\P^2(\R)$ with coordinates $x,y,z$, the affine open subset $U_z$ defined by $z \neq 0$, and $E_x,E_y,E_z$ the respective sets such that $|x|,|y|,|z| = \max (|x|,|y|,|z|)$. These different tubular neighbourhoods are drawn in $U_z$, and the contribution of the different parts $E_x$, $E_y$ and $E_z$ is made clear.
\begin{figure}[H]
\centering
\resizebox{8cm}{8cm}
{
      \begin{tikzpicture}
	\draw[->] (-1,0) -- (9,0);
	\draw[->] (0,-1) -- (0,9);
	\fill[color=gray!20]
	 (0,6) -- (2,2)
	 -- (2,2) -- (6,0)
	 -- (6,0) -- (6,6)
	 -- (6,6) -- (0,6)
	 -- cycle;
	\draw (2,2) node[below left]{${(2,2)}$};
	\draw (2,2) node {${ \bullet}$};
	\draw (0,6) node[left]{$ (0,6)$};
	\draw (0,6) node {${\bullet}$};
	\draw (6,0) node[above right]{${ (6,0)}$};
	\draw (6,0) node {${ \bullet}$};
	\draw (6,6) node[right]{${ (6,6)}$};
	\draw (6,6) node {${ \bullet}$};
	\draw (3,3) node[right]{${\displaystyle P}$};
	\draw (3,3) node {$\bullet$};
	\draw[ultra thin] (2,2) -- (6,6);
	\draw (11/4,5) node[below] {${ E_y}$};
	\draw (5,5/2) node[below] {${ E_x}$};
    \end{tikzpicture}
}
\caption{Tubular neighbourhood of the point $P = (3:3:1)$ associated to the inequality 
$\max (|x-3y,y-3z|) < \frac{1}{2} \max(|x|,|y|,|z|).$
}
	\end{figure}
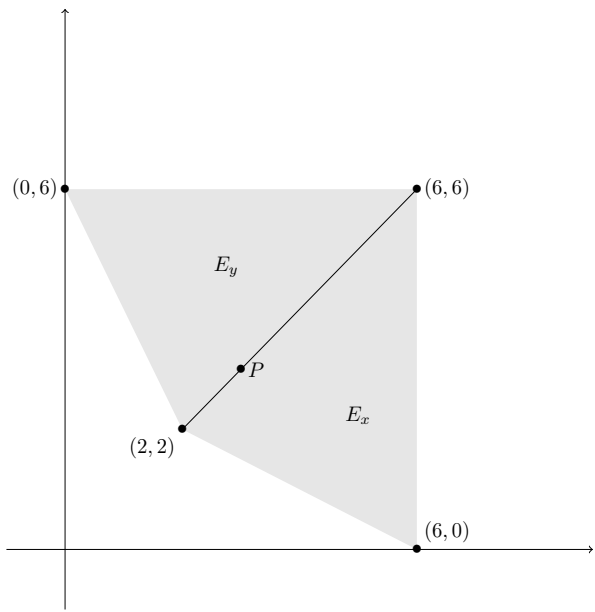
	
\begin{figure}[H]
	\centering 
\resizebox{8cm}{8cm}
{
	\begin{tikzpicture}
	\draw[->, very thin] (-6,0) -- (6,0);
	\draw[->, very thin] (0,-6) -- (0,6);
	\fill[color=gray!20, opacity=0.8]
	 (-1/2,1) -- (4,4)
	 -- (4,4) -- (6,6)
	 -- (6,6) -- (1,6)
	 -- (1,6) -- (-4/3,4/3)
	 (-4/3,4/3) -- (-6,-1)
	 -- (-6,-1) -- (-6,-6)
	 -- (-6,-6) -- (-4,-4)
	 -- (-4,-4) -- (-1,1/2)
	 -- (-1,1/2) -- (-1/2,1)
	 -- cycle;
	 \draw[thick] (-6,-4) -- (4,6);
	 \draw (-4,-4) node {${ \bullet}$};
	 \draw (-4,-4) node[below right] {${ \scriptstyle (-4,-4)}$};
	 \draw (-4/3,4/3) node[above left] {${\scriptstyle (-4/3,4/3)}$};
	  \draw (4,4) node[right] {${ \scriptstyle (4,4)}$};
	 \draw (-4/3,4/3) node {${ \bullet}$};
	 \draw (4,4) node {${ \bullet}$};
	 \draw (-1,1/2) node {${ \bullet}$};
	 \draw (-1,1/2) node[below] {${\scriptstyle (-1,1/2)}$};
	  \draw (-1/2,1) node {${ \bullet}$};
	  \draw (-1/2,1) node[right] {${\scriptstyle (-1/2,1)}$};
	  \draw (2,5) node {\textbf{\textit{L}}};
	  \draw (-3/4,3/4) node {${ E_z}$};
	  \draw[ultra thin] (-1,1) -- (-1/2,1);
	  \draw[ultra thin] (-1,1/2) -- (-1,1);
	  \draw[ultra thin] (-1,1) -- (-4/3,4/3);
	  \draw (1,4) node {${ E_y}$};
	  \draw (3,4) node {${ E_y}$};
	  \draw (-4,-1) node {${ E_x}$};
	  \draw (-4,-3) node {${E_x}$};  
	 \end{tikzpicture}
}	
\caption{Tubular neighbourhood of the line $D : y - x + 2z = 0$ associated to the inequality 
$\max (|x-y+2z|) < \frac{1}{2} \max(|x|,|y|,|z|)$.} 
The boundary of the neighbourhood is made up with segments between the indicated points
\end{figure}
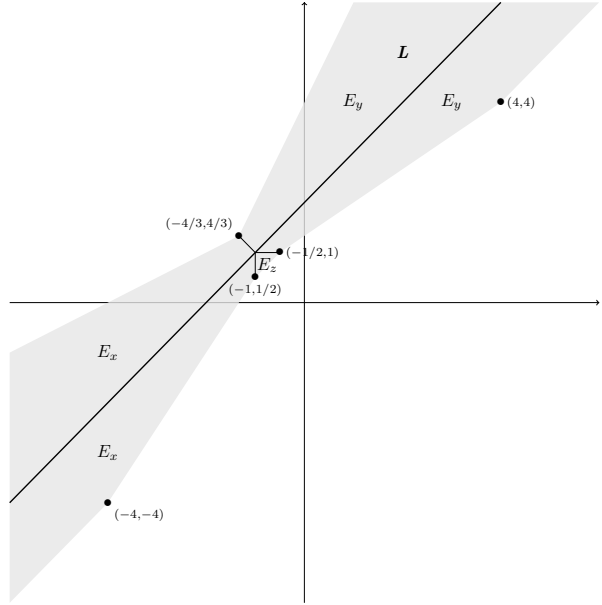

\begin{figure}[H]
\centering
\resizebox{8cm}{8cm}
{
  \begin{tikzpicture}
	\draw[dashed, ultra thin] (-3,-6) -- (3,6);
	\draw[dashed, ultra thin] (-3,6) -- (3,-6);
	\draw[dashed, ultra thin] (-6,-3) -- (6,3);
	\draw[dashed, ultra thin] (-6,3) -- (6,-3);
	 \draw[->, very thin] (-6,0) -- (6,0);
	\draw[->, very thin] (0,-6) -- (0,6);
	\draw[domain=7/6:3/2] plot (\x, {-\x + sqrt(\x +3)});
	\fill[color=gray!30, opacity=0.8]
	plot[domain=-17/6:1/2] (\x, {-\x + sqrt(\x*\x +2)})
	-- plot[domain=1/2:1] (\x,{1/(2*\x)})
	-- plot[domain=1:6] (\x, {-\x/2 + 1/\x})
	-- (6,-5/2) -- (6,7/2)
	-- plot [domain=6:sqrt(2)] (\x,{\x/2 + 1/\x})
	-- plot [domain=sqrt(2):19/6] (\x,{\x + sqrt(\x*\x - 2)})
	-- (19/6,6) -- (-17/6,6)
	-- cycle;
	\fill[color=gray!30, opacity=0.8]
	plot[domain=-6:-1] (\x, {-\x/2 + 1/\x})
	-- plot[domain=-1:-1/2] (\x,{1/(2*\x)})
	-- plot[domain=-1/2:17/6] (\x, {-\x - sqrt(\x*\x +2)})
	-- (17/6,-6) -- (-19/6,-6)
	-- plot [domain=-19/6:{-sqrt(2)}] (\x,{\x - sqrt(\x*\x - 2)})
	-- plot [domain={-sqrt(2)}:-6] (\x,{\x/2 + 1/\x})
	-- (-6,-19/6) -- (-6,17/6)
	-- cycle;
	\draw[domain=1/6:6,samples=300, thick]  plot (\x,{1/\x}); 
	\draw[domain=-6:-1/6,samples=300, thick]  plot (\x,{1/\x}); 
	\draw (1,5) node {\textit{\textbf{H}}};
	\draw[thin] (1/2,1) -- (1,1);
	\draw[thin] (1,1) -- (1,1/2);
	\draw[thin] (-1,-1) -- (-1,-1/2);
	\draw[thin] (-1,-1) -- (-1/2,-1);
	\draw (-3/4,-3/4) node {${ E_z}$};
	\draw (3/4,3/4) node {${ E_z}$};
	\draw[thin] (1,1) -- ({sqrt(2)},{sqrt(2)});
	\draw[thin] (-1,-1) -- ({-sqrt(2)},{-sqrt(2)});
	\draw (-7/2,1/2) node {${ E_x}$};
	\draw (-7/2,-1) node {${ E_x}$};
	\draw (-1,-4) node {${ E_y}$};
	\draw (1,-4) node {${ E_y}$};
	\draw (1,4) node {${ E_y}$};
	\draw (-1,4) node {${ E_y}$};
	\draw (7/2,-1/2) node {${ E_x}$};
	\draw (7/2,1) node {${ E_x}$};
	\draw ({-sqrt(2)},{-sqrt(2)}) node {$\bullet$};
	\draw ({-sqrt(2)},{-sqrt(2)}) node[below left] {${\scriptscriptstyle - (\sqrt{2},\sqrt{2})}$};
	\draw ({sqrt(2)},{sqrt(2)}) node {$\bullet$};
	\draw ({sqrt(2)},{sqrt(2)}) node[above right] {${\scriptscriptstyle (\sqrt{2},\sqrt{2})}$};
	\draw (1/2,1) node {$\bullet$};
	\draw (1/2,1) node[below left] {${\scriptscriptstyle (1/2,1)}$};
	\draw (1,1/2) node {$\bullet$};
	\draw (1,1/2) node[below] {${\scriptscriptstyle (1,1/2)}$};
	\draw (-1/2,-1) node {$\bullet$};
	\draw (-1/2,-1) node[above right] {${\scriptscriptstyle (-1/2,-1)}$};
	\draw (-1,-1/2) node {$\bullet$};
	\draw (-1,-1/2) node[above] {${\scriptscriptstyle (-1,-1/2)}$};
    \end{tikzpicture}
}
\caption{Tubular neighbourhood of the hyperbola $H : xy - z^2 = 0$ given by the inequality 
$|xy-z^2| < \frac{1}{2} \max (|x|,|y|,|z|)$.} The boundary is made up with arcs of hyperbola between the indicated points.
\end{figure}
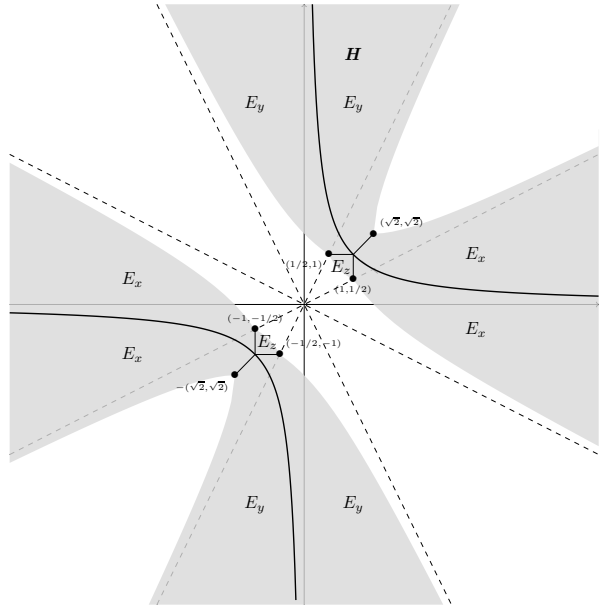
	 
\end{exe}

The notion of tubular neighbourhood does not seem very intrinsic, but as the proposition below shows, it actually is.

\begin{prop}[Characterisation of tubular neighbourhoods]
\label{propcarvoistub}

Let $X$ be a projective variety over $K$ and $Y$ a closed $K$-subscheme of $X$. 

A family $\Vcal=(V_w)_{w \in M_{\Kb}}$ is included in a tubular neighbourhood of $Y$ in $X$  if and only if for every affine open subset $U$ of $X$, every $E \subset U(\Kb) \times M_{\Kb}$ which is $M_K$-bounded in $U$, and every regular function $f \in \Kb[U]$ such that $f_{|Y \cap U} = 0$, there is an $M_K$-constant $\Ccal$ such that 
\[
	\forall (P,w) \in E, \quad P \in V_w \Rightarrow \log |f(P)|_w < c_v
\]
(intuitively, this means that every function vanishing on $Y$ is ``$M_K$-small'' on $\Vcal$).
	
\end{prop}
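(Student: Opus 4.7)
The plan is to prove both implications by reducing to affine charts of the ambient $\P^n$ and invoking Hilbert's Nullstellensatz, with careful bookkeeping of $M_K$-constants across an affine cover and through the Nullstellensatz exponent.

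\emph{Forward direction.} Suppose $\Vcal$ lies in a tubular neighborhood of $Y$ associated to homogeneous generators $g_1, \ldots, g_s$ and an $M_K$-constant $\Ccal = (c_v)$. Fix $U$, $E$ and $f$ as in the statement. I would first apply Lemma \ref{lemMKbornerecouvrement} to a finite affine refinement of the open cover $(U \cap U_i)_{i=0}^n$ of $U$, splitting $E$ into finitely many $M_K$-bounded pieces $E^{(\alpha)}$, each contained in some $U \cap U_{i(\alpha)}$. On such a chart the dehomogenized functions $\tilde g_j := g_j/x_{i(\alpha)}^{\deg g_j}$ are regular and cut out $Y \cap U \cap U_{i(\alpha)}$ set-theoretically, so since $f$ vanishes on $Y \cap U$, Hilbert's Nullstellensatz over $\Kb$ yields an integer $N \geq 1$ and regular functions $h_j \in \Kb[U \cap U_{i(\alpha)}]$ with $f^N = \sum_j h_j \tilde g_j$. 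The assumption $P \in V_w$ combined with the defining inequality \eqref{eqdefvoistub}, and with the $M_K$-boundedness of $\|x_P\|_w/|x_{P,i(\alpha)}|_w = \max_j |(x_j/x_{i(\alpha)})(P)|_w$ on $E^{(\alpha)}$, gives an $M_K$-bound on $\log|\tilde g_j(P)|_w$; as each $h_j$ is itself $M_K$-bounded on $E^{(\alpha)}$, expanding $f^N = \sum_j h_j \tilde g_j$ and extracting an $N$-th root produces the required $M_K$-bound on $\log|f(P)|_w$.

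\emph{Reverse direction.} I fix once and for all homogeneous generators $g_1, \ldots, g_s$ of the ideal defining $Y$ in $\P^n_K$, and apply the hypothesis to the regular functions $\tilde g_j := g_j/x_i^{\deg g_j}$ on the affine open $X \cap U_i$ together with the $M_K$-bounded set $E_i \cap (X \cap U_i)(\Kb) \times M_{\Kb}$ of \eqref{eqdefEi}. Since these $\tilde g_j$ vanish on $Y \cap X \cap U_i$, the hypothesis yields $M_K$-constants $(c_v^{(i,j)})$ such that, for every $(P,w) \in E_i$ with $P \in V_w$, $\log|\tilde g_j(P)|_w < c_v^{(i,j)}$. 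Setting $c_v := \max_{i,j} c_v^{(i,j)}$ (still an $M_K$-constant) and using that $|x_{P,i}|_w = \|x_P\|_w$ on $E_{i,w}$ by \eqref{eqdefEiw}, this bound translates verbatim into \eqref{eqdefvoistub} with constant $(c_v)$. Since the sets $E_{i,w}$ cover $\P^n(\Kb)$ at every place $w$, every $(P,w)$ with $P \in V_w$ falls in some $E_{i,w}$ and hence satisfies the defining inequality, so $\Vcal$ is included in the tubular neighborhood of $Y$ associated to $(g_j)$ and $(c_v)$.

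The main technical obstacle is the forward direction, specifically verifying that the Nullstellensatz identities on each chart assemble into a single uniform $M_K$-estimate on $E$; the rescaling by $1/N$ coming from the $N$-th root leaves the class of $M_K$-constants invariant, and the finiteness of the affine refinement ensures that a maximum over charts remains an $M_K$-constant. The reverse direction is essentially a direct translation, the only subtlety being the choice of the $M_K$-bounded sets $E_i$, which is precisely what makes the homogeneous inequality \eqref{eqdefvoistub} equivalent to the affine one place by place.
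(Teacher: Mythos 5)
Your proof is correct and follows essentially the same route as the paper's: the reverse direction is verbatim the paper's argument (apply the hypothesis to the $g_j\circ\varphi_i$ on the charts $X\cap U_i$ with the $M_K$-bounded sets $E_i$), and the forward direction likewise reduces to an $M_K$-bounded affine cover via Lemma \ref{lemMKbornerecouvrement} and expresses $f$ in terms of the generators of the ideal of $Y$. The only difference is organizational: the paper covers $U$ by principal opens $U_h$ and writes $f_{|U_h}=\sum_j a_{j,h}g_j/h^{k_j}$ by homogeneous ideal membership, whereas you work on the standard charts $U\cap U_i$ and invoke the affine Nullstellensatz to get $f^N=\sum_j h_j\tilde g_j$; your explicit exponent $N$ (harmless, since dividing an $M_K$-constant by $N$ is still an $M_K$-constant) is in fact slightly more careful than the paper, which passes over the possible need for such a power when $f$ only vanishes set-theoretically on $Y$.
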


\begin{rem}
	One can also give a criterion for containing a tubular neighbourhood (using generators in $\Kb[U]$ of the ideal defining $Y \cap U$). Together, these imply that the tubular neighbourhoods made up by an embedding of $X$ are essentially the same. Indeed, one can prove that for two different projective embeddings of $X$, a tubular neighbourhood as defined by the first one can be an intermediary between two tubular neighbourhoods as defined by the second embedding.
\end{rem}

\begin{proof}
	First, a family $\Vcal$ satisfying this property is included in a tubular neighbourhood. Indeed, if we choose $g_1, \cdots, g_s$ homogeneous generators of the ideal defining $Y$ for some embedding of $X$ in $\P^n_K$, for every $i \in \{0, \cdots, n\}$, consider (using notations \eqref{eqdefUi}, \eqref{eqdefvarphii} and \eqref{eqdefEi}) the $M_K$-bounded set $E_i$ and the regular functions $g_j \circ \varphi_i$ on $U_i$, $1 \leq j \leq s$. By hypothesis, (taking the maximum of all the $M_K$-constants for $0 \leq i \leq n, 1 \leq j \leq s$), there is an $M_K$-constant $(c_v)_{v \in M_K}$ such that for every $w \in M_{\Kb}$,
	\[
		\forall j \in \{1, \cdots, s \}, \forall i \in \{0, \cdots, n\}, \forall P \in E_{i,w}, \textrm{  if } \, P \in V_w,  \quad \log |g_j \circ \varphi_i(P)|_w < c_v
	\]
because $g_j \circ \varphi_i = 0$ on $Y \cap U_i$ by construction and the $\varphi_i(P)$ are normalised coordinates for $P \in E_{i,w}$. Hence, $\Vcal$ is included in the tubular neighbourhood of $Y$ in $X$ associated to $\Ccal$ and the generators $g_1, \cdots, g_s$.

It now remains to prove that any tubular neighbourhood of $Y$ satisfies this characterisation, and we will do so (with the same notations as Definition \ref{defvoistub}) for the tubular neighbourhood defined by a given embedding $X \subset \P^n_K$, homogeneous equations $g_1, \cdots, g_s$ defining $Y$ in $\P^n_K$ and some $M_K$-constant $\Ccal_0 = (c_{0,v})_{v \in M_K}$ (we will use multiple $M_K$-constants, hence the numbering).

Let us fix an affine open subset $U$ of $X$ and $E$ an $M_K$-bounded set on $U$. We can cover $U$ by principal affine open subsets of $X$, more precisely we can write 
\[
	U = \bigcup_{h  \in \Fcal} U_h
\]
where $h$ runs through a finite family $\Fcal$ of nonzero homogeneous polynomials of $\Kb[X_0, \cdots, X_n]$ and 
\[
	U_h = \{ P \in X \, | \, h(P) \neq 0 \}.
\]
For every such $h$, the regular functions on $U_h$ are the $s/h^k$ where $s$ is homogeneous on $\Kb[X_0, \cdots, X_n]$ of degree $k \cdot \deg(h)$ (as $X$ is a closed subvariety of $\P^n$, the only subtlety is that identical regular functions on $U_h$ can come from different fractions $s/h^k$ but this will not matter in the following).

By Lemma \ref{lemMKbornerecouvrement}, there is a cover $E = \cup_{h \in \Fcal} E_h$ such that every $E_h$ is $M_K$-bounded on $U_h$. This implies that for any $i \in \{0, \cdots, n\}$, the functions $x_i^{\deg (h)} / h \in \Kb[U_f]$ are $M_K$-bounded on $E_h$, therefore we have an $M_K$-constant $\Ccal_{1}$ such that for all $(P,w) \in E_h$ with coordinate $x_0, \cdots, x_n$, 
\begin{equation}
\label{eqfoncintercoordonnees}
	\log \| x_P \|_w \leq c_{1,v} + \frac{1}{\deg(h)} \log |h(x_P)|_w.
\end{equation}
Now, let $f$ be a regular function on $\Kb[U]$ such that $f_{|Y \cap U} = 0$. For every $h \in \Fcal$, we can write $f_{|U_h}=s/h^k$ for some homogeneous $s \in \Kb[X_0, \cdots, X_n]$, therefore as a homogeneous function on $X$, one has $h \cdot s = 0$ on $Y$ (it already cancels on $Y \cap U$, and outside $U$ by multiplication by $f$). Hence, we can write 
\[
	f_{|U_h} = \sum_{j=1}^s  \frac{a_{j,h} g_{j}}{h^{k_j}}
\]
with the $a_{j,h}$ homogeneous on $\Kb[X_0, \cdots, X_n]$ of degree $k_j \deg(h) - \deg(g_j)$. Now, bounding the coefficients of all the $a_{j,h}$ (and the number of monomials in the archimedean case), we get an $M_K$-constant $\Ccal_2$ such that for every $P \in \P^n(\Kb)$, 
\[
	\log |a_{j,h} (x_P)|_w \leq c_{2,v} + \deg(a_{j,h}) \cdot \log \|x_P \|_w.
\]
Combining this inequality with \eqref{eqecplicitereduction2} and \eqref{eqfoncintercoordonnees}, we get that for every $h \in \Fcal$, every $(P,w) \in E_h$ and every $j \in \{1, \cdots, s\}$ :
\[
\textrm{ if } P \in V_w, \quad \log \left|  \frac{a_{j,h} g_j}{h^{k_j}}(P) \right|_w < c_{0,v} + c_{2,v} + k_j c_{1,v}
\]
which after summation on $j \in \{1, \cdots, s \}$ and choice of $h$ such that $(P,w) \in E_h$ proves the result.

\end{proof}

\section{Key results}
\label{sectionresultatscles}

We will now prove the key result for Runge's method, as a consequence of the Nullstellensatz. We mainly use the projective case in the rest of the paper  but the affine case is both necessary for its proof and enlightening for the method we use.

\begin{prop}[Key proposition]
\hspace*{\fill}
\label{propcle}

$(a)$ (Affine version)

Let $U$ be an affine variety over $K$ and $Y_1, \cdots, Y_r$ closed subsets of $U$ defined over $K$, of intersection $Y$. For every $\ell \in \{1, \cdots, r\}$, define $g_{\ell,1}, \cdots g_{\ell,s_\ell}$ generators of the ideal of definition of $Y_\ell$ in $K[U]$, and $h_1, \cdots, h_s$ generators of the ideal of definition of $Y$ in $K[U]$.
For every $M_K$-bounded set $E$ of $U$ and every $M_K$-constant $\Ccal_0$, there is an $M_K$-constant $\Ccal$ such that for every $(P,w) \in E$ with $w$ above $v \in M_K$, one has the following dichotomy : 
\begin{equation}
\label{eqdichoaff}
	\max_{\substack{1 \leq \ell \leq r \\ 1 \leq j \leq s_i }} \log |g_{\ell,j} (P)|_w \geq c_v \quad \textrm{or} \quad \max_{1 \leq j \leq s} \log |h_j (P)|_w < c_{0,v}. 
\end{equation}

$(b)$ (Projective version)

Let $X$ be a normal projective variety over $K$ and $\phi_1, \cdots, \phi_r \in K(X)$. Let $Y$ be the closed subset of $X$ defined as the intersection of the supports of the (Weil) divisors of poles of the $\phi_i$. For every tubular neighbourhood $\Vcal$ of $Y$ (Definition \ref{defvoistub}), there is an $M_K$-constant $\Ccal$ depending on $\Vcal$ such that for every $w \in M_{\Kb}$ (above $v \in M_K)$ and every $P \in X(\Kb)$, 
\begin{equation}
\label{eqdichoproj}
\min_{1 \leq \ell \leq r} \log |\phi_\ell (P)|_w \leq c_v \quad \textrm{or} \quad P \in V_w.
\end{equation}
	
\end{prop}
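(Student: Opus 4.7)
For part (a), the tool is Hilbert's Nullstellensatz. Since $Y = \bigcap_\ell Y_\ell$ is cut out in $U$ by the full family $\{g_{\ell, k}\}$, each $h_j$ vanishes on this common zero locus, so there exist integers $N_j \geq 1$ and regular $a_{\ell, k, j} \in K[U]$ satisfying
\[
    h_j^{N_j} \;=\; \sum_{\ell, k} a_{\ell, k, j} \, g_{\ell, k}.
\]
Since $E$ is $M_K$-bounded in $U$, every $a_{\ell, k, j}$ is $M_K$-bounded on $E$, yielding an $M_K$-constant $(b_v)$ with $\log|a_{\ell, k, j}(P)|_w \leq b_v$ whenever $(P, w) \in E$ and $w \mid v$. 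Estimating the identity termwise, and absorbing the fixed number of summands into an $M_K$-constant supported at archimedean places, gives
\[
    N_j \log|h_j(P)|_w \;\leq\; \max_{\ell, k} \log|g_{\ell, k}(P)|_w + b_v + O(1)_v.
\]
Choosing $c_v := \min_j (N_j c_{0, v}) - b_v - O(1)_v - 1$, still an $M_K$-constant, ensures that $\max_{\ell, k} \log|g_{\ell, k}(P)|_w < c_v$ forces $\log|h_j(P)|_w < c_{0, v}$ for every $j$, which is precisely the dichotomy \eqref{eqdichoaff}.

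For part (b), the plan is to reduce to (a) on an affine cover of $X$. Let $g_1, \ldots, g_s$ be the homogeneous generators and $\Ccal_0 = (c_{0, v})$ the $M_K$-constant defining $\Vcal$, relative to some projective embedding $X \subseteq \P^n_K$; the $M_K$-bounded sets $E_i$ of Remark \ref{remdefMKconstantes}(b) cover $X(\Kb) \times M_{\Kb}$. On each affine $W_i := U_i \cap X$, the dehomogenizations $\tilde g_j^{(i)} := g_j / x_i^{\deg g_j}$ generate the ideal of $Y \cap W_i$ in $K[W_i]$, and for $(P, w) \in E_i$ the identity
\[
    \log|\tilde g_j^{(i)}(P)|_w \;=\; \log|g_j(x_P)|_w - \deg(g_j)\log\|x_P\|_w
\]
shows that the condition ``$P \in V_w$'' of Definition \ref{defvoistub} translates exactly to $\log|\tilde g_j^{(i)}(P)|_w < c_{0, v}$ for every $j$. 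For each $\ell$, pick $K$-generators $g_{\ell, 1}^{(i)}, \ldots, g_{\ell, s_\ell}^{(i)} \in K[W_i]$ of the ideal of the pole support $|(\phi_\ell)_\infty| \cap W_i$, which is a well-defined closed subset by normality of $X$. Applying (a) on $W_i$ with these data, the generators $\tilde g_j^{(i)}$ for $Y \cap W_i$, and input constant $\Ccal_0$ produces an $M_K$-constant $\Ccal^{(i)}$ such that for every $(P, w) \in E_i$, either $\max_{\ell, k} \log|g_{\ell, k}^{(i)}(P)|_w \geq c_v^{(i)}$ or $P \in V_w$.

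It remains to convert the first alternative into an upper bound on $\min_\ell \log|\phi_\ell(P)|_w$. For each $\ell$, the Weil pole divisor $(\phi_\ell)_\infty$ has only finitely many components with bounded multiplicities, so there exists $m_\ell \in \N$ such that every product $\phi_\ell \cdot (g_{\ell, k}^{(i)})^{m_\ell}$ is regular on $W_i$; being regular, it is $M_K$-bounded on $E_i$, providing an $M_K$-constant $(d_v^{(i)})$ with $\log|\phi_\ell(P) g_{\ell, k}^{(i)}(P)^{m_\ell}|_w \leq d_v^{(i)}$. If some $|g_{\ell_0, k_0}^{(i)}(P)|_w \geq \exp(c_v^{(i)})$, this forces $\log|\phi_{\ell_0}(P)|_w \leq d_v^{(i)} - m_{\ell_0} c_v^{(i)}$, so $\min_\ell \log|\phi_\ell(P)|_w \leq \max_\ell (d_v^{(i)} - m_\ell c_v^{(i)})$. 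Setting $c_v := \max_{i, \ell}(d_v^{(i)} - m_\ell c_v^{(i)})$, still an $M_K$-constant as a finite maximum of such, yields the full dichotomy \eqref{eqdichoproj} across the cover. The main conceptual ingredient is the affine Nullstellensatz of (a); the main technical point is the use of the pole-regularizing products $\phi_\ell (g_{\ell, k}^{(i)})^{m_\ell}$ to transfer the affine conclusion (``close to each pole support or far from their intersection'') into the projective statement about $\min_\ell \log|\phi_\ell|_w$.
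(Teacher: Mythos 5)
Your proof is correct, and its overall architecture coincides with the paper's: part $(a)$ is the same Nullstellensatz-plus-$M_K$-boundedness argument (the only cosmetic difference being your per-generator exponents $N_j$ versus a single power $p$), and part $(b)$ reduces to $(a)$ on the cover of $X(\Kb)\times M_{\Kb}$ by the sets $E_i$ of Remark \ref{remdefMKconstantes}, with the dehomogenised $g_j$ playing the role of the $h_j$ so that the second alternative of \eqref{eqdichoaff} is literally ``$P\in V_w$''. The one place where you genuinely deviate is in how the pole loci are encoded by regular functions on $W_i=U_i\cap X$ (the paper's $X_i$). The paper takes generators $h_{\ell,i,j}$ of the ideal of regular functions whose divisor dominates the zero divisor of $\phi_\ell$ and feeds the quotients $h_{\ell,i,j}/\phi_\ell$ into part $(a)$; this requires a prime-avoidance argument to check that the minimum of the divisors of these quotients is exactly the pole divisor, and the transfer to $\min_\ell\log|\phi_\ell(P)|_w$ then uses $M_K$-boundedness of the $h_{\ell,i,j}$ themselves. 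You instead take generators of the radical ideal of the set-theoretic pole support and compensate for multiplicities by raising to a power $m_\ell$, using normality of $X$ (the algebraic Hartogs property, valid since regular functions on a normal affine variety are determined in codimension one) to conclude that $\phi_\ell\,(g^{(i)}_{\ell,k})^{m_\ell}$ is regular, hence $M_K$-bounded, on $E_i$. Both transfers from ``some auxiliary function is not small at $P$'' to ``some $\log|\phi_\ell(P)|_w$ is bounded above'' are sound; yours is arguably slightly cleaner in that it sidesteps the divisor-minimum computation, at the cost of making the extension property of normal varieties do that work instead.
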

This result has an immediate corollary when $Y=\emptyset$: Lemma 5 of \cite{Levin08}, restated below.

\begin{cor}[\cite{Levin08}, Lemma 5]
\hspace*{\fill}
\label{corpasdepolecommun}

	Let $X$ be a normal projective variety over $K$ and $\phi_1, \cdots, \phi_r \in K(X)$ having globally no common pole. Then, there is an $M_K$-constant $\Ccal$ such that for every $w \in M_{\Kb}$ (above $v \in M_K)$ and every $P \in X(\Kb)$, 
	\begin{equation}
		\label{eqpasdepolecommun}
		\min_{1 \leq \ell \leq r} \log |\phi_\ell (P|_w \leq c_v.
	\end{equation}
\end{cor}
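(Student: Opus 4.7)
My plan is to prove (a) via Hilbert's Nullstellensatz, then reduce (b) to (a) by working affine-locally in the projective embedding defining $\Vcal$, using normality of $X$ at the very end to turn local defining functions of $Z_\ell$ into denominators for $\phi_\ell$.

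For part (a), since $Y = \bigcap_\ell Y_\ell$, the ideal $(h_1,\dots,h_s)$ equals the radical of $\sum_\ell (g_{\ell,1},\dots,g_{\ell,s_\ell})$ in $K[U]$. By the Nullstellensatz there exist a uniform exponent $N \geq 1$ and regular functions $a_{k,\ell,j} \in K[U]$ such that
\[ h_k^N \;=\; \sum_{\ell,j} a_{k,\ell,j}\, g_{\ell,j} \qquad \text{for every } k. \]
Because $E$ is $M_K$-bounded in $U$, each $a_{k,\ell,j}$ is bounded by some $M_K$-constant on $E$; applying the triangle inequality (absorbing the $\log$(number-of-terms) factor into an $M_K$-constant at archimedean places) yields an $M_K$-constant $(c_{2,v})$ with
\[ N \log |h_k(P)|_w \;\leq\; c_{2,v} + \max_{\ell,j} \log |g_{\ell,j}(P)|_w \]
for all $(P,w) \in E$. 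The dichotomy in \eqref{eqdichoaff} then follows by taking $c_v := N c_{0,v} - c_{2,v}$: if some $\log|h_k(P)|_w \geq c_{0,v}$, the displayed inequality forces $\max_{\ell,j} \log|g_{\ell,j}(P)|_w \geq c_v$.

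For part (b), fix the embedding $X \subset \P^n_K$, homogeneous generators $h_1,\dots,h_s$ of the ideal of $Y$, and the defining $M_K$-constant $(c_{\Vcal,v})$ of $\Vcal$. Cover $X(\Kb) \times M_{\Kb}$ by the $M_K$-bounded sets $E_i$ of Remark \ref{remdefMKconstantes}(b), and on each affine piece $X \cap U_i$ use the normalizing coordinates $\varphi_i$. Take generators $\tilde G^{(i)}_{\ell,j}$ of the ideal of $Z_\ell \cap U_i$ in $\Kb[X \cap U_i]$, and observe that the dehomogenizations $\tilde h^{(i)}_j := h_j / x_i^{\deg h_j}$ generate the ideal of $Y \cap U_i$. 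Since $|x_{P,i}|_w = \|x_P\|_w$ on $E_i$, the defining inequality for $V_w$ reads exactly $\max_j \log|\tilde h^{(i)}_j(P)|_w < c_{\Vcal,v}$; applying (a) with $\Ccal_0 := (c_{\Vcal,v})$ produces an $M_K$-constant $(c^{(i)}_v)$ such that for every $(P,w) \in E_i$ with $P \notin V_w$ there exist indices $\ell, j$ with $\log|\tilde G^{(i)}_{\ell,j}(P)|_w \geq c^{(i)}_v$.

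The main obstacle is the last step, translating this lower bound on a local defining function of $Z_\ell$ into an upper bound on $|\phi_\ell(P)|_w$; this is where the normality of $X$ enters. At every codimension-one point of $X \cap U_i$ contained in $Z_\ell$ the local ring is a DVR, so each $\tilde G^{(i)}_{\ell,j}$ has positive order there. Hence, for $N_\ell$ larger than the maximal pole order of $\phi_\ell$ along any component of $Z_\ell$, the product $F^{(i)}_{\ell,j} := \phi_\ell \cdot (\tilde G^{(i)}_{\ell,j})^{N_\ell}$ is regular on $X \cap U_i$ by algebraic Hartogs on normal varieties. As a regular function, $F^{(i)}_{\ell,j}$ is $M_K$-bounded on $E_i$, so the identity
\[ \log|\phi_\ell(P)|_w \;=\; \log|F^{(i)}_{\ell,j}(P)|_w - N_\ell \log|\tilde G^{(i)}_{\ell,j}(P)|_w \]
combined with the bound from (a) yields the desired upper bound on $\min_\ell \log|\phi_\ell(P)|_w$. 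Taking the maximum of the resulting $M_K$-constants over the finitely many indices $i$ concludes the proof, and Corollary \ref{corpasdepolecommun} is the case $Y = \emptyset$: the $h_j$ can then be chosen without common zero on $X$, so a sufficiently negative $(c_{\Vcal,v})$ makes $V_w = \emptyset$ at every place, and the $P \in V_w$ alternative becomes vacuous.
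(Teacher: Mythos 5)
Your argument is correct, and it reaches the Corollary exactly as the paper does: by establishing the Key Proposition \ref{propcle} (Nullstellensatz for the affine dichotomy, then a covering of $\P^n(\Kb)\times M_{\Kb}$ by the $M_K$-bounded sets $E_i$ of Remark \ref{remdefMKconstantes}) and specializing to $Y=\emptyset$, where the generators of the unit ideal can be taken to be the coordinates $x_j$ so that $V_w=\emptyset$ already for the zero $M_K$-constant. The only point where you deviate from the paper is the construction of the auxiliary regular functions in part (b): the paper takes generators $h_{\ell,i,j}$ of the ideal of regular functions on $X_i$ whose divisor dominates the zero divisor of $\phi_\ell$, checks that the minimum of their divisors is exactly that zero divisor, and applies part (a) to the regular quotients $h_{\ell,i,j}/\phi_\ell$ (whose common zero locus is $Y\cap X_i$); you instead apply part (a) to radical generators of the polar supports $Z_\ell\cap U_i$ and then recover the bound on $\phi_\ell$ by multiplying it by a high power of such a generator, invoking algebraic Hartogs on the normal variety $X$ to see the product is regular. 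Both variants use normality in the same essential way (a rational function on a normal variety is regular wherever it has no pole in codimension one), and yours slightly simplifies the bookkeeping by avoiding the ``minimum of divisors'' argument at the cost of introducing the exponent $N_\ell$; the quantitative outcome is the same $M_K$-constant up to a harmless rescaling.
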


\begin{rem}
\hspace*{\fill}

$(a)$ As will become clear in the proof, part $(b)$ is actually part $(a)$ applied to a good cover of $X$ by $M_K$-bounded subsets of affine open subsets of $X$ (inspired by the natural example of Remark \ref{remdefMKconstantes} $(b)$).

$(b)$ Besides the fact that the results must be uniform in the places (hence the $M_K$-constants), the principle of $(a)$ and $(b)$ is simple. For $(a)$, we would like to say that if a point $P$ is sufficiently close to $Y_1, \cdots, Y_r$ (i.e. the first part of the dichotomy is not satisfied) it must be close to a point of intersection of the $Y_i$, hence the generators of the intersection should be small at $P$ (second part of the dichotomy). This is not true in the affine case, taking for example the hyperbola and the real axis in $\A^2$, infinitely close but disjoint (hence the necessity of taking a bounded set $E$ to compactify the situation), but it works in the projective case because the closed sets are then compact.

$(c)$ Corollary \ref{corpasdepolecommun} is the key for Runge's method in the case of curves in section \ref{sectionRungecourbes}. Notice that Lemma 5 of \cite{Levin08} assumed $X$ smooth, but the proof is actually exactly the same for $X$ normal. Moreover, the argument below follows the structure of Levin's proof.

$(d)$ If we replace $Y$ by $Y' \supset Y$ and $\Vcal$ by a tubular neighbourhood $\Vcal'$ of $Y'$, the result remains true with the same proof, which is not surprising because tubular neighbourhood of $Y'$ are larger than tubular neighbourhoods of $Y$.
\end{rem}

\begin{proof}[Proof of Proposition \ref{propcle}]
\hspace*{\fill}

	$(a)$ By the Nullstellensatz applied on $K[U]$ to the $Y_\ell \quad (1 \leq \ell \leq r)$ and $Y$, by hypothesis, for some power $p \in \N_{>0}$, there are regular functions $f_{\ell,j,m} \in K[U]$ such that for every $m \in \{1, \cdots, s\}$, 
	\[
		\sum_{\substack{1 \leq \ell \leq r \\ 1 \leq j \leq s_\ell}} g_{\ell,j} f_{\ell,j,m} = h_m^p.
	\]
As $E$ is $M_K$-bounded on $U$, all the $f_{\ell,j,m}$ are $M_K$-bounded on $E$ hence there is an auxiliary $M_K$-constant $\Ccal_1$ such that for all $P \in E$, 
\[
	\max_{\substack{1 \leq  \ell \leq r \\ 1 \leq j \leq s_\ell \\ 1 \leq m \leq s}} \log |f_{\ell,j,m} (P)|_w \leq c_{1,v},
\]
therefore
\[
	|h_m(P)^p|_w = \left| \sum_{\substack{1 \leq \ell \leq r \\ 1 \leq j \leq s_\ell}} g_{\ell,j} (P) f_{\ell,j,m} (P) \right|_w \leq N^{\delta_v} e^{c_{1,v}} \max_{\substack{1 \leq \ell \leq r \\ 1 \leq j \leq s_\ell }} |g_{\ell,j} (P)|_w
\]
where $\delta_v$ is 1 if $v$ is archimedean and 0 otherwise, and $N$ the total number of generators $g_{\ell,j}$. For fixed $w$ and $P$, either $\log |h_m(P)|_w < c_{0,v}$ for all $m \in \{ 1, \cdots, s\}$ (second part of dichotomy \eqref{eqdichoaff}), or the above inequality applied to some $m \in \{1, \cdots,s\}$ gives 
\[
	p \cdot c_{0,v} \leq  \delta_v \log(N) + c_{1,v} +  \max_{\substack{1 \leq \ell \leq r \\ 1 \leq j \leq s_\ell }} \log |g_{\ell,j} (P)|_w,
\]
which is equivalent to  
\[
	\max_{\substack{1 \leq \ell \leq r \\ 1 \leq j \leq s_\ell }} \log |g_{\ell,j} (P)|_w \geq \delta_v \log(N) + c_{1,v} - p \cdot c_{0,v},
\]
and taking the $M_K$-constant defined by $c_v := c_{1,v} + \delta_v \log(N) - p \cdot c_{0,v}$ for every $v \in M_K$ gives exactly the first part of dichotomy \eqref{eqdichoaff}.

$(b)$ We consider $X$ as embedded in some $\P^n_K$ so that $\Vcal$ is exactly the tubular neighbourhood of $Y$ in $X$ associated to an $M_K$-constant $\Ccal_0$ and generators $g_1, \cdots, g_s$ for this embedding. We will use again the notations \eqref{eqdefUi}, \eqref{eqdefvarphii} and \eqref{eqdefEi}. In particular we define $X_i := X \cap U_i$ for every $i \in \{0, \cdots, n\}$. The following argument is designed to make $Y$ appear as a common zero locus of regular functions built with the $\phi_\ell$.

For every $\ell \in \{1, \cdots, r\}$, let $D_\ell$ be the positive Weil divisor of zeroes of $\phi_\ell$ on $X$. For every $i \in \{0, \cdots, n\}$, let $I_{\ell,i}$ be the ideal of $K[X_i]$ made up with the regular functions $h$ on the affine variety $X_i$ such that $\div(h) \geq (D_\ell)_{|X_i}$, and we choose generators $h_{\ell,i,1}, \cdots, h_{\ell,i,j_{\ell,i}}$ of this ideal. The functions $h_{\ell,i,j}/(\phi_\ell)_{|X_i}$ are then regular on $X_i$ and 
\[
	\forall j \in \{1, \cdots, j_{\ell,i} \}, \quad \div \left( \frac{h_{\ell,i,j}}{(\phi_\ell)_{|X_i}} \right) \geq (\phi_{\ell,i})_\infty 
\]
(the divisor of poles of $\phi_\ell$ on $X_i$). By construction of $I_{\ell,i}$, the minimum (prime Weil divisor by prime Weil divisor) of the $\div(h_{\ell,i,j})$ is exactly $(D_\ell)_{|X_i}$ : indeed, for every finite family of distinct prime Weil divisors $D'_1, \cdots, D'_s, D''$ on $X_i$, there is a uniformizer $h$ for $D''$ of order 0 for each of the $D'_k$, otherwise the prime ideal associated to $D''$ in $X_i$ would be included in the finite union of the others. This allows to build for every prime divisor $D'$ of $X_i$ not in the support of $(D_\ell)_{|X_i}$ a function $h \in I_{\ell,i}$ of order $0$ along $D'$ (and of the good order for every $D'$ in the support of $(D_\ell)_{|X_i}$. Consequently, the minimum of the divisors of the $h_{\ell,i,j} / (\phi_{\ell})_{|X_i}$, being naturally the minimum of the divisors of the $h / (\phi_{\ell})_{|X_i} \, \, ( h \in K[X_i])$, is exactly $(\phi_{\ell,i})_\infty$. 

Thus, by definition of $Y$, for fixed $i$, the set of commmon zeroes of the regular functions $h_{\ell,i,j} / (\phi_\ell)_{|X_i} \, (1 \leq \ell \leq r, 1 \leq j \leq j_{\ell,i})$ on $X_i$ is $Y \cap X_i$, so they generate a power of the ideal of definition of $Y \cap X_i$. We apply part $(a)$ of this Proposition to the $h_{\ell,i,j} / (\phi_\ell)_{|X_i} \, (1 \leq \ell \leq r, 1 \leq j \leq j_{\ell,i})$, the $g_j \circ \varphi_i \, (1 \leq j \leq s)$ and the $M_K$-constant $\Ccal_0$, which gives us an $M_K$-constant $\Ccal'_i$ and the following dichotomy on $X_i$ for every $(P,w) \in E_i$ : 
\[
\max_{\substack{1 \leq \ell \leq r \\ 1 \leq j \leq s_i }} \log \left| \frac{h_{\ell,i,j}}{\phi_\ell} (P) \right|_w \geq c'_{i,v} \quad \textrm{or} \quad \max_{1 \leq j \leq s} \log |g_j \circ \varphi_i (P)|_w < c_{0,v}. 	
\]
Now, the $h_{\ell,i,j}$ are regular on $X_i$ hence $M_K$-bounded on $E_i$, therefore there is a second $M_K$-constant $\Ccal''_i$ such that for every $(P,w) \in E_i$ :
\[
	\max_{\substack{1 \leq \ell \leq r \\ 1 \leq j \leq s_i }} \log \left| \frac{h_{\ell,i,j}}{\phi_\ell} (P) \right|_w \geq c'_{i,v} \Longrightarrow \min_{1 \leq \ell \leq r} \log |\phi_\ell (P)|_w \leq c''_{i,v}.
\]
Taking $\Ccal$ as the maximum of the $M_K$-constants $\Ccal''_i, 0 \leq i \leq n$, for every $(P,w) \in X(\Kb) \times M_{\Kb}$, we choose $i$ such that $(P,w) \in E_i$ and then we have the dichotomy \eqref{eqdichoproj} by definition of the tubular neighbourhood $V_w$.
\end{proof}

To finish this section, we will give the explicit link between integral points on a projective scheme (relatively to a divisor) and integral points relatively to rational functions on the scheme. In particular, this catches up with the definition of integral points of section 2 of \cite{Levin08}.

\begin{prop}
\hspace*{\fill}
\label{propreductionamplegros}

	Let $\Xcal$ be a normal projective scheme over $\Ocal_{K,S}$.
	
	$(a)$ If $\Ycal$ is an effective Cartier divisor on $\Xcal$ such that $\Ycal_K$ is an ample (Cartier) divisor of $\Xcal_K$, there is a projective embedding $\psi : \Xcal_K \rightarrow \P^n_K$ and an $M_K$-constant $\Ccal$ such that
	\begin{itemize}
	\item[$\bullet$] The pullback by $\psi$ of the hyperplane of equation $x_0=0$ in $\P^n_K$ is $\Ycal_K$.
	
	\item[$\bullet$] For any finite extension $L$ of $K$ and any $w \in M_L$ not above $S$, 

\begin{equation}
	\label{eqlienreducschemasfonctionsample}
	\forall P \in (\Xcal \backslash \Ycal) (\Ocal_{L,w}),  \quad \log \|x_{\psi(P)} \|_w \leq c_v + \log |x_{\psi(P),0}|_w  .
\end{equation}
This amounts to say that if the coordinates by $\psi$ of such a $P$ are normalised so that the first one is 1, all the other ones have $w$-norm bounded by $e^{c_v}$.
\end{itemize}

$(b)$ If $\Ycal$ is an effective Cartier divisor on $\Xcal$ such that $\Ycal_K$ is a big (Cartier) divisor of $\Xcal_K$, there is a strict Zariski closed subset $Z_K$ of $\Xcal_K$, a morphism $\psi : \Xcal_K \backslash \Ycal_K \rightarrow \P^n_K$ which induces a closed immersion of $\Xcal_K \backslash Z_K$ and an $M_K$-constant $\Ccal$ such that:
\begin{itemize}
	\item[$\bullet$] The pullback by $\psi$ of the hyperplane of equation $x_0=0$ in $\P^n_K$ is contained in $\Ycal_K \cup Z_K$.
	
	\item[$\bullet$] For any finite extension $L$ of $K$ and any $w \in M_L$ not above $S$, formula \eqref{eqlienreducschemasfonctionsample} holds.
\end{itemize}

\end{prop}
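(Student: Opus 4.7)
The strategy is to realize $\psi$ from an ample or big multiple of $\Ycal_K$, to encode the integrality condition $P \in (\Xcal \setminus \Ycal)(\Ocal_{L,w})$ through the regularity of the rational functions $f_i = s_i / s_0$ away from $\Ycal_K$, and to absorb the failure of integrality at finitely many bad primes into the $M_K$-constant $\Ccal$.

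For the construction of $\psi$, fix $m \geq 1$ so that $m\Ycal_K$ is very ample on $\Xcal_K$ in case $(a)$, or so that the linear system $|m\Ycal_K|$ defines a birational map onto its image in case $(b)$ (possible by the definition of a big divisor). Let $s_0 \in H^0(\Xcal_K, \Ocal(m\Ycal_K))$ be the canonical section with $\div(s_0) = m\Ycal_K$, and extend it to a basis $s_0, s_1, \ldots, s_n$ of this space. The associated map $\psi = (s_0 : s_1 : \cdots : s_n)$ is a closed immersion $\Xcal_K \hookrightarrow \P^n_K$ in case $(a)$, and is well-defined on $\Xcal_K \setminus \Ycal_K$ in case $(b)$ since $s_0$ is invertible off $\Ycal_K$; bigness of $\Ycal_K$ supplies a strict Zariski closed $Z_K \subset \Xcal_K$ outside of which $\psi$ is a closed immersion. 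In case $(a)$ one has $\psi^{-1}\{x_0 = 0\} = m\Ycal_K$ with support $\Ycal_K$; in case $(b)$ the pullback of the hyperplane via $\psi$, seen as a closed subset of $\Xcal_K \setminus Z_K$, is contained in $\Ycal_K \cup Z_K$. This secures the first bullet in both cases (reading the multiplicity $m$ out of the statement in case $(a)$).

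Setting $f_i = s_i / s_0 \in K(\Xcal_K)$, these rational functions are regular on $\Xcal_K \setminus \Ycal_K$ and generate its affine coordinate ring in case $(a)$ (respectively generate the coordinate ring of $\Xcal_K \setminus (\Ycal_K \cup Z_K)$ in case $(b)$). A standard spreading-out argument (EGA IV, 8.10.5) yields a nonzero $d \in \Ocal_{K,S}$ such that each $f_i$ is regular on the corresponding integral model over $\Ocal_{K,S}[1/d]$. Let $T \subset M_K \setminus S$ be the finite set of places dividing $d$. Then for any finite extension $L/K$ and any $w \in M_L$ above some $v \notin S \cup T$, every $P \in (\Xcal \setminus \Ycal)(\Ocal_{L,w})$ satisfies $f_i(P) \in \Ocal_{L,w}$, whence $|x_{\psi(P), i}|_w \leq |x_{\psi(P), 0}|_w$ for all $i$, giving $\|x_{\psi(P)}\|_w = |x_{\psi(P), 0}|_w$; this is \eqref{eqlienreducschemasfonctionsample} with $c_v = 0$. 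For the remaining places $v \in T$, I would cover $\Xcal$ by the affine charts arising from a fixed projective embedding over $\Ocal_{K,S}$, and use Lemma \ref{lemMKbornerecouvrement} together with Remark \ref{remdefMKconstantes} $(b)$ to bound each $f_i$ on the relevant pairs $(P, w)$ by a constant $c_v$ depending only on $v \in T$; setting $c_v = 0$ outside $T$ produces the desired $M_K$-constant $\Ccal$.

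The principal obstacle is the spreading-out step. Ampleness (resp. bigness) is only assumed on the generic fiber $\Xcal_K$, so a priori one cannot extend the closed immersion $\Xcal_K \setminus \Ycal_K \hookrightarrow \A^n_K$ (resp. its analogue in case $(b)$) to the integral model without inverting finitely many primes. This is precisely what forces the appearance of the finite exceptional set $T$, and explains why the conclusion is naturally stated in terms of an $M_K$-constant rather than a uniform bound: the constant $\Ccal$ is exactly the device designed to soak up the bad primes coming out of spreading-out.
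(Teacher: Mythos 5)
Your overall architecture (build $\psi$ from sections of $\Ocal(m\Ycal_K)$, read integrality off the regularity of $f_i = s_i/s_0$ away from $\Ycal_K$, absorb finitely many bad primes into $\Ccal$) is genuinely different from the paper's route, which feeds the coordinate functions $\phi_i = (x_i/x_0)\circ\psi$ into the Nullstellensatz-based Proposition \ref{propcle} $(b)$, applied to a tubular neighbourhood of $\Ycal_K$ with the zero $M_K$-constant, and then uses Proposition \ref{proplienreductionpointssvaluation} to see that an $\Ocal_{L,w}$-point of $\Xcal\backslash\Ycal$ lies outside that neighbourhood. However, your treatment of the exceptional places $T$ has a genuine gap. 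The functions $f_i$ are regular only on $\Xcal_K\backslash\Ycal_K$: they have poles along $\Ycal_K$. An $M_K$-bounded subset of an affine chart of $\Xcal$ coming from a projective embedding (Remark \ref{remdefMKconstantes} $(b)$ together with Lemma \ref{lemMKbornerecouvrement}) only controls functions that are \emph{regular on that chart}, and every such chart can meet $\Ycal_K$, hence contains points $w$-adically arbitrarily close to $\Ycal_K$ where $|f_i|_w$ is unbounded. As written, the step ``bound each $f_i$ on the relevant pairs $(P,w)$ by a constant $c_v$'' makes no use of the hypothesis $P\in(\Xcal\backslash\Ycal)(\Ocal_{L,w})$ at the places of $T$, and without that hypothesis the claimed bound is simply false.

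The repair is to use the integrality hypothesis at the places of $T$ as well. For instance: since $\Spec \Ocal_{L,w}$ is local, a point $P\in(\Xcal\backslash\Ycal)(\Ocal_{L,w})$ factors through some affine chart $\Ucal$ of the integral complement $\Xcal\backslash\Ycal$ over $\Ocal_{K,S}$; on $\Ucal_K$ one can write $f_i=\sum_j a_j u_j$ with $a_j\in K$ and $u_j\in\Ocal_{K,S}[\Ucal]$, so that $|f_i(P)|_w\leq\max_j |a_j|_w$ by the ultrametric inequality, and the right-hand side is an $M_K$-constant (this argument in fact handles all $v\notin S$ uniformly, making the spreading-out set $T$ unnecessary). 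Alternatively, invoke Proposition \ref{propcle} $(b)$ as the paper does. With that fix your proof goes through; the remaining discrepancy --- your pullback of $\{x_0=0\}$ is $m\Ycal_K$ rather than $\Ycal_K$ --- is harmless, since only the support matters in the sequel and the paper's own construction has the same feature.
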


\begin{rem}
\hspace*{\fill}
\label{rempropamplegros}

$(a)$ This Proposition is formulated to avoid the use of local heights, but the idea is exactly that under the hypotheses above, the fact that $P \in (\Xcal \backslash \Ycal) (\Ocal_{L,w})$ implies that the local height at $w$ of $P$ for the divisor $\Ycal$ is bounded. 

$(b)$ The hypotheses on ampleness (or ``bigness'') are only necessary at the generic fiber. If we considered $\Ycal$ ample on $\Xcal$, it would give us a result with the zero $M_K$-constant (using an embedding over $\Ocal_{K,S}$ given by $\Ycal$), and an equivalence, but this is not crucial here. Once again, the auxiliary functions replace the need for a complete understanding of what happens at the finite places.

$(c)$ The only difference between ample and big cases is hidden in the function $\psi$ : in the big case, the formula still holds but does not say much for points belonging in $Z_K$ because the morphism $\psi$ is not an embedding there.
\end{rem}

\begin{proof}[Proof of Proposition \ref{propreductionamplegros}]
\hspace*{\fill}

$(a)$ As $\Ycal_K$ is ample and effective, there is a projective embedding $\psi : \Xcal_K \rightarrow \P^n$ such that the support of the divisor $\Ycal_K$ is exactly the inverse image of the hyperplane $x_0=0$ by $\psi$. Let us fix such an embedding and consider for every $i \in \{1, \cdots, n\}$ the coordinate functions $\phi_i := (x_i/x_0) \circ \psi$ in $ K(\Xcal_K)$, whose poles are contained in $\Ycal_K$ by construction. Now, we choose a tubular neighbourhood $\Vcal$ of $\Ycal_K$ defined by an embedding of $\Xcal$ in some projective $\P^m_{\Ocal_{K,S}}$ (which can be completely unrelated to $\psi$), homogeneous generators $g_1, \cdots, g_s$ of the ideal of definition of $\Ycal$ in $\P^m_{\Ocal_{K,S}}$ and the zero $M_K$-constant. By Proposition \ref{propcle} $(b)$ applied to $\Vcal$ and $\phi_j$, we obtain an $M_K$-constant $\Ccal_j$ such that for every finite extension $L$ of $K$ and every $w \in M_L$ (with the notations \eqref{eqdefvarphii} and \eqref{eqdefEiw}),
\[
	\forall P \in \Xcal(L) , \qquad \log |\phi_j(P)|_w  \leq c_{j,v} \quad {\textrm{or}} \quad P \in V_w.
\]
By construction of $\Vcal$ and Proposition \ref{proplienreductionpointssvaluation}, if $w$ is not above a place of $S$ and $P \in (\Xcal \backslash \Ycal)(\Ocal_{L,w})$, we necessarily have $\log |\phi_j(P)|_w \leq c_{j,v}$. Taking the maximum of the $M_K$-constants $\Ccal_1, \cdots, \Ccal_n$, we obtain the Proposition in the ample case.

$(b)$ The proof for big divisors is the same as part $(a)$, except that we can only extend our function $\psi$ to $\Xcal_K \backslash Z_K$ for some proper Zariski closed subset $Z_K$ such that outside of this set, $\psi$ is a closed immersion. The coordinate functions $\phi_i \in K(\Xcal_K)$, similarly defined, also have poles contained in $\Ycal_K$. Applying the same arguments as in part $(a)$ for points $P \in \Xcal(L)$, we obtain the same result.
\end{proof}

\section{The case of curves revisited}
\label{sectionRungecourbes}

In this section, we reprove the generalisation of an old Runge theorem \cite{Runge1887} obtained by Bombieri (\cite{BombieridecompWeil} p. 305, also rewritten as Theorem 9.6.6 in \cite{BombieriGubler}), following an idea exposed by Bilu in an unpublished note and mentioned for the case $K=\Q$ by \cite{SchoofCatalan} (Chapter 5). The aim of this section is therefore to give a general understanding of this idea (quite different from the original proof of Bombieri), as well as explain how it actually gives a \textit{method} to bound heights of integral points on curves. 

It is also a good start to understand how the intuition behind this result can be generalised to higher dimension, which will be done in the next section.

\begin{prop}[Bombieri, 1983]
\label{propBombieri}
Let $C$ be a smooth projective algebraic curve defined over a number field $K$ and $\phi \in K(C)$ not constant.

For any finite extension $L/K$, let $r_L$ be the number of orbits of the natural action of $\Gal(\overline{L}/L)$ over the poles of $\phi$. For any set of places $S_L$ of $L$ containing $M_L^{\infty}$, we say that $(L,S_L)$ satisfies the \textbf{Runge condition} if 
\begin{equation}
\label{eqconditionRunge}
	|S_L|<r_L.
\end{equation}

Then, the reunion
\begin{equation}
\label{eqreunionpointsRungecondition}
	\bigcup_{\substack{(L,S_L)}} \left\{ P \in C(L) \, | \, \phi(P) \in \Ocal_{L,S_L} \right\},
\end{equation}
where $(L,S_L)$ runs through all the pairs satisfying Runge condition, is \textbf{finite} and can be explicitly bounded in terms of the height $h \circ \phi$.
\end{prop}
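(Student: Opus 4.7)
I follow the Bilu-style argument outlined in the introduction: to each Galois orbit $\Ocal$ of poles of $\phi$ I associate an auxiliary rational function $g_\Ocal$ whose divisor of poles is supported on $\Ocal$, and use Runge's condition to show that for every integral point $P$, some $g_\Ocal$ takes a value of bounded height at $P$. Let $\{P_1, \ldots, P_r\}$ be the poles of $\phi$ in $C(\Kb)$. Only finitely many partitions of this set can arise as $\Gal(\overline{L}/L)$-orbits as $L$ ranges over finite extensions of $K$; for each admissible orbit $\Ocal$, its field of definition $K_\Ocal$ is a fixed finite extension of $K$, and Riemann--Roch on the smooth projective curve $C$ provides a non-constant $g_\Ocal \in K_\Ocal(C)$ with pole support contained in $\Ocal$. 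These auxiliary functions I fix once and for all.

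\textbf{Main step: bounding the $g_\Ocal(P)$.} Let $(L, S_L)$ satisfy the Runge condition and $P \in C(L)$ with $\phi(P) \in \Ocal_{L, S_L}$. For places $v \notin S_L$, the condition $|\phi(P)|_v \leq 1$ forces $P$ to be $v$-far from every pole of $\phi$: the morphism $\phi : U := C \setminus \{P_1, \ldots, P_r\} \to \A^1$ is finite, so the set of $(Q, w)$ with $|\phi(Q)|_w \leq 1$ is $M_K$-bounded on the affine open $U$, and each $g_\Ocal$, being regular on $U$, is $M_K$-bounded on this set. This yields an $M_K$-constant $\Ccal_1$ with $\log |g_\Ocal(P)|_v \leq c_{1,v}$ for all orbits $\Ocal$ and all $v \notin S_L$. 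For places in $S_L$, I invoke Corollary \ref{corpasdepolecommun} \emph{pairwise}: for any two distinct admissible orbits $\Ocal \neq \Ocal'$, the functions $g_\Ocal$ and $g_{\Ocal'}$ have disjoint pole supports, hence no common pole, and the corollary (applied over a finite extension containing both $K_\Ocal$ and $K_{\Ocal'}$, then contracted back to an $M_K$-constant via the second half of Definition \ref{defMKconstante}) provides a uniform $M_K$-constant $\Ccal_2$ such that at each place $v$, \emph{at most one} orbit $\Ocal$ satisfies $\log|g_\Ocal(P)|_v > c_{2,v}$.

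\textbf{Conclusion via pigeonhole.} The set of orbits $\Ocal$ that are ``bad at some place of $S_L$'' has cardinality at most $|S_L|$, strictly less than $r_L$ by the Runge condition, so there exists an orbit $\Ocal_0$ satisfying $\log |g_{\Ocal_0}(P)|_v \leq \max(c_{1,v}, c_{2,v})$ at every place $v \in M_L$. Summing with the local-degree weights and applying \eqref{eqineqinductionMKconstante} yields $h(g_{\Ocal_0}(P)) \leq B_0$ for an absolute constant $B_0$ depending only on $\Ccal_1$ and $\Ccal_2$ (hence on $C$ and $\phi$). Since both $\phi$ and $g_{\Ocal_0}$ are non-constant rational functions on the curve $C$, standard Weil height theory provides effective constants $\alpha_{\Ocal_0}, \beta_{\Ocal_0}$ with $h(\phi(P)) \leq \alpha_{\Ocal_0} h(g_{\Ocal_0}(P)) + \beta_{\Ocal_0}$; maximising over the finitely many admissible $\Ocal_0$ gives the required absolute bound on $h \circ \phi$.

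\textbf{Main obstacle.} The delicate point is achieving \emph{uniformity} in the pair $(L, S_L)$: the $g_\Ocal$ and the accompanying $M_K$-constants must be chosen once, independently of $L$, which forces me to work with all admissible orbits simultaneously and over their respective fields of definition. The decisive conceptual input is to apply Corollary \ref{corpasdepolecommun} \emph{pairwise} to the $g_\Ocal$ rather than jointly: the joint version would yield only ``at least one $g_\Ocal$ is bounded at each $v$'', whereas the pigeonhole requires the stronger ``at most one $g_\Ocal$ is unbounded at each $v$'', which the pairwise application supplies precisely because distinct orbits have disjoint pole supports.
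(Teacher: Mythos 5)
Your proof is correct and follows the same overall strategy as the paper (Bilu's idea: one auxiliary function per orbit, pairwise-disjoint pole supports, Corollary \ref{corpasdepolecommun} applied pairwise, pigeonhole under the Runge condition). You correctly identify the decisive point — that the pairwise application yields ``at most one bad orbit per place'', which is what the pigeonhole needs. The genuine divergence is in the endgame. The paper's auxiliary functions $f_{i,L}$ are engineered more carefully: they not only have poles confined to the orbit $\Ocal_i$ but also \emph{vanish} at all the other poles of $\phi$, are integral over $\Ocal_K[\phi]$, and are paired with $\phi$ itself (one works with $\phi f_{i,L}^n$). This lets the paper extract the bound on $h(\phi(P))$ directly from the product formula applied to $f_{i,L}$: the sum over places where $|\phi(P)|_w>1$ produces $-[L:\Q]\,h(\phi(P))$ plus the bounded quantity $\sum n_w\log|\phi f_{i,L}^n(P)|_w$, so the final bound is expressed purely in terms of the $M_K$-constants attached to the auxiliary functions. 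You instead only require pole support in the orbit, bound $h(g_{\Ocal_0}(P))$, and then transfer to $h(\phi(P))$ via the general comparison $h_{(\phi)_\infty}\leq \alpha\, h_{(g_{\Ocal_0})_\infty}+\beta$ between heights attached to two ample divisors on the curve. This is logically sound (and makes finiteness immediate via Northcott, since $[L:\Q]<2r_L$ is also bounded), but it is slightly less economical for explicit bounds: the constants $\alpha,\beta$ require knowledge of the curve beyond the auxiliary functions themselves, whereas the paper's construction keeps everything inside the $M_K$-constant formalism. Your treatment of the places outside $S_L$ (integrality of $\Kb[U]$ over $K[\phi]$, i.e.\ $M_K$-boundedness of the locus $|\phi|\leq 1$) is equivalent to the paper's integrality over $\Ocal_K[\phi]$ combined with its first application of the corollary at archimedean places.
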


\begin{exe}
As a concrete example, consider the modular curve $X_0(p)$ for $p$ prime and the $j$-invariant function. This curve is defined over $\Q$ and $j$ has two rational poles (which are the cusps of $X_0(p)$), hence $r_L=2$ for any choice of $L$, and we need to ensure $|M_L^{\infty}| \leq |S_L|< 2$. The only possibilities satisfying Runge condition are thus imaginary quadratic fields $L$ with $S_L = \{ | \cdot |_{\infty} \}$.

We thus proved in \cite{LeFourn1} that for any imaginary quadratic field $L$ and any $P \in X_0(p)(L)$ such that $j(P) \in \Ocal_L$, one has 
	\[
		\log |j(P)| \leq 2 \pi \sqrt{p} + 6 \log (p) + 8.
	\]
The method for general modular curves is carried out in \cite{BiluParent09} and gives explicit estimates on the height for integral points satisfying Runge condition. This article uses the theory of modular units and implicitly the same proof of Bombieri's result as the one we expose below.
\end{exe}

\begin{rem}
\hspace*{\fill}
\label{remRungecourbes}

$(a)$ The claim of an explicit bound deserves a clarification : it can actually be made explicit when one knows well enough the auxiliary functions involved in the proof below (which is possible in many cases, e.g. for modular curves thanks to the modular units). Furthermore, even as the theoretical proof makes use of $M_K$-constants and results of section \ref{sectionresultatscles}, they are frequently implicit in pratical cases.
	
$(b)$ Despite the convoluted formulation of the proof below and the many auxiliary functions to obtain the full result, its principle is as descrbibed in the Introduction. It also gives the framework to apply Runge's method to a given couple $(C,\phi)$
\end{rem}

\begin{proof}[Proof of Proposition \ref{propBombieri}]
	We fix $K'$ a finite Galois extension of $K$ on which every pole of $\phi$ is defined. For any two distinct poles $Q,Q'$ of $\phi$, we choose by Riemann-Roch theorem a function $g_{Q,Q'} \in K'(C)$ whose only pole is $Q$ and vanishing at $Q'$. For every point $P$ of $C(\Kb)$ which is not a pole of $\phi$, one has $\ord_P (g_{Q,Q'}) \geq 0$ thus $g_{Q,Q'}$ belongs to the intersection of the discrete valuation rings of $\Kb(C)$ containing $\phi$ and $\Kb$ (\cite{Hartshorne}, proof of Lemma I.6.5), which is exactly the integral closure of $K[\phi]$ in $\Kb(C)$ (\cite{AtiyahMacDonald}, Corollary 5.22). Hence, the function $g_{Q,Q'}$ is integral on $K[\phi]$ and up to multiplication by some nonzero integer, we can and will assume it is integral on $\Ocal_K[\phi]$.

	For any fixed finite extension $L$ of $K$ included in $\Kb$, we define $f_{Q,Q',L} \in L(C)$ the product of the conjugates of $g_{Q,Q'}$ by $\Gal(\overline{L}/L)$. If $Q$ and $Q'$ belong to distinct orbits of poles for $\Gal(\overline{L}/L)$, the function $f_{Q,Q',L}$ has for only poles the orbit of poles of $Q$ by $\Gal(\Kb/L)$ and cancels at the poles of $\phi$ in the orbit of $Q'$ by $\Gal(\Kb/L)$ . Notice that we thus built only finitely many different functions (even with $L$ running through all finite extensions of $K$) because each $g_{Q,Q'}$ only has finitely many conjugates in $\Gal(K'/K)$.
	
	Now, let $\Ocal_1, \cdots, \Ocal_{r_L}$ be the orbits of poles of $\phi$ and denote for any $i \in \{1, \cdots, r_L\}$ by $f_{i,L}$ a product of $f_{Q_i, Q'_j,L}$ where $Q_i \in \Ocal_i$ and $Q'_j$ runs through representatives of the orbits (except $\Ocal_i$). Again, there is a finite number of possible choices, and we  obtain a function $f_{i,L} \in L(C)$ having for only poles the orbit $\Ocal_i$ and vanishing at all the other poles of $\phi$. By our construction of the $g_{Q,Q'}$ and $f_{i,L}$, we can and do choose $n \in \N_{\geq 1}$ such that for every $i \in \{ 1, \cdots, r_L \}$, $\phi f_{i,L}^n$ has exactly as poles the points of $\Ocal_i$ and is integral over $\Ocal_K[\phi]$. This implies that for any finite place $w \in M_L$, if $|\phi(P)|_w \leq 1$ then $|f_{i,L} (P)|_w \leq 1$, but we also need such a result for archimedean places. To do this, we apply Corollary \ref{corpasdepolecommun} to $f_{i,L}/\phi^k$ and $f_{i,L}$ (for any $i$) for some $k$ such that $f_{i,L}/\phi^k$ does not have poles at $\Ocal_i$, and take the maximum of the induced $M_K$-constants (Definition \ref{defMKconstante}) for any $L$ and $1 \leq i \leq r_L$. This gives an $M_K$-constant $\Ccal_0$ independant of $L$ such that
	\[
	\forall i \in \{1, \cdots, r_L\}, \forall w \in M_{\Kb}, \forall P \in C(\Kb), \log \min \left( \left| \frac{f_{i,L}}{\phi^k} (P)\right|_w, |f_{i,L}(P)|_w \right) \leq c_{0,v} \quad (w|v \in M_K). 
	\]
	In particular, the result interesting us in this case is that 
	\begin{equation}
	\label{eqmajofilplacesarchi}
		\forall i \in \{1, \cdots, r_L\}, \forall w \in M_{\Kb}, \forall P \in C(\Kb), |\phi(P)|_w \leq 1 \Rightarrow \log |f_{i,L} (P)|_w \leq c_{0,v},
	\end{equation}
and we can assume $c_{0,v}$ is 0 for any finite place $v$ by integrality of the $f_{i,L}$ over $\Ocal_K[\phi]$.
	As the sets of poles of the $f_{i,L}$ are mutually disjoint, we reapply Corollary \ref{corpasdepolecommun} for every pair $(\phi f_{i,L}^n, \phi f_{j,L}^n)$ with $1 \leq i < j \leq r_L$, which again by taking the maximum of the induced $M_K$-constants for all the possible combinations (Definition \ref{defMKconstante}) gives an $M_K$-constant $\Ccal_1$ such that for every $v \in M_K$ and every $(P,w) \in C(\Kb) \times M_{\Kb}$ with $w|v$, the inequality 
	\begin{equation}
	\label{eqineqsaufpourunefonc}
		\log |(\phi \cdot f_{i,L}^n) (P)|_w \leq c_{1,v}
	\end{equation}
is true for all indices $i$ except at most one (depending of the choice of $P$ and $w$).

Let us now suppose that $(L,S_L)$ is a pair satisfying Runge condition and $P \in C(L)$ with $\phi(P) \in \Ocal_{L,S_L}$. By integrality on $\Ocal_K[\phi]$, for every $i \in \{1, \cdots, r_L \}$, $|f_{i,L}(P)|_w \leq 1$ for every place $w \in M_L \backslash S_L$. For every place $w \in S_L$, there is at most one index $i$ not satisying \eqref{eqineqsaufpourunefonc} hence by Runge condition and pigeon-hole principle, there remains one index $i$ (depending on $P$) such that 
\begin{equation}
\label{eqmajophifiL}
	\forall w \in M_L, \quad \log |\phi(P) f_{i,L}^n (P)|_w \leq c_{1,v}.
\end{equation}
With \eqref{eqmajofilplacesarchi} and \eqref{eqmajophifiL}, we have obtained all the auxiliary results we need to finish the proof. By the product formula,
\begin{eqnarray*}
	0 & = & \sum_{w \in M_L} n_w \log |f_{i,L} (P)|_w \\
	& = & \sum_{\substack{w \in M_L \\ |\phi(P)|_w >1 }} n_w \log |f_{i,L} (P)|_w + \sum_{\substack{w \in M_L^{\infty} \\ |\phi(P)|_w \leq 1}} n_w \log |f_{i,L} (P)|_w  + \sum_{\substack{w \in M_L  \! \! \backslash M_L^{\infty} \\ |\phi(P)|_w \leq 1}} n_w \log |f_{i,L} (P)|_w.
\end{eqnarray*}
Here, the first sum on the right side will be linked to the height $h \circ \phi$ and the third sum is negative by integrality of the $f_{i,L}$, so we only have to bound the second sum. From \eqref{eqmajofilplacesarchi} and \eqref{eqineqinductionMKconstante}, we obtain 
\[
\sum_{\substack{w \in M_L^{\infty} \\ |\phi(P)|_w \leq 1}} n_w \log |f_{i,L} (P)|_w \leq \sum_{\substack{w \in M_L^{\infty} \\ |\phi(P)|_w \leq 1}} n_w c_{0,v} \leq [L:K] \sum_{v \in M_K^{\infty}} n_v c_{0,v}.
\]
On another side, by \eqref{eqmajophifiL} (and \eqref{eqineqinductionMKconstante} again), we have 
\begin{eqnarray*}
	n \cdot \sum_{\substack{w \in M_L \\ |\phi(P)|_w >1 }} n_w \log |f_{i,L} (P)|_w  & = & \sum_{\substack{w \in M_L \\ |\phi(P)|_w >1 }} n_w \log |\phi f_{i,L}^n(P)|_w - \sum_{\substack{w \in M_L \\ |\phi(P)|_w >1 }} n_w \log |\phi(P)|_w \\
	& \leq & \left([L:K] \sum_{v \in M_K} n_v c_{1,v} \right) - [L:\Q] h(\phi(P)). 
\end{eqnarray*}
Hence, we obtain 
\begin{eqnarray*}
	0 & \leq & [L:K] \sum_{v \in M_K} n_v c_{1,v} - [L:\Q] h(\phi(P)) + [L:K] n \sum_{v \in M_K^{\infty}} n_v c_{0,v},
\end{eqnarray*}
which is equivalent to 
\[
	h (\phi(P)) \leq \frac{1}{[K : \Q]}\sum_{v \in M_K} n_v (c_{1,v} + n c_{0,v}).
\]
We thus obtained a bound on $h (\phi(P))$ independent on the choice of $(L,S_L)$ satisfying the Runge condition, and together with the bound on the degree 
\[
	[L : \Q] \leq 2 |S_L| < 2 r_L \leq 2 r,
\]
we get the finiteness.
\end{proof}

\section{The main result : tubular Runge theorem}
\label{sectiontubularRunge}

We will now present our version of Runge theorem with tubular neighbourhoods, which 
generalises Theorem 4 $(b)$ and $(c)$ of \cite{Levin08}. As its complete formulation is quite lengthy, we indicated the different hypotheses by the letter $H$ and the results by the letter $R$ to simplify the explanation of all parts afterwards. The key condition for integral points generalising Runge condition of Proposition \ref{propBombieri} is indicated by the letters TRC.

We recall that the crucial notion of tubular neighbourhood is explained in Definitions \ref{defvoistub} and \ref{defhorsdunvoistub}, and we advise the reader to look at the simplified version of this theorem stated in the Introduction to get more insight if necessary.

\begin{thm}[Tubular Runge theorem]	
\hspace*{\fill}
\label{thmRungetubulaire}

\textbf{(H0)} Let $K$ be a number field, $S_0$ a set of places of $K$ containing $M_K^{\infty}$ and $\Ocal$ the integral closure of $\Ocal_{K,S_0}$ in some finite Galois extension $K'$ of $K$.

\textbf{(H1)} Let $\Xcal$ be a normal projective scheme over $\Ocal_{K,S_0}$ and $D_1, \cdots, D_r$ be effective Cartier divisors on $\Xcal_\Ocal = \Xcal \times_{\Ocal_{K,S_0}} \Ocal$ such that $D_\Ocal= \bigcup_{i=1}^r D_i$ is the scalar extension to $\Ocal$ of some Cartier divisor $D$ on $\Xcal$, and that $\Gal(K'/K)$ permutes the generic fibers $(D_i)_{K'}$. For every extension $L/K$, we denote by $r_L$ the number of orbits of $(D_1)_{K'}, \cdots, (D_r)_{K'}$ for the action of $\Gal(K'L/L)$.

\textbf{(H2)} Let $Y$ be a closed sub-$K$-scheme of $\Xcal_K$ and $\Vcal$ be a tubular neighbourhood of $Y$ in $\Xcal_K$. Let $m_Y \in \N$ be the minimal number such that the intersection of any $(m_Y+1)$ of the divisors $(D_i)_{K'}$ amongst the $r$ possible ones is included in $Y_{K'}$. 

\textbf{(TRC)} The \textbf{tubular Runge condition} for a pair $(L,S_L)$, where $L/K$ is finite and $S_L$ contains all the places above $S_0$, is 
\[
	m_Y |S_L| < r_L.
\]

Under these hypotheses and notations, the results are the following :

\textbf{(R1)} If $(D_1)_{K'}, \cdots , (D_r)_{K'}$ are ample divisors, the set 
\begin{equation}
\label{eqensfiniRungetubample}
	\bigcup_{(L,S_L)} \{P \in (\Xcal \backslash D) (\Ocal_{L,S_L}) \, | \, P \notin \Vcal \},
\end{equation}
where $(L,S_L)$ goes through all the pairs satisfying the tubular Runge condition, is \textbf{finite}.

\textbf{(R2)} If $(D_1)_{K'}, \cdots, (D_r)_{K'}$ are big divisors, there exists a proper closed subset $Z_{K'}$ of $\Xcal_{K'}$ such that the set 
\[
	\left( \bigcup_{(L,S_L)} \{P \in (\Xcal \backslash D) (\Ocal_{L,S_L}) \, | \, P \notin \Vcal \} \right) \backslash Z_{K'} (\Kb),
\]
where $(L,S_L)$ goes through all the pairs satisfying the tubular Runge condition, is \textbf{finite}.
\end{thm}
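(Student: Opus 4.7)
The plan is to extend Proposition~\ref{propBombieri} to the tubular setting by combining Proposition~\ref{propcle}(b) with a Galois-orbit pigeonhole: the upgrade from $|S_L|<r_L$ to $m_Y|S_L|<r_L$ reflects that outside $\Vcal$, at most $m_Y$ of the $r$ divisors can be $w$-adically close to $P$ at any single place $w$ (rather than at most one, as for curves). Once a $\Gal(K'L/L)$-orbit of divisors is shown to be uniformly far from $P$ at every place, Proposition~\ref{propreductionamplegros} gives an absolute bound on a Weil height of $P$ under an associated embedding, and Northcott concludes.

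After base change to $M=LK'$, pick Galois-equivariant embeddings $\psi_i\colon\Xcal_{K'}\to\P^{n_i}_{K'}$ via Proposition~\ref{propreductionamplegros}(a): each $\psi_i^{-1}\{x_0=0\}$ equals $(D_i)_{K'}$, and $\psi_{\sigma i}=\sigma\psi_i$ for $\sigma\in\Gal(K'/K)$. Let $\phi_{i,j}=(x_j/x_0)\circ\psi_i$. For every $(m_Y+1)$-subset $I\subset\{1,\dots,r\}$ and every tuple $(j_i)_{i\in I}$, the common support of poles of $(\phi_{i,j_i})_{i\in I}$ lies in $\bigcap_{i\in I}(D_i)_{K'}\subset Y_{K'}$ by~(H2). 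Applying Proposition~\ref{propcle}(b) to each such finite family and to $\Vcal$, and taking the maximum of the finitely many resulting $M_K$-constants, yields a single $M_K$-constant $\Ccal^{*}=(c^{*}_v)$ such that, for every $P\notin\Vcal$ and every place $w$ above $v$, an elementary pigeonhole (if $m_Y+1$ indices had $\max_j\log|\phi_{i,j}(P)|_w>c^{*}_v$, select a maximizing $j_i$ for each to get a contradiction) yields
\[
B(P,w):=\{\,i:\max_j\log|\phi_{i,j}(P)|_w>c^{*}_v\,\}\quad\text{with}\quad|B(P,w)|\leq m_Y.
\]

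Fix $P\in(\Xcal\setminus D)(\Ocal_{L,S_L})\setminus\Vcal$ for some $(L,S_L)$ satisfying (TRC). Since $S_L$ contains all places above $S_0$, Proposition~\ref{propreductionamplegros}(a) ensures, after enlarging $\Ccal^{*}$ by one further $M_K$-constant, that $B(P,w)=\emptyset$ at every place $w$ of $M$ above $v\notin S_L$. The Galois-equivariant choice of the $\psi_i$ gives $B(P,\sigma w)=\sigma B(P,w)$, so for each fixed $v_L\in S_L$ the set $\bigcup_{w\mid v_L}B(P,w)$ is a union of at most $m_Y$ full $\Gal(M/L)$-orbits. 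Summing over $v_L\in S_L$ yields at most $m_Y|S_L|<r_L$ ``touched'' orbits by (TRC), so some orbit $O$ is completely untouched: $\max_j\log|\phi_{i,j}(P)|_w\leq c^{*}_v$ for every $i\in O$ and every place $w$ of $M$.

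To conclude, $D_O:=\sum_{i\in O}(D_i)_{K'}$ is $\Gal(M/L)$-invariant and ample (sum of ample divisors), hence descends to an ample divisor on $\Xcal_L$; pick an associated projective embedding $\psi_O$. For each $i\in O$ and each $w$ one has $\log\|x_{\psi_i(P)}\|_w\leq\log|x_{\psi_i(P),0}|_w+c^{*}_v$; weighting by $n_w$, summing, and using the product formula on $x_{\psi_i(P),0}\in M^{\times}$ together with \eqref{eqineqinductionMKconstante} bounds $h(\psi_i(P))$ by an absolute constant, and functoriality of Weil heights then bounds $h(\psi_O(P))$ uniformly. Combined with $[L:\Q]\leq 2|S_L|<2r/m_Y$ from (TRC), Northcott gives finitely many $\psi_O(P)$ per orbit $O$, hence finitely many $P$ in all, proving~(R1). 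For (R2), replace Proposition~\ref{propreductionamplegros}(a) by (b): the $\psi_i$ and $\psi_O$ are closed immersions only outside proper closed subsets $Z_i,Z_O\subset\Xcal_{K'}$, and the same argument works outside $Z_{K'}:=\bigcup_{O}\bigl(Z_O\cup\bigcup_{i\in O}Z_i\bigr)$. The main obstacle throughout is ensuring uniformity of all $M_K$-constants across all $(L,S_L)$, which is built into Proposition~\ref{propcle} (outputting constants over $K$ itself) and into the Galois-equivariant construction of the $\psi_i$.
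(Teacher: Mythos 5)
Your proof is correct and follows essentially the same route as the paper: Proposition \ref{propreductionamplegros} for the integrality at places outside $S_L$, Proposition \ref{propcle}$(b)$ with the tubular neighbourhood to show at most $m_Y$ divisors can be $w$-adically close to $P$ at any place, the pigeonhole under \textbf{(TRC)} to isolate an untouched Galois orbit, and a height bound plus Northcott to conclude. The only difference is bookkeeping: you take one embedding per individual divisor over $K'$ and enforce Galois equivariance explicitly to pass to $\Gal(K'L/L)$-orbits, whereas the paper directly attaches one embedding to each orbit-sum over every intermediate subfield $K''$ of $K'/K$ (which also makes your final detour through $\psi_O$ unnecessary, since bounding $h(\psi_i(P))$ for a single $i$ in the untouched orbit already suffices).
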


We separated the comments about Theorem \ref{thmRungetubulaire} in two remarks below : the first one explains its hypotheses and results, the second compares it with other theorems.

\begin{rem}
\hspace*{\fill}
\label{remRungetubulaire}

$(a)$ 
	The need for the extensions of scalars to $K'$ and $\Ocal$ in \textit{\textbf{(H0)}} and \textit{\textbf{(H1)}} is the analogue of the fact that the poles of $\phi$ are not necessarily $K$-rational in the case of curves, hence the assumption that the $(D_i)_{K'}$ are all conjugates by $\Gal(K'/K)$ and the definition of $r_L$ given in \textit{\textbf{(H1)}}. It will induce technical additions of the same flavour as the auxiliary functions $f_{Q,Q',L}$ in the proof of Bombieri's theorem (Proposition \ref{propBombieri}).
	
$(b)$ The motivation for the tubular Runge condition is the following : imitating the principle of proof for curves (Remark \ref{remRungecourbes} $(b)$), if $P \in (\Xcal \backslash D) (\Ocal_{L,S_L})$, we can say that at the places $w$ of $M_L \backslash S_L$, this point is ``$w$-adically far'' from $D$. Now, the divisors $(D_1)_{K'}, \cdots, (D_r)_{K'}$ can intersect (which does not happen for distinct points on curves), so for $w \in S_L$, this point $P$ can be ``$w$-adically close'' to many divisors at the same time. More precisely, it can be ``$w$-adically close'' to at most $m$ such divisors, where $m=m_{\emptyset}$, i.e. the largest number such that there are $m$ divisors among $D_1, \cdots, D_r$ whose set-theoretic intersection is nonempty. This number is also defined in \cite{Levin08} but we found that for our applications, it often makes Runge condition too strict. Therefore, we allow the use of the closed subset $Y$ in \textbf{\textit{(H2)}}, and if we assume that our point $P$ is never too close to $Y$ (i.e. $P \notin \Vcal$), this $m$ goes down to $m_Y$ by definition.  Thus, we only need to take out $m_Y$ divisors by place $w$ in $S_L$, hence the tubular Runge condition $m_Y |S_L|< r_L$. Actually, one can even mix the Runge conditions, i.e. assume that $P$ is close to $Y$ exactly at $s_1$ places, and close from one of the divisors (but not $Y$) at $s_2$ places : following along the lines of the proof below, we obtain finiteness given the Runge condition $s_1 m_{\emptyset} + s_2 m_Y < r_L$.

$(c)$ The last main difference with the case of curves is the assumption of ample or big divisors, respectively in \textit{\textbf{(R1)}} and \textit{\textbf{(R2)}}. In both cases, such an assumption is necessary twice. First, we need it to translate by Proposition \ref{propreductionamplegros} the integrality condition on schemes to an integrality expression on auxiliary functions (such as in section 2 of \cite{Levin08}) to use the machinery of $M_K$-constants and the key result (Proposition \ref{propcle}). Then, we need it to ensure that after obtaining a bound on the heights associated to the divisors, it implies finiteness (implicit in Proposition \ref{propreductionamplegros}, see also Remark \ref{rempropamplegros} $(a)$).
\end{rem}

\begin{rem}
\hspace*{\fill}
\label{remcomparaisonCLZetstratification}

$(a)$ This theorem has some resemblance to Theorem CLZ of \cite{CorvajaLevinZannier} (where our closed subset $Y$ would be the analogue of the $\Ycal$ in that article), let us point out the differences. In Theorem CLZ, there is no hypothesis of the set of places $S_L$, no additional hypothesis of integrality (appearing for us under the form of a tubular neighbourhood), and the divisors are assumed to be normal crossing divisors, which is replaced in our case by the tubular Runge condition. As for the results themselves, the finiteness formulated by CLZ depends on the set $S_L$ (that is, it is not clear how it would prove such an union of sets such as in our Theorem is finite). Finally, the techniques employed are greatly different : Theorem CLZ uses Schmidt's subspace theorem which is noneffective, whereas our method can be made effective if one knows the involved auxiliary functions. It might be possible (and worthy of interest) to build some bridges between the two results, and the techniques involved.

$(b)$ Theorem \ref{thmRungetubulaire} can be seen as a stratification of Runge-like results depending on the dimension of the intersection of the involved divisors : at one extreme, the intersection is empty, and we get back Theorem 4 $(b)$ and $(c)$ of \cite{Levin08}. At the other extreme, the intersection is a divisor (ample or big), and the finiteness is automatic by the hypothesis for points not belonging in the tubular neighbourhood (see Remark \ref{remhorsvoistub}). Of course, this stratification is not relevant in the case of curves. In another perspective, for a fixed closed subset $Y$, Theorem \ref{thmRungetubulaire} is more a concentration result of integral points than a finiteness result, as it means that even if we choose a tubular neighbourhood $\Vcal$ of $Y$ as small as possible around $Y$, there is only a finite number of integral points in the set \eqref{eqensfiniRungetubample}, i.e. these integral points (ignoring the hypothese $P \notin \Vcal$) must concentrate around $Y$ (at least at one of the places $w \in M_L$). Specific examples will be given in section \ref{sectionapplicationsSiegel} and \ref{sectionexplicitRunge}.

\end{rem}

Let us now prove Theorem \ref{thmRungetubulaire}, following the ideas outlined in Remark \ref{remRungetubulaire}.

\begin{proof}[Proof of Theorem \ref{thmRungetubulaire}]
\hspace*{\fill}
	
\textit{\textbf{(R1)}} Let us first build the embeddings we need. For every subextension $K''$ of $K'/K$, the action of $\Gal(K'/K'')$ on the divisors $(D_1)_{K'}, \cdots, (D_r)_{K'}$ has orbits denoted by $O_{K'',1}, \cdots, O_{K'',r_{K''}}$. Notice that any $m_Y+1$ such orbits still have their global intersection included in $Y$ : regrouping the divisors by orbits does not change this fact.

For each such orbit, the sum of its divisors is ample by hypothesis and coming from an effective Cartier divisor on $\Xcal_{K''}$, hence one can choose by Proposition \ref{propreductionamplegros} an appropriate embedding $\psi_{K'',i} : \Xcal_{K''} \rightarrow \P^{n_i}_{K''}$, whose coordinates functions (denoted by $\phi_{K'',i,j} = (x_j/x_0) \circ \psi_{K'',i} (1 \leq j \leq n_i)$) are small on integral points of $(\Xcal_\Ocal \backslash O_{K'',i})$. We will denote by $\Ccal_0$ the maximum of the (induced) $M_K$-constants obtained for by the Proposition \ref{propreductionamplegros} for all possible $K''/K$ and orbits $O_{K'',i} (1 \leq i \leq r_{K''})$. The important point of this is that for any extension $L/K$, any $v \in M_K \backslash S_0$, any place $w \in M_L$ above $v$ and any $P \in (\Xcal \backslash D) (\Ocal_{L,w})$, choosing $L'=K' \cap L$, one has 
\begin{equation}
\label{eqinterRungetousplongements}
	\max_{\substack{1 \leq i \leq r_L \\ 1 \leq j \leq n_i}} \log |\phi_{L',i,j} (P)|_w \leq c_{0,v}.
\end{equation}

This is the first step to obtain a bound on the height of one of the $\psi_{K'',i} (P)$. For fixed $P$, we only have to do so for one of the $i \in \{1, \cdots, r_L \}$ as long as the bound is uniform in the choice of $(L,S_L)$ (and $P$), to obtain finiteness as each $\psi_{K'',i}$ is an embedding. To this end, one only needs to bound the coordinate functions on the places $w$ of $M_L \backslash S_L$, which is what we will do now.

For a subextension $K''$ of $K'/K$ again, by hypothesis \textbf{\textit{(H2)}} (and especially the definition of $m_Y$), taking any set $\Ical$ of $m_Y+1$ couples $(i,j), 1 \leq i \leq r_{K''}, j \in \{1, \cdots, n_i\}$ with $m_Y+1$ different indices $i$ and considering the rational functions $\phi_{K'',i,j}, (i,j) \in \Ical$, whose common poles are included in $Y$ by hypothesis, we can apply Proposition \ref{propcle} to these functions and the tubular neighbourhood $\Vcal = (V_w)_{w \in M_{\Kb}}$. Naming as $\Ccal_1$ the maximum of the (induced) obtained $M_K$-constants (also for all the possible $K''$), we just proved that for every subextension $K''$ of $K'/K$, every place $w \in M_{\Kb}$ (above $v \in M_K$) and any $P \in \Xcal(\Kb) \backslash V_w$, the inequality 
\begin{equation}
\label{eqineqfaussepourauplusmY}
	\max_{1 \leq j \leq n_i} \log |\phi_{K'',i,j} (P)|_w \leq c_{1,v}
\end{equation}
is true except for at most $m_Y$ different indices $i \in \{1, \cdots, r_{K''} \}$.

Now, let us consider $(L,S_L)$ a pair satisfying tubular Runge condition $m_Y |S_L| < r_L$ and denote $L' = K' \cap L$ again. For $P \in (\Xcal \backslash D) (\Ocal_{L,S_L})$ not belonging to $\Vcal$, by \eqref{eqinterRungetousplongements},  \eqref{eqineqfaussepourauplusmY} and tubular Runge condition, there remains an index $i \in \{1, \cdots, r_L\}$ (dependent on $P$) such that 
\[
	\forall w \in M_L, \quad \max_{1 \leq j \leq n_i} \log |\phi_{L',i,j} (P)|_w \leq \max(c_{0,v},c_{1,v}) \quad (w | v \in M_K).
\]
This gives immediately a bound on the height of $\psi_{L',i}(P)$ independent of the choice of pair $(L,S_L)$ (except the fact that $L' = K' \cap L$) and this morphism is an embedding, hence the finiteness of the set of points 
\[
	\bigcup_{(L,S_L)} \{P \in (\Xcal \backslash D) (\Ocal_{L,S_L}) \, | \, P \notin \Vcal \},
\]
where $(L,S_L)$ goes through all the pairs satisfying tubular Runge condition, because $[L:\Q]$ is also bounded by this condition.

\textit{\textbf{(R2)}}
 
The proof is the same as for \textit{\textbf{(R1)}} except that we have to exclude a closed subset of $\Xcal_{K'}$ for every big divisor involved, and their reunion will be denoted by $Z_{K'}$. The arguments above hold for every point $P \notin Z_{K'} (\Kb)$ (both for the expression of integrality by auxiliary functions, and for the conclusion and finiteness outside of this closed subset), using again Propositions \ref{propreductionamplegros} and \ref{propcle}.

\end{proof}

\section{Reminders on Siegel modular varieties}
\label{sectionrappelsSiegel}

In this section, we recall the classical constructions and results for the Siegel modular varieties, parametrising principally polarised abelian varieties with a level structure. Most of those results are extracted (or easily deduced) from these general references : Chapter V of \cite{CornellSilvermanArithmeticGeometry} for the basic notions on abelian varieties, \cite{Debarre99} for the complex tori, their line bundles, theta functions and moduli spaces, Chapter II of \cite{MumfordTata} for the classical complex theta functions and \cite{MumfordTataII} for their links with theta divisors, and Chapter V of \cite{ChaiFaltings} for abelian schemes and their moduli spaces. 

Unless specified, all the vectors of $\Z^g, \R^g$ and $\C^g$ are assumed to be row  vectors.

\subsection{Abelian varieties and Siegel modular varieties}
\label{subsecabvarSiegelmodvar}

\begin{defi}[Abelian varieties and polarisation]
\hspace*{\fill}
\label{defibaseabvar}
\begin{itemize}
\item[$\bullet$] An \textit{abelian variety} $A$ over a field $k$ is a projective algebraic group over $k$. Each abelian variety $A_{/k}$ has a dual abelian variety denoted by $\widehat{A} = \Pic^0 (A/k)$ (\cite{CornellSilvermanArithmeticGeometry}, section V.9).

\item[$\bullet$] A \textit{principal polarisation} is an isomorphism $\lambda : A \rightarrow \widehat{A}$ such that there exists a line bundle $L$ on $A_{\overline{k}}$ with $\dim H^0(A_{\overline{k}},L)=1$ and $\lambda$ is the morphism
\[
	\fonction{\lambda}{A_{\overline{k}}}{\widehat{A_{\overline{k}}}}{x}{T_x^* L \otimes L^{-1}}
\]
 (\cite{CornellSilvermanArithmeticGeometry}, section V.13).

\item[$\bullet$] Given a pair $(A,\lambda)$, for every $n \geq 1$ prime to $\carac(k)$, we can define the \textit{Weil pairing}
\[
	A[n] \times A[n] \rightarrow \mu_n (\overline{k}),
\]
where $A[n]$ is the $n$-torsion of $A(\overline{k})$ and $\mu_n$ the group of $n$-th roots of unity in $\overline{k}$. It is alternate and nondegenerate (\cite{CornellSilvermanArithmeticGeometry}, section V.16). 

\item[$\bullet$] Given a pair $(A,\lambda)$, for $n \geq 1$ prime to $\carac(k)$, a \textit{symplectic level $n$ structure on }$A[n]$ is a basis $\alpha_n$ of $A[n]$ in which the matrix of the Weil pairing is 
\[
	J = \begin{pmatrix} 0 & I_g \\ 
		- I_g & 0
	\end{pmatrix}.
\]

\item[$\bullet$] Two triples $(A,\lambda,\alpha_n)$ and $(A',\lambda',\alpha'_n)$ of principally polarised abelian varieties over $K$ with level $n$-structures are \textit{isomorphic} if there is an isomorphism of abelian varieties $\phi : A \rightarrow A'$ such that $\phi^* \lambda' = \lambda$ and $\phi^* \alpha'_n = \alpha_n$.

\end{itemize}	
\end{defi}

In the case of complex abelian varieties, the previous definitions can be made more explicit.

\begin{defi}[Complex abelian varieties and symplectic group]
\hspace*{\fill}
\label{deficomplexabvar}

Let $g \geq 1$.

\begin{itemize}
\item[$\bullet$]  The \textit{half-superior Siegel space of order }$g$, denoted by $\Hcal_g$, is the set of matrices 
\begin{equation}
\label{eqdefdemiespaceSiegel}
\Hcal_g := \{ \tau \in M_g (\C) \, | \, {}^t \tau = \tau \, \, \textrm{and} \, \, \Im \tau >0 \},
\end{equation}
where $\Im \tau >0$ means that this symmetric matrix of $M_g (\R)$ is positive definite. This space is an open subset of $M_g(\C)$.

\item[$\bullet$] For any $\tau \in \Hcal_g$, we define 
\begin{equation}
\Lambda_\tau := \Z^g + \Z^g \tau \quad \textrm{and} \quad A_\tau := \C^g / \Lambda_\tau.
\end{equation}
Let $L_\tau$ be the line bundle on $A_\tau$ made up as the quotient of $\C^g \times \C$ by the action of $\Lambda_\tau$ defined by 
\begin{equation}
\label{eqdeffibresurAtau}
\forall p,q \in \Z^g, \quad (p \tau + q) \cdot (z,t) = \left(z+p \tau + q, e^{ - i \pi p \tau {}^tp - 2 i \pi p {}^t z} t \right).
\end{equation}
Then, $L_\tau$ is an an ample line bundle on $A_\tau$ such that $\dim H^0 (A_\tau,L_\tau)=1$, hence $A_\tau$ is a complex abelian variety and $L_\tau$ induces a principal polarisation denoted by $\lambda_\tau$ on $A_\tau$ (see for example \cite{Debarre99}, Theorem VI.1.3). We also denote by $\pi_\tau : \C^g \rightarrow A_\tau$ the quotient morphism.

\item[$\bullet$] For every $n \geq 1$, the Weil pairing $w_{\tau,n}$ associated to $(A_\tau,\lambda_\tau)$ on $A_\tau[n]$ is defined by
\[
	\fonction{w_{\tau,n}}{A_\tau[n] \times A_\tau[n]}{\mu_n (\C)}{(\overline{x},\overline{y})}{e^{ 2 i \pi n w_\tau(x,y)}},
\]
where $x,y \in \C^g$ have images $\overline{x}, \overline{y}$ by $\pi_\tau$, and $w_\tau$ is the $\R$-bilinear form on $\C^g \times \C^g$ (so that $w_\tau(\Lambda_\tau \times \Lambda_\tau) = \Z$) defined by 
\[
	w_\tau (x,y) := \Re(x) \cdot \Im (\tau)^{-1} \cdot {}^t \Im(y) - \Re(y) \cdot \Im (\tau)^{-1} \cdot  {}^t \Im(x)
\]
(also readily checked by making explicit the construction of the Weil pairing).

\item[$\bullet$] Let $(e_1, \cdots, e_g)$ be the canonical basis of $\C^g$. The family 
\begin{equation}
	\label{eqdefalphataun}
	(\pi_\tau(e_1/n), \cdots, \pi_\tau(e_g/n), \pi_\tau(e_1 \cdot \tau/n), \cdots, \pi_\tau(e_g \cdot \tau/n))
\end{equation}
is a symplectic level $n$ structure on $(A_\tau, \lambda_\tau)$, denoted by $\alpha_{\tau,n}$.

\item[$\bullet$] Let $J = \begin{pmatrix} 0 & 1 \\ -1 & 0 \end{pmatrix} \in M_{2g}(\Z)$. For any commutative ring $A$, the \textit{symplectic group of order $g$ over $A$}, denoted by $\Sp_{2g} (A)$, is the subgroup of $\GL_{2g}(A)$ defined by 
\begin{equation}
\label{eqdefgroupesymplectique}
\Sp_{2g} (A) := \{ M \in \GL_{2g} (A) \, \, | \, \, {}^t M J M = J \}, \qquad J:= \begin{pmatrix} 0 & I_g \\- I_g & 0 \end{pmatrix}.
\end{equation}

For every $n \geq 1$, the \textit{symplectic principal subgroup of degree $g$ and level $n$}, denoted by $\Gamma_g(n)$, is the subgroup of $\Sp_{2g} (\Z)$ made up by the matrices congruent to $I_{2g}$ modulo $n$. For every $\gamma = \begin{pmatrix}A & B \\ C & D \end{pmatrix} \in \Sp_{2g} (\R)$ and every $\tau \in \Hcal_g$, we define 
\begin{equation}
\label{eqdefjetactionsymplectique}
j_\gamma (\tau) = C \tau + D \in \GL_g (\C), \quad \textrm{and} \quad \gamma \cdot \tau = (A \tau + B)(C \tau + D)^{-1},
\end{equation}
which defines a left action by biholomorphisms of $\Sp_{2g} (\R)$ on $\Hcal_g$, and $(\gamma,\tau) \mapsto j_\gamma(\tau)$ is a left cocycle for this action (\cite{Klingen}, Proposition I.1).

\item[$\bullet$] For every $g \geq 2$, $n \geq 1$ and $k \geq 1$, a \textit{Siegel modular form of degree $g$, level $n$ and weight $k$} is an holomorphic function $f$ on $\Hcal_g$ such that 
\begin{equation}
\label{eqdefSiegelmodularform}
	\forall \gamma \in \Gamma_g(n), \quad f (\gamma \cdot z) = \det(j_\gamma(z))^k f(z).
\end{equation}

\end{itemize}

\end{defi}

The reason for this seemingly partial description of the complex abelian varieties is that the $(A_\tau,\lambda_\tau)$ described above actually make up all the principally polarised complex abelian varieties up to isomorphism. The following results can be found in Chapter VI of \cite{Debarre99} except the last point which is straightforward.

\begin{defiprop}[Uniformisation of complex abelian varieties]
\hspace*{\fill}
\label{defipropuniformcomplexabvar}
\begin{itemize}
	\item[$\bullet$] Every principally polarised complex abelian variety of dimension $g$ with symplectic structure of level $n$ is isomorphic to some triple $(A_\tau, \lambda_\tau,\alpha_{\tau,n})$ where $\tau \in \Hcal_g$.
	
	\item[$\bullet$] For every $n \geq 1$, two triples $(A_\tau,\lambda_\tau,\alpha_{\tau,n})$ and $(A_{\tau'},\lambda_{\tau'},\alpha_{\tau',n})$ are isomorphic if and only if there exists $\gamma \in \Gamma_g(n)$ such that $\gamma \cdot \tau = \tau'$, and then such an isomorphism is given by 
	\[
		\fonctionsansnom{A_\tau}{A_{\tau'}}{z \! \mod \Lambda_\tau}{z \! \cdot j_\gamma(\tau)^{-1} \mod \Lambda_{\tau'}}.
	\]
	
	\item[$\bullet$] The \textit{Siegel modular variety of degree $g$ and level $n$} is the quotient $A_g(n)_\C := \Gamma_g(n) \backslash \Hcal_g$. From the previous result, it is the moduli space of principally polarised complex abelian varieties of dimension $g$ with a symplectic level $n$ structure. As a quotient, it also inherits a structure of normal analytic space (with finite quotient singularities) of dimension $g(g+1)/2$, because $\Gamma_g(n)$ acts properly discontinuously on $\Hcal_g$.
	
	\item[$\bullet$] For every positive divisor $m$ of $n$, the natural morphism $A_g(n)_\C \rightarrow A_g(m)_\C$ induced by the identity of $\Hcal_g$ corresponds in terms of moduli to multiplying the symplectic basis $\alpha_{\tau,n}$ by $n/m$, thus obtaining $\alpha_{\tau,m}$.
	
	\item[$\bullet$] For every $g \geq 1$ and $n \geq 1$, the quotient of $\Hcal_g \times \C$ by the action of $\Gamma_g(n)$ defined as
	\begin{equation}
		\label{eqdeffibreL}
		\gamma \cdot (\tau,t) = (\gamma \cdot \tau, t / \det( j_\gamma(z))) 
	\end{equation}
	is a variety over $\Hcal_g$ denoted by $L$. For a large enough power of $k$ (or if $n \geq 3$), $L^{\otimes k}$ is a line bundle over $A_g(n)_\C$, hence $L$ is a $\Q$-line bundle over $A_g(n)_\C$ called \textit{line bundle of modular forms of weight one} over $A_g(n)_\C$. By definition \eqref{eqdefSiegelmodularform}, for every $k \geq 1$, the global sections of $L^{\otimes k}$ are the Siegel modular forms of degree $g$, level $n$ and weight $k$.

\end{itemize}
\end{defiprop}

Let us now present the compactification of $A_g(n)_\C$ we will use, that is the Satake compactification (for a complete description of it, see section 3 of \cite{Namikawa}).

\begin{defiprop}[Satake compactification]
\hspace*{\fill}
\label{defipropSatakecompactification}

Let $g \geq 1$ and $n \geq 1$.  The normal analytic space $A_g(n)_\C$ admits a compactification called \textit{Satake compactification} and denoted by $A_g(n)^S_\C$, satisfying the following properties.

$(a)$ $A_g(n)^S_\C$ is a compact normal analytic space (of dimension $g(g+1)/2$, with finite quotient singularities) containing $A_g(n)_\C$ as an open subset and the boundary $\partial A_g(n)_\C := A_g(n)^S_\C \backslash A_g(n)_\C$ is of codimension $g$ (see \cite{CartanSatake57} for details).

$(b)$ As a normal analytic space, $A_g(n)^S_\C$ is a projective algebraic variety. More precisely, for ${\textrm{M}}_g(n)$ the graded ring of Siegel modular forms of degree $g$ and level $n$, $A_g(n)^S_\C$ is canonically isomorphic to $\Proj_\C {\textrm{M}}_g (n)$ (\cite{CartanPlongements57}, ``théorème fondamental'').

In particular, one can obtain naturally $A_g(n)^S_\C$ by fixing for some large enough weight $k$ a basis of modular forms of ${\textrm{M}}_g(n)$ of weight $k$ and evaluating them all on $A_g(n)_\C$ to embed it in a projective space, so that $A_g(n)^S_\C$ is the closure of the image of the embedding in this projective space.

$(c)$ The $\Q$-line bundle $L$ of modular forms of weight 1 on $A_g(n)_\C$ extends naturally to $A_g(n)^S_\C$ (and is renoted $L$), to an ample $\Q$-line bundle (this is a direct consequence of $(b)$).
\end{defiprop}

\subsection{Further properties of Siegel modular varieties}
\label{subsecfurtherpropSiegelmodvar}
As we are interested in the reduction of abelian varieties on number fields, one needs to have a good model of $A_g(n)_\C$ over integer rings, as well as some knowledge of the geometry of $A_g(n)_\C$. The integral models below and their properties are given in Chapter V of \cite{ChaiFaltings}.

\begin{defi}[Abelian schemes]
\hspace*{\fill}
\label{defabelianscheme}

	$(a)$ An \textit{abelian scheme} $A \rightarrow S$ is a smooth proper group scheme whose fibers are geometrically connected. It also has a natural \textit{dual} abelian scheme $\widehat{A} = \Pic^0 (A/S)$, and it is \textit{principally  polarised} if it is endowed with an isomorphism $\lambda : A \rightarrow \widehat{A}$ such that at every geometric point $\overline{s}$ of $S$, the induced isomorphism $\lambda_{\overline{s}} : A_{\overline{s}} \rightarrow \widehat{A}_{\overline{s}}$ is a principal polarisation of $A_{\overline{s}}$.
	
	$(b)$ A \textit{symplectic structure of level $n \geq 1$} on a principally polarised abelian scheme $(A,\lambda)$ over  a $\Z[\zeta_n,1/n]$-scheme $S$ is the datum of an isomorphism of group schemes $A[n] \rightarrow (\Z/n\Z)^{2g}$, which is symplectic with respect to $\lambda$ and the canonical pairing on $(\Z/n\Z)^{2g}$ given by the matrix $J$ (as in \eqref{eqdefgroupesymplectique}).
\end{defi}

\begin{defiprop}[Algebraic moduli spaces]
\hspace*{\fill}
\label{defipropalgmodulispaces}

	For every integers $g \geq 1$ and $n \geq 1$ : 
	
	$(a)$ The Satake compactification $A_g(n)^S_\C$ has an integral model $\Acal_g (n)^S$ on $\Z[\zeta_n, 1/n]$ which contains as a dense open subscheme the (coarse, if $n \leq 2$) moduli space $\Acal_g(n)$ on $\Z[\zeta_n,1/n]$ of principally polarised abelian schemes of dimension $g$ with a symplectic structure of level $n$. This scheme $\Acal_g(n)^S$ is normal, proper and of finite type on $\Z[\zeta_n,1/n]$ (\cite{ChaiFaltings}, Theorem V.2.5).

	$(b)$ For every divisor $m$ of $n$, we have canonical degeneracy morphisms $\Acal_g(n)^S \rightarrow \Acal_g(m)^S$ extending the morphisms of Definition \ref{defipropuniformcomplexabvar}.
	
\end{defiprop}

Before tackling our own problem, let us give some context on the divisors on $A_g(n)^S_\C$ to give a taste of the difficulties to overcome.

\begin{defi}[Rational Picard group]
\hspace*{\fill}

For every normal algebraic variety $X$ on a field $K$, the \textit{rational Picard group} of $X$ is the $\Q$-vector space 
\[
\Pic(X)_\Q := \Pic(X) \otimes_\Z \Q.
\]
\end{defi}

\begin{prop}[Rational Picard groups of Siegel modular varieties]
\hspace*{\fill}
\label{proprationalPicardSiegel}

Let $g \geq 2$ and $n \geq 1$.

$(a)$ Every Weil divisor on $A_g(n)_\C$ or $A_g(n)^S_\C$ is up to some multiple a Cartier divisor, hence their rational Picard group is also their Weil class divisor group tensored by $\Q$.

$(b)$ For $g=3$, the Picard rational groups of $A_3(n)^S_\C$ and $A_3(n)_\C$ are equal to $\Q \cdot L$ for every $n \geq 1$.

$(c)$ For $g=2$, one has $\Pic_\Q (A_2(1)^S_\C) = \Q \cdot L$.
	
\end{prop}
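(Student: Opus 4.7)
The plan is to handle the three parts in order, with (a) feeding into (b) and (c).

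For (a), I would exploit the fact, already recorded in Definition-Proposition \ref{defipropSatakecompactification}, that $A_g(n)^S_\C$ (hence also $A_g(n)_\C$) has only finite quotient singularities, coming from the proper discontinuous action of $\Gamma_g(n)$ on the smooth analytic space $\Hcal_g$. A normal analytic variety with only finite quotient singularities is $\Q$-factorial: locally around any point it is analytically isomorphic to a quotient $U/G$ of a smooth $U$ by a finite group $G$, and if $D$ is any Weil divisor on $U/G$, then $\pi^*D$ is Cartier on the smooth variety $U$ and the push-forward $\pi_*\pi^*D = |G|\cdot D$ is Cartier on $U/G$. Thus some multiple of $D$ is Cartier, proving (a) and identifying $\Pic_\Q$ with the rationalized divisor class group throughout.

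For (b), I would proceed in two independent steps. First, I apply the excision sequence for divisor class groups: if $Z$ is a closed subscheme of codimension $\geq 2$ in a normal variety $X$, then $\mathrm{Cl}(X) \to \mathrm{Cl}(X\setminus Z)$ is an isomorphism. Since the boundary $\partial A_g(n)_\C$ has codimension $g$ in $A_g(n)^S_\C$ (Definition-Proposition \ref{defipropSatakecompactification}(a)), taking $g=3$ and rationalising yields, thanks to (a), an isomorphism $\Pic_\Q(A_3(n)^S_\C) \cong \Pic_\Q(A_3(n)_\C)$. Second, I would invoke the classical stable cohomology result, due to Borel (for large $g$), refined by Hain and others, which asserts that for $g \geq 3$ and every level $n\geq 1$ one has $H^2(A_g(n)_\C,\Q) = \Q\cdot L$, whence $\Pic_\Q(A_g(n)_\C) = \Q\cdot L$ via the first Chern class. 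Combining the two identifications yields (b).

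For (c), the strategy is different and uses Igusa's explicit description of the graded ring of Siegel modular forms of degree $2$ and level $1$: $\mathrm{M}_2(1)_\C = \C[\psi_4,\psi_6,\chi_{10},\chi_{12}]$ is a polynomial algebra on four generators of weights $4,6,10,12$. Consequently, Definition-Proposition \ref{defipropSatakecompactification}(b) gives an isomorphism
\[
A_2(1)^S_\C \;\cong\; \Proj \mathrm{M}_2(1)_\C \;\cong\; \P(4,6,10,12),
\]
a weighted projective three-space, whose rational Picard group is $\Q$, generated by $\Ocal(1)$. By construction this generator corresponds to (a rational multiple of) the modular line bundle $L$, so $\Pic_\Q(A_2(1)^S_\C) = \Q\cdot L$.

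The main obstacle is really the input to (b): the equality $\Pic_\Q(A_g(n)_\C) = \Q\cdot L$ for $g\geq 3$ and arbitrary level is not elementary and requires either Borel's stability theorem for the cohomology of arithmetic groups or an automorphic-forms argument. The excision step in (b) and the weighted-projective-space computation in (c) are by comparison routine once (a) is in hand.
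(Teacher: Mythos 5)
Your argument is correct, and parts $(a)$ and $(b)$ follow essentially the same route as the paper: $(a)$ rests on the fact that finite quotient singularities imply $\Q$-factoriality (the paper cites \cite{ArtalBartolo14} for this; note that your one-line justification via $\pi_*\pi^*D=|G|\cdot D$ is the classically ``assumed'' argument whose rigorous treatment is precisely the point of that reference, since the pushforward of a Cartier divisor under a finite quotient map is not automatically Cartier --- one must instead descend the $G$-linearised bundle $\Ocal_U(|G|\,\pi^*D)$), and $(b)$ combines codimension-$\geq 2$ excision with the Borel-type computation of $H^2$ (the paper cites \cite{Borel81} refined by \cite{Weissauer92}). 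Where you genuinely diverge is $(c)$: the paper extracts $\Pic_\Q(A_2(1)_\C)=\Q\cdot L$ from Mumford's computation of the Picard group of a \emph{toroidal} compactification in section III.9 of \cite{Mumford83}, restricting away its irreducible codimension-one boundary and then passing to the Satake compactification by excision; you instead use Igusa's structure theorem to identify $A_2(1)^S_\C$ with the weighted projective space $\P(4,6,10,12)$ and read off $\Pic_\Q=\Q\cdot\Ocal(1)=\Q\cdot L$ directly. Your route is shorter and more self-contained, needing only the elementary fact that weighted projective spaces are $\Q$-factorial with divisor class group $\Z$; the one point to state carefully is that the full graded ring ${\textrm{M}}_2(1)$ is \emph{not} a polynomial algebra (it contains the odd-weight form $\chi_{35}$ with $\chi_{35}^2$ polynomial in $\psi_4,\psi_6,\chi_{10},\chi_{12}$), so you must pass to the even-weight Veronese subring, which does not change the $\Proj$. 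The paper's route, by contrast, yields along the way the finer information about the boundary divisor of the toroidal compactification, which is irrelevant here.
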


This result has the following immediate corollary, because $L$ is ample on $A_g(n)^S_\C$ for every $g \geq 2$ and every $n \geq 1$ (Definition-Proposition \ref{defipropSatakecompactification} $(c)$).

\begin{cor}[Ample and big divisors on Siegel modular varieties]
\hspace*{\fill}

A $\Q$-divisor on $A_g(n)_\C$ or $A_g(n)^S_\C$ with $g=3$ (or $g=2$ and $n=1$) is ample if and only if it is big if and only if it is equivalent to $a \cdot L$ with $a>0$.

\end{cor}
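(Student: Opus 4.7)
The plan is to reduce the entire statement to a one-dimensional Picard-group calculation via Proposition \ref{proprationalPicardSiegel}. First, I would note that for $g \geq 2$ the boundary $\partial A_g(n) = A_g(n)^S_\C \setminus A_g(n)_\C$ has codimension $g \geq 2$, so no prime Weil divisor of $A_g(n)^S_\C$ is supported in the boundary. Combined with the $\Q$-factoriality coming from the finite quotient singularities (which by Proposition \ref{proprationalPicardSiegel} $(a)$ lets us move freely between Weil and Cartier classes after tensoring with $\Q$), this implies that restriction induces an isomorphism $\Pic_\Q(A_g(n)^S_\C) \xrightarrow{\sim} \Pic_\Q(A_g(n)_\C)$. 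Hence the case $g = 2$, $n = 1$ of $A_g(n)_\C$ follows from the case of its Satake compactification treated in Proposition \ref{proprationalPicardSiegel} $(c)$, and we may test ampleness and bigness of any $\Q$-divisor $D$ by passing to its Zariski closure in $A_g(n)^S_\C$. Applying Proposition \ref{proprationalPicardSiegel} $(b)$ or $(c)$ then writes $D = a \cdot L$ in $\Pic_\Q$ for a unique $a \in \Q$, reducing the statement to identifying exactly which $a$ yield ample, resp.\ big, divisors.

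Next, I would prove the easy direction: if $a > 0$, then $aL$ is a positive rational multiple of an ample $\Q$-line bundle (Definition-Proposition \ref{defipropSatakecompactification} $(c)$), hence ample, and in particular big since we work on the projective variety $A_g(n)^S_\C$ of dimension $g(g+1)/2 \geq 3$. This already gives the implications $a > 0 \Rightarrow \text{ample} \Rightarrow \text{big}$.

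For the converse, I would argue that $aL$ big forces $a > 0$. If $a = 0$, then $aL$ is trivial and $h^0(m \cdot aL) = 1$ for every $m$, which does not grow like $m^{\dim A_g(n)^S_\C}$, so $aL$ is not big. If $a < 0$, choose $m \geq 1$ with $maL$ an honest line bundle; then $-maL$ is a positive integer multiple of $L$, hence ample, and admits many nonzero global sections. If $maL$ had a nonzero global section $s$ for some such $m$, and $t$ denoted any nonzero section of $-maL$, then $s \otimes t$ would be a nonzero element of $H^0(\mathcal{O}_{A_g(n)^S_\C}) = \C$, forcing both $s$ and $t$ to be nowhere vanishing, and so trivializing $-maL$, which contradicts $-maL$ being ample and nontrivial. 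Hence $h^0(maL) = 0$ for every $m$ with $maL$ Cartier, and $aL$ cannot be big. The main (and only) obstacle is essentially bookkeeping: confirming the identification of rational Picard groups between $A_g(n)_\C$ and $A_g(n)^S_\C$ so that Proposition \ref{proprationalPicardSiegel} applies uniformly; once this is in place the positivity dichotomy on the one-dimensional cone $\Q \cdot L$ is entirely formal.
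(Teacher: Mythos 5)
Your argument is correct and follows exactly the route the paper intends: the paper states the corollary as an immediate consequence of Proposition \ref{proprationalPicardSiegel} together with the ampleness of $L$ from Definition-Proposition \ref{defipropSatakecompactification} $(c)$, and your write-up simply makes explicit the codimension-$\geq 2$ identification of rational Picard groups and the formal positivity dichotomy on the ray $\Q \cdot L$ (with the harmless caveat that one should pass to a sufficiently divisible multiple of $-maL$ to guarantee a nonzero section). No gaps.
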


\begin{rem}
\label{remampledifficilepourA2}
	We did not mention the case of modular curves (also difficult, but treated by different methods): the point here is that the cases $g \geq 3$ are surprisingly much more uniform because then $\Pic(A_g(n)^S_\C) = \Pic (A_g(1)^S_\C)$. The reason is that some rigidity appears from $g \geq 3$ (essentially by the general arguments of \cite{Borel81}), whereas for $g=2$, the situation seems very complex already for the small levels (see for example $n=3$  in \cite{HoffmanWeintraub00}).
	
	This is why the ampleness (or bigness) is in general hard to figure out for given divisors of $A_2(n), n >1$. We consider specific divisors in the following (namely, divisors of zeroes of theta functions), whose ampleness will not be hard to prove.
\end{rem}

\begin{proof}[Proof of Proposition \ref{proprationalPicardSiegel}]
\hspace*{\fill}

$(a)$ This is true for the $A_g(n)^S_\C$ by \cite{ArtalBartolo14} as they only have finite quotient singularities,  (this result actually seems to have been generally assumed a long time ago). Now, as $\partial A_g(n)^S_\C$ is of codimension at least 2, the two varieties $A_g(n)^S_\C$ and $A_g(n)_\C$ have the same Weil and Cartier divisors, hence the same rational Picard groups.

$(b)$ This is a consequence of general results of \cite{Borel81} further refined in \cite{Weissauer92} (it can even be generalised to every $g \geq 3$). 

$(c)$ This comes from the computations of section III.9 of \cite{Mumford83} (for another  compactification, called toroidal), from which we extract the result for $A_2(1)_\C$ by a classical restriction theorem (\cite{Hartshorne}, Proposition II.6.5) because the boundary for this compactification is irreducible of codimension 1. The result for $A_2(1)^S_\C$ is then the same because the boundary is of codimension 2. 
\end{proof}

\subsection{Theta divisors on abelian varieties and moduli spaces}
\label{subsecthetadivabvar}
We will now define the useful notions for our integral points problem.

\begin{defi}[Theta divisor on an abelian variety]
\hspace*{\fill}
\label{defithetadivisorabvar}

Let $k$ be an algebraically closed field and $A$ an abelian variety over $k$.

Let $L$ be an ample symmetric line bundle on $A$ inducing a principal polarisation $\lambda$ on $A$. A \textit{theta function associated to} $(A,L)$ is a nonzero global section $\vartheta_{A,L}$ of $L$. The \textit{theta divisor associated to }$(A,L)$, denoted by $\Theta_{A,L}$, is the divisor of zeroes of $\vartheta_{A,L}$, well-defined and independent of our choice because $\dim H^0(A,L)=\deg(\lambda)^2 = 1$.
\end{defi}

The theta divisor is in fact determined by the polarisation $\lambda$ itself, up to a finite ambiguity we make clear below.

\begin{prop}
\hspace*{\fill}
\label{propambiguitedivthetaAL}

Let $k$ be an algebraically closed field and $A$ an abelian variety over $k$.

Two ample symmetric line bundles $L$ and $L'$ on $A$ inducing a principal polarisation induce the same one if and only if $L' \cong T_x^* L$ for some $x \in A|2]$, and then 
\[
	\Theta_{A,L'} = \Theta_{A,L} + x.
\]

\end{prop}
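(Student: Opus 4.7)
The plan is to reduce everything to the standard description of $\Pic(A)$ via the exact sequence $0 \to \Pic^0(A) \to \Pic(A) \xrightarrow{\lambda_\bullet} \Hom(A,\widehat{A})$, where $\lambda_L(x) = T_x^*L \otimes L^{-1}$, together with the fact that on $\Pic^0(A)$ pullback by $[-1]$ is inversion.

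First I would prove the implication $\lambda_L = \lambda_{L'} \Rightarrow L' \cong T_x^*L$ for some $x \in A[2]$. By definition of $\lambda_L$, the equality $\lambda_L = \lambda_{L'}$ is equivalent to $T_y^*(L \otimes (L')^{-1}) \cong L \otimes (L')^{-1}$ for every $y \in A(k)$, i.e. $M := L \otimes (L')^{-1} \in \Pic^0(A)$. Since both $L$ and $L'$ are symmetric, $[-1]^*M \cong M$; but for $M \in \Pic^0(A)$ we also have $[-1]^*M \cong M^{-1}$, hence $M^{\otimes 2} \cong \Ocal_A$, so $M \in \widehat{A}[2]$. Because $\lambda_L$ is a principal polarisation, it restricts to an isomorphism $A[2] \xrightarrow{\sim} \widehat{A}[2]$, so there is a unique $x \in A[2]$ with $M = \lambda_L(-x) = T_{-x}^*L \otimes L^{-1}$. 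Rearranging gives $L' = L \otimes M^{-1} \cong T_x^*L$ (using $-x = x$ in $A[2]$).

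Next I would check the converse: if $L' \cong T_x^*L$ with $x \in A[2]$, then $L'$ is symmetric (since $[-1]^*T_x^*L \cong T_{-x}^*[-1]^*L \cong T_x^*L$ as $x=-x$) and $\lambda_{L'} = \lambda_L$. The latter follows from the theorem of the square, which gives $T_x^*L \otimes L^{-1} = \lambda_L(x)$, so $L' \otimes L^{-1} \in \Pic^0(A)$, and the map $\lambda_\bullet$ depends only on the class modulo $\Pic^0(A)$.

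Finally, for the statement on theta divisors: if $\vartheta_{A,L}$ is a nonzero global section of $L$, then $T_x^*\vartheta_{A,L}$ is a nonzero global section of $T_x^*L \cong L'$, and its divisor of zeroes is $T_{-x}^{-1}(\Theta_{A,L}) = \Theta_{A,L} - x$. Since $x \in A[2]$ we have $-x = x$, so $\Theta_{A,L'} = \Theta_{A,L} + x$, as claimed. There is no real obstacle here; the only point requiring attention is making sure the symmetry condition on $L'$ is used to land inside $A[2]$ rather than in the larger group of $x$ such that $T_x^*L \cong L \otimes (\text{something in } \Pic^0)$.
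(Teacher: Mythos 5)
Your proof is correct and follows essentially the same route as the paper: both reduce to the homomorphism $L \mapsto \lambda_L$ with kernel $\Pic^0(A)$, use the theorem of the square for the converse, and use symmetry of $L$ and $L'$ together with the fact that $\lambda_L$ is an isomorphism to force $x \in A[2]$. Your variant of the 2-torsion step (detecting that $M = L \otimes (L')^{-1}$ is 2-torsion in $\widehat{A}$ via $[-1]^*M \cong M^{-1}$, rather than the paper's $T_{-x}^*L \cong T_x^*L$) is an equivalent rearrangement of the same argument.
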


\begin{proof}
	For any line bundle $L$ on $A$, let us define 
	\[
	\fonction{\lambda_L}{A}{\widehat{A} = \Pic^0(A)}{x}{T_x^* L \otimes L^{-1}}.
	\]
	This is a group morphism and the application $L \mapsto \lambda_L$ is additive from $\Pic(A)$ to $\Hom(A,\widehat{A})$, with kernel $\Pic^0(A)$ (\cite{MumfordAbVar}, Chapter II, Corollary 4 and what follows, along with section II.8). Moreover, when $L$ is ample, the morphism $\lambda_L$ is the polarisation associated to $L$, in particular surjective. Now, for every $x \in A(k)$, if $L' \cong T_x^* L$, then $L' \otimes L^{-1}$ belongs to $\Pic^0(A)$, therefore $\lambda_{L'} = \lambda_{L}$. Conversely, if $\lambda_{L'} = \lambda_L$, one has $L' \otimes L^{-1} \in \Pic^0(A)$, hence if $L$ is ample, by surjectivity, one has $x \in A(k)$ such that $L'  \cong T_x^* L$. Finally, if $L$ and $L'$ are symmetric, having $[-1]^* L \cong L$ and $[-1]^* L' \cong L'$, we obtain $T_{-x}^* L \cong T_x^* L$ but as $\lambda_L$ is an isomorphism, this implies $[2] \cdot x = 0$, hence $x \in A[2]$.
	
	Therefore, for $\vartheta_{A,L}$ a nonzero section of $L$, $T_x^* \vartheta_{A,L}$ can be identified to a nonzero section of $L$, hence
	\[
	\Theta_{A,L'} = \Theta_{A,L} - x = \Theta_{A,L} + x.
	\]
\end{proof}

When $\carac(k) \neq 2$, adding to a principally polarised abelian variety $(A,\lambda)$ of dimension $g$ the datum $\alpha_2$ of a symplectic structure of level 2, we can determine an unique ample symmetric line bundle $L$ with the following process called \textit{Igusa correspondence}, devised in \cite{Igusa67bis}. To any ample symmetric Weil divisor $D$ defining a principal polarisation, one can associate bijectively a quadratic form $q_D$ from $A[2]$ to $\{ \pm 1 \}$ called \textit{even}, which means that the sum of its values on $A[2]$ is $2^{g}$ (\cite{Igusa67bis}, Theorem 2 and the previous arguments). On another side, the datum $\alpha_2$ also determines an even quadratic form $q_{\alpha_2}$, by associating to a $x \in A[2]$ with coordinates $(a,b) \in (\Z/2\Z)^{2g}$ in the basis $\alpha_2$ of $A[2]$ the value
\begin{equation}
\label{eqcorrespondanceIgusa}
q_{\alpha_2}(x) = (-1)^{a {}^t b}.
\end{equation}
We now only have to choose the unique ample symmetric divisor $D$ such that $q_D = q_{\alpha_2}$ and the line bundle $L$ associated to $D$.

By construction of this correspondence (\cite{Igusa67bis}, p. 823), a point $x \in A[2]$ of coordinates $(a,b) \in (\Z/2\Z)^{2g}$ in $\alpha_2$ automatically belongs to $\Theta_{A,L}$ (with $L$ associated to $(A,\lambda,\alpha_2)$) if $a{}^t b= 1 \mod 2$. A point of $A[2]$ with coordinates $(a,b)$ such that $a {}^t b = 0 \mod 2$ can also belong to $\Theta_{A,L}$ but with even multiplicity.

This allows us to get rid of the ambiguity of choice of an ample symmetric $L$ in the following, as soon as we have a symplectic level 2 structure (or finer) (
this result is a reformulation of Theorem 2 of \cite{Igusa67bis}).

\begin{defiprop}[Theta divisor canonically associated to a symplectic even level structure]
\label{defipropthetadiviseurcanonique}
\hspace*{\fill}

Let $n \geq 2$ even and $k$ algebraically closed such that $\carac(k)$ does not divide $n$.

For $(A,\lambda,\alpha_n)$ a principally polarised abelian variety of dimension $g$ with symplectic structure of level $n$ (Definition \ref{deficomplexabvar}), there is up to isomorphism an unique ample symmetric line bundle $L$ inducing $\lambda$ and associated by Igusa correspondence to the symplectic basis of $A[2]$ induced by $\alpha_n$. The \textit{theta divisor associated to} $(A,\lambda,\alpha_n)$, denoted by $\Theta_{A,\lambda,\alpha_n}$, is then the theta divisor associated to $(A,L)$, .

\end{defiprop}

The Runge-type theorem we give in section \ref{sectionapplicationsSiegel} (Theorem \ref{thmtubularRungegeneral}) focuses on principally polarised abelian surfaces $(A,\lambda)$ on a number field $K$ whose theta divisor does not contain any $n$-torsion point of $A$ (except 2-torsion points, as we will see it is automatic). This will imply (Proposition \ref{propnombrepointsdivthetajacobienne}) that $A$ is not a product of elliptic curves, but this is not a sufficient condition, as pointed out for example in \cite{BoxallGrant}.

We will once again start with the complex case to figure out how such a condition can be formulated on the moduli spaces, using complex theta functions (\cite{MumfordTata}, Chapter II).

\begin{defiprop}[Complex theta functions]
\hspace*{\fill}
\label{defipropcomplexthetafunctions}

Let $g \geq 1$.

The holomorphic function $\Theta$ on $\C^g \times \Hcal_g$ is defined by the series (convergent on any compact subset)
\begin{equation}
\label{eqdefserietheta}
\Theta(z,\tau) = \sum_{n \in \Z^g} e^{ i \pi n \tau {}^t n + 2 i \pi n {}^t z}.
\end{equation}

For any $a,b \in \R^g$, we also define the holomorphic function $\Theta_{a,b}$ by 
\begin{equation}
\label{eqdefseriethetaab}
\Theta_{a,b}(z,\tau) = \sum_{n \in \Z^g} e^{ i \pi (n+a) \tau {}^t (n+a) + 2 i \pi (n+a) {}^t (z+b)}.
\end{equation}

For a fixed $\tau \in \Hcal_g$, one defines $\Theta_\tau : z \mapsto \Theta(z,\tau)$ and similarly for $\Theta_{a,b,\tau}$. These functions have the following properties.

$(a)$ For every $a,b \in \Z^g$, 
\begin{equation}
\label{eqthetaabenfonctiontheta}
	\Theta_{a,b,\tau} (z) = e^{i \pi a \tau {}^t a + 2 i \pi a {}^t (z+b)} \Theta_\tau(z + a \tau + b).
\end{equation}

$(b)$ For every $p,q \in \Z^g$, 
\begin{equation}
\label{eqfoncthetaptranslation}
\Theta_{a,b,\tau}(z+p\tau + q) = e^{- i \pi p \tau {}^t p - 2 i \pi p{}^t z + 2 i \pi (a{}^t q - b {}^t p) } \Theta_{a,b,\tau} (z).
\end{equation}

$(c)$ Let us denote by $\vartheta$ and $\vartheta_{a,b}$ the \textit{normalised theta-constants},  which are the holomorphic functions on $\Hcal_g$ defined by 
\begin{equation}
\label{eqdefthetaconstantes}
\vartheta(\tau) : = \Theta(0,\tau) \quad {\textrm{and}} \quad \vartheta_{a,b} (\tau) := e^{ - i \pi a {}^t b} \Theta_{a,b} (0,\tau).
\end{equation}

These theta functions satisfy the following modularity property : with the notations of Definition \ref{deficomplexabvar},
\begin{equation}
\label{eqmodularitethetaconstantes}
\forall \gamma \in \Gamma_g(2), \quad \vartheta_{a,b} (\gamma \cdot \tau) = \zeta_8(\gamma) e^{ i \pi (a,b)^t V_\gamma} \sqrt{j_\gamma(\tau)} \vartheta_{(a,b)\gamma} (\tau),
\end{equation}
where $\zeta_8(\gamma)$ (a $8$-th root of unity) and $V_\gamma \in \Z^g$ only depend on $\gamma$ and the determination of the square root of $j_\gamma(\tau)$.

In particular, for every even $n \geq 2$, if $(na,nb) \in \Z^{2g}$, the function $\vartheta_{a,b}^{8n}$ is a Siegel modular form of degree $g$, level $n$ and weight $4n$, which only depends on $(a,b) \! \mod \Z^{2g}$.

\end{defiprop}

\begin{proof}
	The convergence of these series as well as their functional equations  \eqref{eqthetaabenfonctiontheta} and \eqref{eqfoncthetaptranslation} are classical and can be found in section II.1 of \cite{MumfordTata}.

	The modularity property \eqref{eqmodularitethetaconstantes} (also classical) is a particular case of the computations of section II.5 of \cite{MumfordTata} (we do not need here the general formula for $\gamma \in \Sp_{2g} (\Z)$).
	
	Finally, by natural computations of the series defining $\Theta_{a,b}$, one readily obtains that 
	\[
		\vartheta_{a+p,b+q} = e^{2 i \pi (a{}^t q - b{}^t p)} \vartheta_{a,b}. 
	\]
Therefore, if $(na,nb) \in \Z^{2g}$, the function $\vartheta_{a,b}^n$ only depends on $(a,b) \! \mod \Z^{2g}$. Now, putting the modularity formula \eqref{eqmodularitethetaconstantes} to the power $8n$, one eliminates the eight root of unity and if $\gamma \in \Gamma_g(n)$, one has $(a,b) \gamma = (a,b) \mod \Z^g$ hence $\vartheta_{a,b}^{8n}$ is a Siegel modular form of weight $4n$ for $\Gamma_g(n)$.	
\end{proof}

There is of course an explicit link between the theta functions and the notion of theta divisor, which we explain now with the notations of Definition \ref{deficomplexabvar}.

\begin{prop}[Theta divisor and theta functions]
\hspace*{\fill}
\label{propliendivthetafonctiontheta}

Let $\tau \in \Hcal_g$. 

The line bundle $L_\tau$ is ample and symmetric on $A_\tau$, and defines a principal polarisation on $A_\tau$. It is also the line bundle canonically associated to the 2-structure $\alpha_{\tau,2}$ and its polarisation by Igusa correspondence (Definition-Proposition \ref{defipropthetadiviseurcanonique}).

Furthermore, the global sections of $L_\tau$ canonically identify to the multiples of $\Theta_{\tau}$, hence the theta divisor associated to $(A_\tau, \lambda_\tau,\alpha_{\tau,2})$ is exactly the divisor of zeroes of $\Theta_{\tau}$ modulo $\Lambda_\tau$.

Thus, for every $a,b \in \R^g$, the projection of $\pi_\tau(a \tau + b)$ belongs to $\Theta_{A_\tau, \lambda_\tau, \alpha_{\tau,2}}$ if and only if $\vartheta_{a,b} (\tau)=0$.
	
\end{prop}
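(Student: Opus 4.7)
The plan is to treat the assertions of the proposition in an order that exploits the dimension count $\dim H^0(A_\tau,L_\tau)=1$: first I would establish the intrinsic properties of $L_\tau$ and identify $H^0(A_\tau,L_\tau)$ with $\C \cdot \Theta_\tau$, from which the last formula of the proposition follows by a direct manipulation of \eqref{eqthetaabenfonctiontheta}; the identification of $L_\tau$ with the canonical Igusa line bundle attached to $\alpha_{\tau,2}$ would then come last, as the main technical point.

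First, the cocycle in \eqref{eqdeffibresurAtau} presents $L_\tau$ as the Appell--Humbert line bundle attached to the Hermitian form of matrix $\Im(\tau)^{-1}$ with the trivial semicharacter on $\Lambda_\tau$. Since $\Im(\tau)>0$ this form is positive definite, so $L_\tau$ is ample; its imaginary part restricted to $\Lambda_\tau$ has Pfaffian $1$, so Riemann--Roch gives $\dim H^0(A_\tau,L_\tau)=1$ and $\lambda_{L_\tau}$ is a principal polarisation (necessarily $\lambda_\tau$ by construction). A global section of $L_\tau$ is a holomorphic $s:\C^g \to \C$ satisfying
\[
s(z+p\tau+q) = e^{-i\pi p\tau {}^t p - 2i\pi p {}^t z}\, s(z),
\]
which is exactly the transformation rule of $\Theta_\tau$ read off \eqref{eqfoncthetaptranslation} at $a=b=0$. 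Hence $H^0(A_\tau,L_\tau) = \C \cdot \Theta_\tau$ and the theta divisor of $(A_\tau,L_\tau)$ is the zero locus of $\Theta_\tau$ modulo $\Lambda_\tau$; the invariance of \eqref{eqdefserietheta} under $n \mapsto -n$ gives both $\Theta_\tau(-z)=\Theta_\tau(z)$ and the symmetry of $L_\tau$.

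Next I would extend \eqref{eqthetaabenfonctiontheta} to arbitrary $a,b \in \R^g$ by a straightforward manipulation of the series \eqref{eqdefseriethetaab} (using that $\tau$ is symmetric), evaluate at $z=0$, and combine with the definition \eqref{eqdefthetaconstantes} to obtain
\[
\vartheta_{a,b}(\tau) = e^{i\pi a\tau {}^t a + i\pi a {}^t b}\, \Theta_\tau(a\tau+b).
\]
The exponential prefactor being nowhere zero, this immediately yields $\vartheta_{a,b}(\tau)=0 \Longleftrightarrow \Theta_\tau(a\tau+b)=0$, i.e.\ the last claim of the proposition modulo the Igusa identification.

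The main obstacle is this Igusa identification. By Proposition \ref{propambiguitedivthetaAL} and the uniqueness in the correspondence recalled before Definition--Proposition \ref{defipropthetadiviseurcanonique}, it suffices to check that the quadratic form $q_{L_\tau}$ on $A_\tau[2]$ attached to $L_\tau$ equals $q_{\alpha_{\tau,2}}$ given by \eqref{eqcorrespondanceIgusa}. In the coordinates \eqref{eqdefalphataun}, a $2$-torsion point $x$ with coordinates $(a,b) \in (\Z/2\Z)^{2g}$ is $\pi_\tau((a+b\tau)/2)$, and the formula above applied with $(b/2, a/2)$ in place of $(a,b)$ identifies $\Theta_\tau((a+b\tau)/2)$ with a nonzero multiple of the theta constant $\vartheta_{b/2,a/2}(\tau)$. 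The classical parity theorem (\cite{MumfordTata}, Chapter~II) states that $\Theta_{b/2,a/2,\tau}$ is an odd function of $z$ precisely when $a\,{}^tb$ is odd, so in that case $\vartheta_{b/2,a/2}$ vanishes identically on $\Hcal_g$ and $\mult_x \Theta_\tau$ is odd; otherwise, for generic $\tau$, this multiplicity is zero. Via the standard formula $q_L(x) = (-1)^{\mult_x \Theta - \mult_0 \Theta}$ for the Igusa quadratic form of an even symmetric theta divisor, this yields $q_{L_\tau}(x) = (-1)^{a\,{}^tb} = q_{\alpha_{\tau,2}}(x)$, completing the identification. The care required lies in the normalisation of $q_L$ via multiplicities and in the $\tau$-invariance of $q_{L_\tau}$, both of which are standard but need a clean reference.
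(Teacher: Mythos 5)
Your proof is correct, and the first two thirds (ampleness, $h^0=1$, identification of sections with multiples of $\Theta_\tau$ via the cocycle, and the reduction of the last claim to the non-vanishing of the exponential prefactor in $\vartheta_{a,b}(\tau)=e^{i\pi a\tau{}^ta+i\pi a{}^tb}\Theta_\tau(a\tau+b)$) coincide with the paper's argument, which simply cites \cite{Debarre99} for the line-bundle facts rather than redoing Appell--Humbert and Riemann--Roch. Where you genuinely diverge is the Igusa identification. The paper stays inside Igusa's own definition: it writes down his auxiliary function $f_x(z)=\Theta_{a,b,\tau}(2z)/\Theta_\tau(2z)$ for $x=a\tau+b\in A_\tau[2]$, whose divisor is $[2]^*T_x^*\Theta_{A_\tau,L_\tau}-[2]^*\Theta_{A_\tau,L_\tau}$, and reads off $q_{L_\tau}(x)$ directly from the functional equation $f_x(-z)=e^{4i\pi a{}^tb}f_x(z)$, which follows from the evenness of $\Theta_\tau$ and \eqref{eqthetaabenfonctiontheta}; no multiplicities are needed. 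You instead route through the parity theorem for theta characteristics together with the formula $q_L(x)=(-1)^{\mult_x\Theta-\mult_0\Theta}$. This works, but it imports the (standard, yet nontrivial) equivalence between Igusa's functional-equation definition of $q_L$ and the multiplicity formula, which you rightly flag as needing a reference -- the paper's computation avoids that dependency entirely. One small imprecision on your side: for an even characteristic you say the multiplicity is zero ``for generic $\tau$'', but what the formula actually requires, and what the parity theorem actually gives for \emph{every} $\tau$, is that $\mult_x\Theta_\tau$ is even (the translated theta function is an even function of $z$, so its vanishing order at $0$ is even); genericity is neither needed nor, as stated, sufficient. With that phrasing corrected, both routes deliver $q_{L_\tau}=q_{\alpha_{\tau,2}}$.
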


\begin{rem}
	The proof below that the $L_\tau$ is the line bundle associated to $(A_\tau, \lambda_\tau, \alpha_{\tau,2})$ is a bit technical, but one has to suspect that Igusa normalised its correspondence by \eqref{eqcorrespondanceIgusa} exactly to make it work.
\end{rem}

\begin{proof}

One can easily see that $L_\tau$ is symmetric by writing $[-1]^* L_\tau$ as a quotient of $\C^g \times \C$ by an action of $\Lambda_\tau$, then figuring out it is the same as \eqref{eqdeffibresurAtau}. Then, by simple connexity, the global sections of $L_\tau$ lift by the quotient morphism $\C^g \times \C \rightarrow L_\tau$ into functions $z \mapsto (z,f(z))$, and the holomorphic functions $f$ thus obtained are exactly the functions satisfying functional equation \eqref{eqfoncthetaptranslation} for $a=b=0$ because of \eqref{eqdeffibresurAtau}, hence the same functional equation as $\Theta_{\tau}$. This identification is also compatible with the associated divisors, hence $\Theta_{A_\tau,L_\tau}$ is the divisor of zeroes of $\Theta_{\tau}$ modulo $\Lambda_\tau$. For more details on the theta functions and line bundles, see (\cite{Debarre99}, Chapters IV,V and section VI.2).

We now have to check that Igusa correspondence indeed associates $L_\tau$ to $(A_\tau,\lambda_\tau,\alpha_{\tau,2})$. With the notations of the construction of this correspondence (\cite{Igusa67bis}, pp.822, 823 and 833), one sees that the meromorphic function $\psi_x$ on $A_\tau$ (depending on $L_\tau$) associated to $x \in A_\tau[2]$ has divisor $[2]^* T_x^* \Theta_{A_\tau,L_\tau} - [2]^* \Theta_{A_\tau,L_\tau}$, hence it is (up to a constant) the meromorphic function induced on $A_\tau$ by 
\[
	f_x(z) = \frac{\Theta_{a,b,\tau} (2z)}{\Theta_\tau(2z)} \quad {\textrm{where}} \, \, x= a \tau + b \mod \Lambda_\tau.
\]
Now, the quadratic form $q$ associated to $L_\tau$ is defined by the identity 
\[
	f_x(-z) = q(x) f_x(z)
\]
for every $z \in \C^g$, but $\Theta_\tau$ is even hence 
\[
	f_x(-z) = e^{4 i \pi a^t b} f_x(z)
\]
by formula \eqref{eqthetaabenfonctiontheta}. Now, the coordinates of $x$ in $\alpha_{\tau,2}$ are exactly $(2b,2a) \mod \Z^{2g}$ by definition, hence $q=q_{\alpha_{\tau,2}}$.

Let us finally make the explicit link between zeroes of theta-constants and theta divisors : using the argument above, the divisor of zeroes of $\Theta_\tau$ modulo $\Lambda_\tau$ is exactly $\Theta_{A_\tau,L_\tau}$, hence $\Theta_{A_\tau,\lambda_\tau,\alpha_{\tau,2}}$ by what we just proved for the Igusa correspondence.  This implies that for every $z \in \C^g$, $\Theta_\tau(z)=0$ if and only if $\pi_\tau(z)$ belongs to $\Theta_{A_\tau,\lambda_\tau, \alpha_{\tau,2}}$, and as $\vartheta_{a,b} (\tau)$ is a nonzero multiple of $\Theta(a \tau + b,\tau)$, we finally have that $\vartheta_{a,b}(\tau)=0$ if and only if $\pi_\tau(a\tau+b)$ belongs to $\Theta_{A_\tau,\lambda_\tau, \alpha_{\tau,2}}$.

\end{proof}

\section{Applications of the main result on a family of Siegel modular varieties}
\label{sectionapplicationsSiegel}

We now have almost enough definitions to state the problem which we will consider for our Runge-type result (Theorem \ref{thmtubularRungegeneral}). We consider theta divisors on abelian surfaces, and their torsion points.

\subsection{The specific situation for theta divisors on abelian surfaces}
\label{subsecthetadivabsur}
As an introduction and a preliminary result, let us treat first the case of theta divisors on elliptic curves.

\begin{lem}[Theta divisor on an elliptic curve]
\label{lemdivthetaCE}
\hspace*{\fill}

Let $E$ be an elliptic curve on an algebraically closed field $k$ with $\carac(k) \neq 2$ and $L$ an ample symmetric line bundle defining the principal polarisation on $E$. 

The effective divisor $\Theta_{E,L}$ is a 2-torsion point of $E$ with multiplicity one. More precisely, if $(e_1,e_2)$ is the basis of $E[2]$ associated by Igusa correspondence to $L$ (Definition-Proposition \ref{defipropthetadiviseurcanonique}),
\begin{equation}
	\label{eqdivthetaCEexplicite}
	\Theta_{E,L} = [e_1 + e_2].
\end{equation}
\end{lem}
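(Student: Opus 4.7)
The plan is to proceed in two steps. First, I would establish that $\Theta_{E,L}$ is automatically a $2$-torsion point of multiplicity one, using only the ``principal polarisation'' hypothesis together with the symmetry of $L$. Then, I would identify which particular $2$-torsion point it is by unwinding Igusa correspondence.

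For the first step, since $L$ is ample with $\dim H^0(E,L)=1$, Riemann--Roch on the elliptic curve $E$ forces $\deg L = 1$. Hence $\Theta_{E,L}$ is an effective divisor of degree $1$, that is, a single closed point $P$ with multiplicity one. To see that $P$ is $2$-torsion, note that the symmetry $[-1]^* L \cong L$ allows one to identify $[-1]^* \vartheta_{E,L}$ with a nonzero section of $L$, hence (up to a scalar) with $\vartheta_{E,L}$ itself. The divisor $[-P] = [-1]^* [P]$ therefore equals $[P]$, so $P = -P$, i.e.\ $P \in E[2]$.

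For the second step, I would appeal to the defining property of Igusa correspondence recalled just below \eqref{eqcorrespondanceIgusa}. Writing the four points of $E[2]$ in coordinates $(a,b) \in (\Z/2\Z)^2$ with respect to $(e_1,e_2)$, the even quadratic form $q_{\alpha_2}$ takes the values $q_{\alpha_2}(0) = q_{\alpha_2}(e_1) = q_{\alpha_2}(e_2) = 1$ and $q_{\alpha_2}(e_1+e_2) = -1$, so the unique $2$-torsion point with $a \cdot b \equiv 1 \pmod 2$ is $e_1+e_2$. By the construction of Igusa correspondence, this point must belong to $\Theta_{E,L}$ (with odd multiplicity), and since $\Theta_{E,L}$ is already a single point of multiplicity one, we conclude that $\Theta_{E,L} = [e_1+e_2]$.

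There is no serious obstacle here: the only mildly delicate point is the argument that the unique zero of $\vartheta_{E,L}$ is $2$-torsion, which relies on the fact that the symmetry of $L$ translates into the evenness (up to scalar) of the section $\vartheta_{E,L}$. Once this is observed, Igusa correspondence pins down the exact $2$-torsion point, since $e_1+e_2$ is the only point of $E[2]$ on which the even quadratic form $q_{\alpha_2}$ takes the value $-1$.
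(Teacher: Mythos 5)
Your proof is correct and follows essentially the same route as the paper: Riemann--Roch gives $\deg \Theta_{E,L} = 1$, and the Igusa correspondence forces $e_1+e_2 \in \Theta_{E,L}$ because its coordinates satisfy $a\,{}^t b = 1 \bmod 2$. Your intermediate step showing that the unique zero is $2$-torsion via the symmetry of $L$ is valid but redundant, since once the degree is known to be one the Igusa argument alone already pins down the point.
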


\begin{rem}
	In the complex case, this can simply be obtained by proving that $\Theta_{1/2,1/2,\tau}$ is odd for every $\tau \in \Hcal_1$ hence cancels at 0, and has no other zeroes (by a residue theorem for example), then using Proposition \ref{propliendivthetafonctiontheta}.
\end{rem}

\begin{proof}
	By Riemann-Roch theorem on $E$, the divisor $\Theta_{E,L}$ is of degree 1 because $h^0(E,L)=1$ (and effective). Now, as explained before when discussing Igusa correspondence, for $a,b \in \Z$, $a e_1 + b e_2$ automatically belongs to $\Theta_{E,L}$ if $a b = 1 \mod 2 \Z$, hence $\Theta_{E,L} = [e_1+ e_2]$.
\end{proof}

This allows to use to describe the theta divisor of a product of two elliptic curves.
\begin{prop}[Theta divisor on a product of two elliptic curves]
\label{propdivthetaproduitCE}
\hspace*{\fill}

Let $k$ be an algebraically closed field with $\carac(k) \neq 2$.

Let $(A,L)$ with $A=E_1 \times E_2$ a product of elliptic curves on $k$ and $L$ an ample symmetric line bundle on $A$ inducing the product principal polarisation on $A$. 
The divisor $\Theta_{A,L}$ is then of the shape 
\begin{equation}
\label{eqdivthetaproduitCE}
\Theta_{A,L} = \{x_1\} \times E_2 + E_1 \times \{x_2\},
\end{equation}
with $x_i \in E_i[2]$ for $i=1,2$. In particular, this divisor has a (unique) singular point of multiplicity two at $(x_1,x_2)$, and :

$(a)$ There are exactly seven 2-torsion points of $A$ belonging to $\Theta_{A,L}$: the six points given by the coordinates $(a,b) \in (\Z/2\Z)^4$ such that $a{}^t b= 1$ in a basis giving $\Theta_{A,L}$ by Igusa correspondence, and the seventh point $(x_1,x_2)$.

$(b)$ For every even $n \geq 2$ which is nonzero in $k$, the number of $n$-torsion (but not $2$-torsion) points of $A$ belonging to $\Theta_{A,L}$ is exactly $2(n^2-4)$.
\end{prop}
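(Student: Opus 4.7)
The plan is to exploit the product structure on $A$ to reduce to the one-dimensional case of Lemma~\ref{lemdivthetaCE}, then deduce the torsion counts by inclusion--exclusion using $A[n] = E_1[n] \times E_2[n]$.

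\textbf{Step 1 (Split model for $L$).} For $i=1,2$, choose an ample symmetric line bundle $L_i$ on $E_i$ inducing its principal polarisation, and set $L_0 := p_1^* L_1 \otimes p_2^* L_2$, where $p_i : A \to E_i$ is the projection. A direct computation shows $L_0$ is ample and symmetric, and that $\lambda_{L_0} = \lambda_{L_1} \times \lambda_{L_2}$ coincides with the product principal polarisation. Hence, by Proposition~\ref{propambiguitedivthetaAL} applied to $A$, the given $L$ satisfies $L \cong T_t^* L_0$ for some $t = (t_1,t_2) \in A[2] = E_1[2] \times E_2[2]$.

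\textbf{Step 2 (Shape of $\Theta_{A,L}$).} Since $k$ is a field, the Künneth formula gives
\[
H^0(A, L_0) \;\cong\; H^0(E_1, L_1) \otimes H^0(E_2, L_2),
\]
with both factors one-dimensional by principality of the polarisations. A generator is therefore of the form $\vartheta_1 \boxtimes \vartheta_2$, whose divisor equals $\Theta_{E_1,L_1} \times E_2 + E_1 \times \Theta_{E_2,L_2}$. Lemma~\ref{lemdivthetaCE} identifies $\Theta_{E_i,L_i}$ with a single $2$-torsion point $\{y_i\}$. Translating by $t$ using Proposition~\ref{propambiguitedivthetaAL} yields
\[
\Theta_{A,L} \;=\; \Theta_{A,L_0} + t \;=\; \{y_1 + t_1\} \times E_2 \,+\, E_1 \times \{y_2 + t_2\},
\]
which is \eqref{eqdivthetaproduitCE} with $x_i := y_i + t_i \in E_i[2]$.

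\textbf{Step 3 (Singular locus).} The two components $\{x_1\} \times E_2$ and $E_1 \times \{x_2\}$ are smooth curves meeting transversally at the unique point $(x_1,x_2)$. Outside this point $\Theta_{A,L}$ is smooth; at $(x_1,x_2)$, local coordinates $(u,v)$ pulled back from $E_1$ and $E_2$ give local equation $uv = 0$, so $(x_1,x_2)$ is a node and the multiplicity of $\Theta_{A,L}$ at that point is $2$.

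\textbf{Step 4 (Torsion counts).} Using $A[n] = E_1[n] \times E_2[n]$ with each factor of cardinality $n^2$ (as $n$ is invertible in $k$), a point $(a,b) \in A[n]$ lies on $\Theta_{A,L}$ iff $a = x_1$ or $b = x_2$; inclusion--exclusion gives $\#(\Theta_{A,L} \cap A[n]) = 2n^2 - 1$. For $n=2$ this is $7$. Now there are exactly $6$ pairs $(a,b) \in (\Z/2\Z)^4$ with $a{}^t b = 1$ (a short case check over $a \in (\Z/2\Z)^2$), and by the Igusa correspondence these $6$ points all lie on $\Theta_{A,L}$ (in a symplectic basis compatible with $L$); having multiplicity $1$, they must be the six smooth $2$-torsion points of $\Theta_{A,L}$. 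The remaining (seventh) point on $A[2] \cap \Theta_{A,L}$ is then forced to be the singular point $(x_1,x_2)$ of even characteristic and multiplicity $2$, proving (a). For (b), since $n$ is even we have $A[2] \subset A[n]$, so subtracting the seven $2$-torsion points yields $(2n^2 - 1) - 7 = 2(n^2 - 4)$.

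\textbf{Main obstacle.} The only delicate point is the matching in part (a) between the six smooth $2$-torsion points on $\Theta_{A,L}$ and the six odd characteristic classes; this is handled cleanly by combining the Igusa automatic-inclusion criterion with the cardinality count $7 = 6 + 1$, the ``$+1$'' being forced to be the singular point because it is the only $2$-torsion point where the divisor has even multiplicity.
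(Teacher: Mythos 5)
Your proof is correct and follows essentially the same route as the paper: the Künneth decomposition of $H^0(A,L)$ together with Lemma \ref{lemdivthetaCE} gives the shape of $\Theta_{A,L}$, and parts (a) and (b) follow from inclusion--exclusion on $A[n]=E_1[n]\times E_2[n]$ plus the observation that the node $(x_1,x_2)$ is the unique $2$-torsion point of even multiplicity, hence of even characteristic. Your Step 1 (reducing to the split bundle $p_1^*L_1\otimes p_2^*L_2$ via Proposition \ref{propambiguitedivthetaAL} and then translating by a $2$-torsion point) spells out a detail the paper leaves implicit in the phrase ``by construction of $(A,L)$'', which is a welcome precision but not a different argument.
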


\begin{proof}
	By construction of $(A,L)$, a global section of $(A,L)$ corresponds to a tensor product of global sections of $E_1$ and $E_2$ (with their principal polarisations), hence the shape of $\Theta_{A,L}$ is a consequence of Lemma \ref{lemdivthetaCE}.
	
	We readily deduce $(a)$ and $(b)$ from this shape, using that the intersection of the two components of $\Theta_{A,L}$ is a 2-torsion point of even multiplicity for the quadratic form hence different from the six other ones.
\end{proof}

To explain the result for abelian surfaces which are not products of elliptic curves, we recall below a fundamental result.

\begin{prop}[Shapes of principally polarised abelian surfaces]
\label{propsurfabnonproduitCEetdivtheta}
\hspace*{\fill}

	Let $k$ be any field.
	
	A principally polarised abelian surface $(A,\lambda)$ on $k$ is, after a finite extension of scalars, either the product of two elliptic curves (with its natural product polarisation), or the jacobian $J$ of an hyperelliptic curve $C$ of genus 2 (with its canonical principal polarisation). In the second case, for the Albanese embedding $\phi_x : C \rightarrow J$ with base-point $x$ and an ample symmetric line bundle $L$ on $K$ inducing $\lambda$, the divisor $\Theta_{J,L}$ is irreducible, and it is actually a translation of $\phi_x(C)$ by some point of $J(\overline{k})$.
	
\end{prop}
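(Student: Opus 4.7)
After base change to $\overline{k}$, fix an ample symmetric line bundle $L$ inducing $\lambda$ and set $\Theta = \Theta_{A,L}$. Since $\lambda$ is principal we have $(\Theta^2)=2$, and because $K_A=0$ on any abelian surface the adjunction formula gives the arithmetic genus $p_a(\Theta)=1+\tfrac{1}{2}(\Theta^2+\Theta\cdot K_A)=2$. The argument then splits into the two cases according to whether $\Theta$ is irreducible, and I would verify in each case that the corresponding geometric description holds.

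Assume first that $\Theta=D_1+D_2$ with $D_1,D_2$ effective and nonzero. On an abelian surface every effective divisor is nef, so $D_i^2\geq 0$, and since both $D_i$ move in their algebraic equivalence classes one has $(D_1\cdot D_2)\geq 1$. The identity $2=(\Theta^2)=D_1^2+D_2^2+2(D_1\cdot D_2)$ then forces $D_1^2=D_2^2=0$ and $(D_1\cdot D_2)=1$. An effective divisor $D$ with $D^2=0$ on an abelian surface is a translate of an elliptic subcurve (the fibers of $A\to \Pic^0(D)^{\vee}$ are parallel copies of it), so each $D_i$ is a translate of an elliptic curve $E_i\subset A$. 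After translating inside $\Theta$ (which changes $L$ only by a $2$-torsion shift, harmless by Proposition \ref{propambiguitedivthetaAL}) we may assume $E_1\cap E_2=\{0\}$; the addition morphism $E_1\times E_2\to A$ is then an isogeny of degree $(D_1\cdot D_2)=1$, hence an isomorphism, under which $\Theta$ becomes $E_1\times\{0\}+\{0\}\times E_2$, i.e.\ the theta divisor of the product principal polarization.

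Assume now that $\Theta$ is irreducible, and let $\nu:\widetilde{\Theta}\to\Theta$ be the normalization, a smooth projective curve of genus $g(\widetilde{\Theta})\leq p_a(\Theta)=2$. Fix $x\in\widetilde{\Theta}$ mapping to a point which, after a translation, we take to be the origin of $A$; let $\phi_x:\widetilde{\Theta}\to J:=\Jac(\widetilde{\Theta})$ be the Abel--Jacobi embedding. The composition $\widetilde{\Theta}\to\Theta\hookrightarrow A$ sends $x$ to $0$, so by the universal property of the Jacobian it factors uniquely through a morphism of abelian varieties $\psi:J\to A$. The image of $\psi$ is an abelian subvariety of $A$ containing the curve generating $A$, hence $\psi$ is surjective; this forces $g(\widetilde{\Theta})\geq\dim A=2$, so $g(\widetilde{\Theta})=2$ and $\psi$ is an isogeny between abelian surfaces. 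Pulling back $\Theta$ by $\psi$ yields an effective divisor representing a principal polarization on $J$, and by the Poincaré formula $\phi_x(\widetilde{\Theta})$ itself represents the canonical principal polarization of $J$. Comparing polarizations shows $\deg\psi=1$, so $\psi$ is an isomorphism of principally polarized abelian varieties identifying $\Theta$ with a translate of $\phi_x(\widetilde{\Theta})$; in particular $\Theta\cong\widetilde{\Theta}$ is smooth of genus $2$, and we can take $C=\widetilde{\Theta}$.

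\textbf{Main obstacle.} The delicate point is the irreducible case: one must simultaneously show that the normalization has maximal genus $2$, that the induced morphism $J\to A$ is an isomorphism, and that $\Theta$ is actually smooth. The key technical input is the identification of the class of $\phi_x(\widetilde{\Theta})$ with the canonical theta class on $J$ (Poincaré's formula for $g=2$, $d=1$), together with the fact that the pullback of a principal polarization by an isogeny has degree equal to the square of that isogeny; these two facts together force $\deg\psi=1$. Smoothness of $\Theta$ then follows because any singular point of $\Theta$ would force the finite map $\widetilde{\Theta}\to\Theta$ to contract or identify points, contradicting the fact that $\psi$ is an isomorphism and $\phi_x$ is a closed immersion for $g(\widetilde{\Theta})=2$.
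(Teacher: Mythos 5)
Your argument is essentially correct, but it takes a completely different route from the paper: the paper does not prove this proposition at all, it simply cites Oort--Ueno (whose proof goes through the complex-analytic case and then descends to arbitrary fields), whereas you give a self-contained intersection-theoretic argument over $\overline{k}$ valid in any characteristic. The skeleton is the standard one: $\Theta^2=2$, adjunction gives $p_a(\Theta)=2$, and the dichotomy reducible/irreducible yields respectively the product of elliptic curves and the Jacobian. What your approach buys is uniformity in the characteristic and explicitness; what it costs is that two steps are asserted rather than proved, and both deserve a line each.

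First, in the reducible case, the inequality $(D_1\cdot D_2)\geq 1$ does not follow merely from the $D_i$ moving in their algebraic equivalence classes (that only gives $\geq 0$). The clean justification is ampleness of $\Theta$: one has $\Theta\cdot D_i=D_i^2+D_1\cdot D_2>0$ for $i=1,2$, and since self-intersections on an abelian surface are even (Riemann--Roch: $D^2=2\chi(\Ocal(D))$), the only solution of $D_1^2+D_2^2+2D_1\cdot D_2=2$ with $D_i^2\geq 0$ even and both $\Theta\cdot D_i>0$ is $D_1^2=D_2^2=0$, $D_1\cdot D_2=1$. Also, the translation bringing the intersection point to the origin is by an arbitrary point, not a $2$-torsion one; this is harmless because it does not change the polarisation $\lambda$, which is all the statement requires.

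Second, in the irreducible case, the assertion that $\psi^*\Theta$ represents a principal polarisation on $J$ is exactly the crux and should not be taken for granted. It follows from: $\psi|_{W}:W:=\phi_x(\widetilde{\Theta})\to\Theta$ is birational (it is $\nu$ composed with the inverse of the embedding $\phi_x$), so $\psi_*W=\Theta$ as cycles and $\psi^*\Theta\cdot W=\Theta\cdot\psi_*W=\Theta^2=2=W^2$; writing $\psi^*\Theta=W+R$ with $R$ effective gives $R\cdot W=0$, hence $R=0$ by ampleness of $W$, hence $(\psi^*\Theta)^2=W^2=2$ while $(\psi^*\Theta)^2=\deg(\psi)\cdot\Theta^2$, forcing $\deg\psi=1$. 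With these two points filled in, your proof is complete and is arguably a more transparent argument than the reference the paper relies on.
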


\begin{proof}
	This proposition (together with the dimension 3 case, for the curious reader) is the main topic of \cite{OortUeno} (remarkably, its proof starts with the complex case and geometric arguments before using scheme and descent techniques to extend it to all fields).
\end{proof}

Let us now fix an algebraically closed field $k$ with $\carac(k) \neq 2$.

Let $C$ be an hyperelliptic curve of genus 2, and $\iota$ its hyperelliptic involution. This curve has exactly six Weierstrass points (the fixed points of $\iota$, by definition), and we fix one of them, denoted by $\infty$. For the Albanese morphism $\phi_\infty$, the divisor $\phi_\infty(C)$ is stable by $[-1]$ because the divisor $[x] + [\iota(x)] - 2 [\infty]$ is principal for every $x \in C$. As $\Theta_{J,L}$ is also symmetric and a translation of $\phi_\infty(C)$, we know that $\Theta_{J,L} = T_x^* (\phi_\infty(C))$ for some $x \in J[2]$.

This tells us that understanding the points of $\Theta_{J,L}$ amounts to understanding how the curve $C$ behaves when embedded in its jacobian (in particular, how its points add). 
It is a difficult problem to know which torsion points of $J$ belong to the theta divisor (see \cite{BoxallGrant} for example), but we will only need to bound their quantity here, with the following result.

\begin{prop}
\hspace*{\fill}
\label{propnombrepointsdivthetajacobienne}

Let $k$ an algebraically closed field with $\carac(k) \neq 2$.

Let $C$ be an hyperelliptic curve of genus 2 on $k$ with jacobian $J$, and $\infty$ a fixed Weierstrass point of $C$.
We denote by $\widetilde{C}$ the image of $C$ in $J$ by the associated embedding $\phi_\infty : x \mapsto \overline{[x] - [\infty]}$.

$(a)$ The set $\widetilde{C}$ is stable by $[-1]$, and the application 
\[
	\fonctionsansnom{\operatorname{Sym}^2(\widetilde{C})}{J}{\{P,Q\}}{P+Q}
\]
is injective outside the fiber above 0.

$(b)$ There are exactly six 2-torsion points of $J$ belonging to $\widetilde{C}$, and they are equivalently the images of the Weierstrass points and the points of coordinates $(a,b) \in ((\Z/2\Z)^2)^2$ such that $a{}^t b= 1$ in a basis giving $\widetilde{C}$ by Igusa correspondence.

$(c)$ For any even $n \geq 2$ which is nonzero in $k$, the number of $n$-torsion points of $J$ belonging to $\widetilde{C}$ is bounded by $\sqrt{2} n^2 + \frac{1}{2}$.
\end{prop}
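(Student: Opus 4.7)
For part~(a), I will first observe that since $\infty$ is a Weierstrass point and $\iota$ is the hyperelliptic involution of $C$, one has $[x] + [\iota(x)] \sim K_C \sim 2[\infty]$, so $\phi_\infty(\iota(x)) = -\phi_\infty(x)$, proving that $\widetilde{C}$ is stable under $[-1]$. Since $\phi_\infty : C \to \widetilde{C}$ is a closed immersion (Abel--Jacobi on a curve of positive genus), the summation map on $\operatorname{Sym}^2(\widetilde{C})$ identifies with the classical Abel--Jacobi map $\operatorname{Sym}^2(C) \to J$, $\{x,y\} \mapsto [x]+[y]-2[\infty]$. A Riemann--Roch computation shows that on a genus $2$ curve the only linear system of positive dimension consisting of effective divisors of degree $2$ is $|K_C|$, whose members $\{x, \iota(x)\}$ all sum to $K_C - 2[\infty] = 0$ in $J$; hence the map is injective outside the fiber over $0$.

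For part~(b), I plan to use $2[w] \sim K_C \sim 2[\infty]$ for each of the six Weierstrass points $w$ to exhibit $\phi_\infty(w) \in J[2] \cap \widetilde{C}$, giving six such points (distinct by injectivity of $\phi_\infty$). Conversely, $P = \phi_\infty(x) \in J[2] \cap \widetilde{C}$ forces $\phi_\infty(x) = -\phi_\infty(x) = \phi_\infty(\iota(x))$ by~(a), hence $x = \iota(x)$, i.e.\ $x$ is Weierstrass. For the Igusa description, I will use Proposition~\ref{propambiguitedivthetaAL} to write $\widetilde{C} = \Theta_{J,L}$ for some ample symmetric $L$ (using that $\widetilde{C}$ is symmetric by~(a) and defines the principal polarisation of $J$); Igusa correspondence then provides a symplectic basis $\alpha_2$ in which exactly the six points with coordinates $(a,b) \in ((\Z/2\Z)^2)^2$ satisfying $a{}^t b \equiv 1 \pmod 2$ are automatically on $\widetilde{C}$, and since $\widetilde{C}$ is a smooth divisor, no other $2$-torsion point (which would have to appear there with even multiplicity $\geq 2$) can lie on it.

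Part~(c) is the combinatorial heart of the proof. Let $T := J[n] \cap \widetilde{C}$, $N := |T|$, and $N_0 := |T \cap J[2]| = 6$ by~(b) together with $J[2] \subset J[n]$ (since $n$ is even). As $J[n]$ and $\widetilde{C}$ are both stable under $[-1]$, so is $T$. Consider the summation map $\sigma : \operatorname{Sym}^2(T) \to J[n]$, $\{P, Q\} \mapsto P + Q$; by~(a) it is injective away from its fiber over $0$. Counting the pairs in that fiber yields $N_0$ diagonal pairs $\{P, P\}$ with $P \in J[2] \cap T$ plus $(N-N_0)/2$ off-diagonal pairs $\{P, -P\}$ with $P \notin J[2]$, totalling $(N + N_0)/2$. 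The resulting inequality
\[
\binom{N+1}{2} - \frac{N + N_0}{2} \leq |J[n]| - 1 = n^4 - 1
\]
simplifies to $N^2 \leq 2n^4 - 2 + N_0 \leq 2n^4 + 4$, and an elementary check that $2n^4 + 4 \leq (\sqrt{2}\,n^2 + 1/2)^2$ for all integers $n \geq 2$ (reducing to $\sqrt{2}\,n^2 \geq 15/4$) yields the stated bound. I expect the main subtlety to be the injectivity statement of~(a), which relies on correctly identifying the unique $g^1_2$ via Riemann--Roch; everything else is a careful count.
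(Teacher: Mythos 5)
Your proof is correct and follows essentially the same route as the paper: the Riemann--Roch identification of the unique positive-dimensional degree-$2$ linear system with $|K_C|$ for (a), the Weierstrass-point characterisation for (b), and a count of fibers of the summation map for (c). The only (cosmetic) difference is that you count unordered pairs in $\operatorname{Sym}^2(T)$ with the exact size $(N+N_0)/2$ of the fiber over $0$, whereas the paper counts ordered pairs in $\widetilde{C}[n]^2$ with fiber of size $N$ over $0$ and at most $2$ elsewhere; both yield the stated bound $\sqrt{2}\,n^2+\tfrac12$.
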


\begin{rem}
	This proposition is not exactly a new result, and its principle can be found (with slightly different formulations) in Theorem 1.3 of \cite{BoxallGrant} or  in Lemma 5.1 of \cite{Pazuki12}. For the latter, it is presented as a consequence on Abel-Jacobi theorem on $\C$, and we will here give a more detailed proof, which is also readily valid on any field. The problem of counting (or bounding) torsion points on the theta divisor has interested many people, e.g. \cite{BoxallGrant} and very recently \cite{Torsionthetadivisors} in general dimension. Notice that the results above give the expected bound in the case $g=2$, but we do not know how much we can lower the bound $\sqrt{2} n^2$ in the case of jacobians.
\end{rem}

\begin{proof}
As $[\infty]$ is a Weierstrass point, the divisor $2 [\infty]$ is canonical. Conversely, if a degree two divisor $D$ satisfies $\ell(D) := \dim H^0 (C,\Ocal_C(D)) \geq 2$, then it is canonical. Indeed, by Riemann-Roch theorem, this implies that $\ell(2 [\infty] - D) \geq 1$ but this divisor is of degree 0, hence it is principal and $D$ is canonical. Now, let $x,y,z,t$ be four points of $C$ such that $\phi_\infty(x) + \phi_\infty(y) = \phi_\infty(z) + \phi_\infty(t) $ in $J$. This implies that $[x] + [y] - [z] - [t]$ is the divisor of some function $f$, and then either $f$ is constant (i.e. $\{x,y\} = \{z,t\}$), either $\ell([z] + [t]) \geq 2$ hence $[z] + [t]$ is canonical by the argument above, and in this case the points $P =\phi_\infty(z)$ and $Q=\phi_\infty(t)=0$ of $\widetilde{C}$ satisfy $P+Q=0$ in $J$, which proves $(a)$.

Now, for $n \geq 2$ even, let us denote $\widetilde{C} [n] := \widetilde{C} \cap J[n]$. The summing map from $\widetilde{C} [n]^2$ to $J[n]$ has a fiber of cardinal $|\widetilde{C}[n]|$ above 0 and at most 2 above any other point of $J[n]$ by $(a)$, hence the inequality of degree two 
\[
	|\widetilde{C}[n]|^2 \leq |\widetilde{C}[n]| + 2 (n^4 - 1),
\]
from which we directly obtain $(c)$. In the case $n=2$, it is enough to see that $ 2 \phi_\infty(x) = 0$ if and only if $2 [x]$ is canonical if and only if $x$ is a Weierstrass divisor, which gives $(b)$.
\end{proof}

We can now define the divisors we will consider for our Runge-type theorem, with the following notation.

\textbf{Convention}
\hspace*{\fill}

Until the end of this article, the expression ``a couple $(a,b) \in (\Z /n \Z)^4$ (resp. $\Z^4, \Q^4$ )'' is a shorthand to designate the row vector with four coefficients where $a \in (\Z/n\Z)^2$ (resp. $\Z^2$, $\Q^2$ )  make up the first two coefficients and $b$ the last two coefficients.

\begin{defiprop}[Theta divisors on $A_2(n)^S_\C$]
\hspace*{\fill}
\label{defipropdivthetaA2ncomplexes}

Let $n \in \N_{\geq 2}$ even.

$(a)$ A couple $(a,b) \in (\Z/n\Z)^4$ is called \textit{regular} if it is \textit{not} of the shape $((n/2)a',(n/2)b')$ with $(a',b') \in ((\Z/2\Z)^2)^2$ such that $a' {}^t b' = 1 \mod 2$. There are exactly 6 couples $(a,b)$ not satisfying this condition, which we call \textit{singular}.

$(b)$ If $(a,b) \in (\Z/n\Z)^4$ is regular, for every lift $(\widetilde{a}, \widetilde{b}) \in \Z^4$ of $(a,b)$, the function $\vartheta_{\widetilde{a}/n, \widetilde{b}/n}^{8n}$ is a \textit{nonzero} Siegel modular form of degree 2, weight $4n$ and level $n$, independent of the choice of lifts. The \textit{theta divisor associated to $(a,b)$}, denoted by $(D_{n,a,b})_\C$, is the  Weil divisor of zeroes of this Siegel modular form on $A_2(n)^S_\C$.

$(c)$ For $(a,b)$ and $(a',b')$ regular couples in $(\Z/n\Z)^4$, the Weil divisors $(D_{n,a,b})_\C$ and $(D_{n,a',b'})_\C$ are equal if and only if $(a,b) = \pm (a',b')$. Hence, the set of regular couples defines exactly $n^4/2 + 2$ pairwise distinct Weil divisors.

\end{defiprop}

\begin{rem}
	The singular couples correspond to what are called \textit{odd characteristics} by Igusa. The proof below uses Fourier expansions to figure out which theta functions are nontrivial or proportional, but we conjecture the stronger result that $(D_{n,a,b})_\C$ and $(D_{n,a',b'})_\C$ are set-theoretically distinct (i.e. even without counting the multiplicities) unless $(a,b) = \pm (a',b')$. Such a result seems natural as the image of a curve into its jacobian should generically not have any other symmetry than $[-1]$, but we could not obtain it by looking at the simpler case (in $A_2(n)^S_\C$) of the products of elliptic curves: if $(a,b)$ and $(a',b')$ are both multiples of a primitive vector $v \in (1/n)\Z^4$, it is tedious but straighforward to see that the theta constants $\vartheta_{a,b}$ and $\vartheta_{a',b'}$ vanish on the same products of elliptic curves. Hence, to prove that the reduced divisors of $(D_{n,a,b})_\C$ and $(D_{n,a',b'})_\C$ are distinct unless $(a,b) = \pm (a',b')$, one needs to exhibit a curve $C$ whose jacobian isomorphic to $A_\tau$ contains $\pi_\tau(a \tau + b)$ but not $\pi_\tau(a' \tau + b')$ in its theta divisor.
	
	Notice that this will not be a problem for us later because all our arguments for Runge are set-theoretic, and Proposition \ref{propdivthetaproduitCE} and \ref{propnombrepointsdivthetajacobienne} are not modified if some of the divisors taken into account are equal.
\end{rem}

\begin{proof}[Proof of Definition-Proposition \ref{defipropdivthetaA2ncomplexes}]
	$(a)$ By construction, for any even $n \geq 2$, the number of singular couples $(a,b) \in (\Z/n\Z)^4$ is the number of couples $(a',b') \in (\Z/2\Z)^4$ such that $a' {}^t b' = 1 \mod 2$, and we readily see there are exactly six of them, namely 
	\[
	(0101), (1010), (1101), (1110), (1011) \textrm{  and  }(0111). 
	\]
	For $(b)$ and $(c)$, the modularity of the function comes from Definition-Proposition \ref{defipropcomplexthetafunctions} $(c)$ hence we only have to prove that it is nonzero when $(a,b)$ is regular. To do this, we will use the Fourier expansion of this modular form (for more details on Fourier expansions of Siegel modular forms, see chapter 4 of \cite{Klingen}), and simply prove that it has nonzero coefficients. This is also how we will prove the $\vartheta_{a,b}$ are distinct.
	
	To shorten the notations, given an initial couple $(a,b) \in (\Z/n\Z)^4$, we consider instead $(\tilde{a}/n, \tilde{b}/n) \in \Q^4$ for some lift $(\tilde{a}, \tilde{b})$ of $(a,b)$ in $\Z^4$) and by abuse of notation we renote it $(a,b)$ for simplicity. Regularity of the couple translates into the fact that $(a,b)$ is different from six possibles values modulo $\Z^4$, namely 
	\[
	\left(0 \frac{1}{2}0\frac{1}{2}\right),\left(\frac{1}{2}0\frac{1}{2}0 \right),\left(\frac{1}{2}\frac{1}{2}0\frac{1}{2}\right), \left(\frac{1}{2}\frac{1}{2}\frac{1}{2}0\right), \left(\frac{1}{2}0\frac{1}{2}\frac{1}{2}\right)\left(0\frac{1}{2}\frac{1}{2}\frac{1}{2}\right)
	\]
	by $(a)$, which we will assume now. We also fix $n \in \N$ even such that $(na,nb) \in \Z^4$. 
	
	Recall that 
	\begin{equation}
	\label{eqformulationsimplevarthetapourdevFourier}
		\vartheta_{a,b} (\tau) = e^{i \pi a^t b} \sum_{k \in \Z^2} e^{ i \pi (k+a) \tau{}^t(k+a) + 2 i \pi k^t b} 
	\end{equation}
by \eqref{eqthetaabenfonctiontheta} and \eqref{eqdefthetaconstantes}. Therefore, for any symmetric matrix $S \in M_2(\Z)$ such that $S/(2n^2)$ is half-integral (i.e. with integer coefficients on the diagonal, and half-integers otherwise), we have 
\[
	\forall \tau \in \Hcal_2, \quad \vartheta_{a,b} (\tau + S) = \vartheta_{a,b} (\tau),
\]
because for every $k \in \Z^2$, 
\[
	(k+a)S^t (k+a) \in 2 \Z.
\]
Hence, the function $\vartheta_{a,b}$ admits a Fourier expansion of the form 
\[
	\vartheta_{a,b} (\tau) = \sum_{T} a_T e^{ 2 i \pi \Tr(T\tau)},
\]
where $T$ runs through all the matrices of $S_2(\Q)$ such that $(2n^2) T$ is half-integral. This Fourier expansion is unique, because for any $\tau \in \Hcal_2$ and any $T$, we have
\[
	(2 n^2) a_T = \int_{[0,1]^4} \vartheta_{a,b} (\tau +x) e^{- 2 i \pi \Tr(T(\tau+x))} dx.
\]
In particular, the function $\vartheta_{a,b}$ is zero if and only if all its Fourier coefficients $a_T$ are zero, hence we will directly compute those, which are almost directly given by \eqref{eqformulationsimplevarthetapourdevFourier}. For $a=(a_1,a_2) \in \Q^2$ and $k=(k_1,k_2) \in \Z^2$, let us define 
\[
	T_{a,k} = \begin{pmatrix}
	          	(k_1+a_1)^2 & (k_1+a_1)(k_2 + a_2) \\ (k_1+a_1)(k_2+a_2) & (k_2+a_2)^2
	          \end{pmatrix},
\]
so that 
\begin{equation}
\label{eqpresquedevFouriervartheta}
	\vartheta_{a,b} (\tau) = e^{ i \pi a{}^t b} \sum_{k \in \Z^2} e^{ 2 i \pi k {}^t b} e^{ i \pi \Tr(T_{a,k} \tau)}
\end{equation}
by construction. It is not yet exactly the Fourier expansion, because we have to gather the $T_{a,k}$ giving the same matrix $T$ (and this is where we will use regularity). Clearly, 
\[
	T_{a,k} = T_{a',k'} \Llra (k+a) = \pm (k' +a').
\]
If $2a \notin \Z^2$, the function $k \mapsto T_{a,k}$ is injective, so \eqref{eqpresquedevFouriervartheta} is the Fourier expansion of $\vartheta_{a,b}$, with clearly nonzero coefficients, hence $\vartheta_{a,b}$ is nonzero.

If $2a = A \in \Z^2$, for every $k,k'\in \Z^2$, we have $(k+a) = \pm (k'+a)$ if and only if $k=k'$ or $k+k' = A$, so the Fourier expansion of $\vartheta_{a,b}$ is 
\begin{equation}
	\label{eqpdevFouriervarthetamauvaiscas}
	\vartheta_{a,b} (\tau) = \frac{e^{i \pi a^t b}}{2} \sum_{T} \sum_{\substack{k,k' \in \Z^2 \\ T_{k,a} = T_{k',a} = T}} (e^{ 2 i \pi k^t b} + e^{ 2 i \pi (-A-k)^t b}) e^{ i \pi \Tr(T \tau)}.
\end{equation}
Therefore, the coefficients of this Fourier expansion are all zero if and only if, for every $k \in \Z^2$, 
\[
	e^{ 2 i \pi (2 k + A)^t b} = -1,
\]
i.e. if and only if $b \in (1/2) \Z$ and $(-1)^{4 a^t b} = -1$, and this is exactly singularity of the couple $(a,b)$ which proves $(b)$.

Now, let $(a,b)$ and $(a',b')$ in $(1/n) \Z^4$ regular couples (translated in $\Q^4$ as above), such that $(na,nb)$ and $(na',nb')$ modulo $\Z^4$ have the same associated theta divisor on $A_2(n)^S_\C$. Then, the function 
\[
	\frac{\vartheta_{a,b}^{8n}}{\vartheta_{a',b'}^{8n}}
\]
induces a meromorphic function on $A_2(n)^S_\C$ whose divisor is $0$ hence a constant function, which implies that $\vartheta_{a,b} = \lambda \vartheta_{a',b'}$  for some $
\lambda \in \C^*$.
As these functions depend (up to a constant) only on $(a,b)$ and $(a',b') \! \mod \Z^4$, one can assume that all the coefficients of $(a,b)$ and $(a',b')$ belong to $[-1/2,1/2[$, and we assume first that $a,a' \notin (1/2)\Z^2$. Looking at the Fourier expansions \eqref{eqpresquedevFouriervartheta} gives that for every $k \in \Z^2$, 
\[
e^{i \pi a^t b + 2 i \pi k^t b} = \lambda e^{i \pi a'^t b' + 2 i \pi k^t b'}.
\]
Hence, we have $b= b' \mod \Z^2$ which in turns give $a= a' \mod \Z^2$ The same argument when $a$ or $a'$ belongs to $(1/2) \Z^2$ gives by \eqref{eqpdevFouriervarthetamauvaiscas} the possibilities $b=-b'$ and $a=-a' \mod \Z^4$.

Hence, we proved that if $\vartheta_{a,b}$ and $\vartheta_{a',b'}$ are proportional, then $(a,b)= \pm (a',b') \mod \Z^4$,and the converse is straightforward.
\end{proof}

These divisors have the following properties.

\begin{prop}[Properties of the $(D_{n,a,b})_\C$]
\hspace*{\fill}
\label{propproprietesDnabcomplexes}

Let $n \in \N_{\geq 2}$ even.

$(a)$ For every regular $(a,b) \in (\Z/n\Z)^4$, the divisor $(D_{n,a,b})_\C$ is ample.

$(b)$ For $n=2$, the ten divisors $(D_{2,a,b})_\C$ are set-theoretically pairwise disjoint outside the boundary $\partial A_2(2)_\C := A_2(2)^S_\C \backslash A_2(2)_\C$, and their union is exactly the set of moduli of  products of elliptic curves (with any symplectic basis of the 2-torsion).

$(c)$  For $(A,\lambda,\alpha_n)$ a principally polarised complex abelian surface with symplectic structure of level $n$ : 
\begin{itemize}
	\item If $(A,\lambda)$ is a product of elliptic curves, the moduli of $(A,\lambda,\alpha_n)$ belongs to exactly $n^2 - 3$ divisors $(D_{n,a,b})_\C$.
	\item Otherwise, the point $(A,\lambda,\alpha_n)$ belongs to at most $(\sqrt{2}/2)n^2 + 1/4$ divisors $(D_{n,a,b})_\C$.
\end{itemize}
\end{prop}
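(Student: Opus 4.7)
For part (a), the plan is to realize $\vartheta_{a,b}^{8n}$ as a global section of an ample line bundle. Since $\vartheta_{a,b}^{8n}$ is a nonzero Siegel modular form of degree $2$, weight $4n$ and level $n$ by Definition-Proposition \ref{defipropdivthetaA2ncomplexes}(b), Definition-Proposition \ref{defipropuniformcomplexabvar} identifies it with a global section of the $\Q$-line bundle $L^{\otimes 4n}$ on $A_2(n)^S_\C$. As $L$ is an ample $\Q$-line bundle by Definition-Proposition \ref{defipropSatakecompactification}(c), its zero divisor $(D_{n,a,b})_\C$ is $\Q$-linearly equivalent to $4n \cdot L$ and is therefore ample.

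For parts (b) and (c), the common setup is to translate vanishing of theta constants into membership of torsion points on theta divisors. For $\tau \in \Hcal_2$, a regular $(a,b) \in (\Z/n\Z)^4$ and any lift $(\widetilde{a},\widetilde{b}) \in \Z^4$, Proposition \ref{propliendivthetafonctiontheta} says that $\tau$ lies in $(D_{n,a,b})_\C$ if and only if $\pi_\tau(\widetilde{a}\tau/n + \widetilde{b}/n)$ belongs to the canonical theta divisor $\Theta := \Theta_{A_\tau, \lambda_\tau, \alpha_{\tau,2}}$. Since by Definition-Proposition \ref{defipropdivthetaA2ncomplexes}(c) the divisors are in bijection with regular couples modulo the identification $(a,b) \sim -(a,b)$, counting divisors through $\tau$ amounts to counting $[-1]$-orbits of regular $n$-torsion points on $\Theta$; note that 2-torsion orbits are singletons while the others have size two.

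Suppose first that $A_\tau$ is a product of elliptic curves (possibly after a finite extension of scalars). By Proposition \ref{propdivthetaproduitCE}, $\Theta$ contains exactly seven 2-torsion points: six singular (those satisfying $a\,{}^t b = 1 \!\!\mod 2$, which automatically lie on $\Theta$ via the Igusa correspondence) and a single regular one, namely the singular point $(x_1,x_2)$ of the reducible divisor; moreover $\Theta$ contains exactly $2(n^2-4)$ $n$-torsion points of order strictly greater than $2$, all of which are automatically regular since singular couples are 2-torsion. Counting $[-1]$-orbits then yields $1 + (n^2-4) = n^2-3$, proving the first bullet of (c); specialized to $n=2$ this produces exactly one divisor through each moduli of a product.

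Assume finally that $A_\tau$ is not a product; by Proposition \ref{propsurfabnonproduitCEetdivtheta} it becomes, after a finite extension, the jacobian of a genus-$2$ hyperelliptic curve, and $\Theta$ is a translate of the Albanese image $\widetilde{C}$. By Proposition \ref{propnombrepointsdivthetajacobienne}(b), the six 2-torsion points on $\Theta$ are the images of the Weierstrass points, and they must coincide with the six singular characteristics: indeed the Igusa correspondence forces all six singular 2-torsion points to lie on $\Theta$, leaving no room for others. Thus $\Theta$ has no regular 2-torsion point, and by Proposition \ref{propnombrepointsdivthetajacobienne}(c) contains at most $\sqrt{2}\,n^2 + 1/2$ $n$-torsion points in total; halving gives the bound $(\sqrt{2}/2)\,n^2 + 1/4$ of (c). Specialized to $n=2$, this count is zero, completing (b) since each non-boundary moduli then lies on at most one of the ten divisors. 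The delicate point of the argument is precisely this identification of Weierstrass points with singular characteristics, which rests on the canonicity of $\Theta_{A_\tau, \lambda_\tau, \alpha_{\tau,2}}$ under the Igusa correspondence.
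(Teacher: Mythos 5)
Your proposal is correct and follows essentially the same route as the paper: ampleness in $(a)$ via realizing $(D_{n,a,b})_\C$ as the zero divisor of a section of the ample $\Q$-line bundle $L^{\otimes 4n}$, and $(b)$, $(c)$ via Propositions \ref{propliendivthetafonctiontheta}, \ref{propdivthetaproduitCE}, \ref{propsurfabnonproduitCEetdivtheta} and \ref{propnombrepointsdivthetajacobienne}, counting $[-1]$-orbits of regular $n$-torsion points on the canonical theta divisor. Your orbit count is in fact spelled out more carefully than in the paper (whose intermediate formula for the product case contains a slip, though the final count $n^2-3$ agrees), and your observation that halving is legitimate in the jacobian case because no regular $2$-torsion point lies on $\Theta$ is exactly the point the paper makes parenthetically.
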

 
\begin{proof}
\hspace*{\fill}

$(a)$ The divisor $(D_{n,a,b})_\C$ is by definition the Weil divisor of zeroes of a Siegel modular form of order 2, weight $4n$ and level $n$, hence of a section of $L^{\otimes 4n}$ on $A_2(n)_\C^S$. As $L$ is ample on $A_2(n)^S_\C$ (Definition-Proposition \ref{defipropSatakecompactification} $(c)$), the divisor $(D_{n,a,b})_\C$ is ample.

Now, we know that every complex pair $(A,\lambda)$ is isomorphic to some $(A_\tau,\lambda_\tau)$ with $\tau \in \Hcal_2$ (Definition-Proposition \ref{defipropuniformcomplexabvar}). If $(A,\lambda)$ is a product of elliptic curves, the theta divisor of $(A,\lambda,\alpha_2)$ contains exactly seven 2-torsion points (Proposition \ref{propdivthetaproduitCE}), only one of comes from a regular pair, i.e. $(A,\lambda,\alpha_2)$ is contained in exactly one of the ten divisors. If $(A,\lambda)$ is not a product of elliptic curves, it is a jacobian (Proposition \ref{propsurfabnonproduitCEetdivtheta}) and the theta divisor of $(A,\lambda,\alpha_2)$ only contains the six points coming from singular pairs (Proposition \ref{propnombrepointsdivthetajacobienne}) i.e. $(A,\lambda,\alpha_2)$ does not belong to any of the ten divisors, which proves $(b)$.

To prove $(c)$, we use the same propositions for general $n$, keeping in mind that we only count as one the divisors coming from opposite values of $(a,b)$ : for products of elliptic curves, this gives $(2n^2 - 16)/2 + 7$ divisors (the 7 coming from the 2-torsion), and for jacobians, this gives $(\sqrt{2}/2)n^2 + 1/4$ (there are no nontrivial 2-torsion points to consider here).
\end{proof}

We will now give the natural divisors extending $(D_{n,a,b})_\C$ on the integral models $\Acal_2(n)$ (Definition-Proposition \ref{defipropalgmodulispaces}).

\begin{defi}
\hspace*{\fill}
\label{defidivthetasurespacemoduleentier}

Let $n \in \N_{\geq 2}$ even.

For every regular $(a,b) \in (\Z/n\Z)^4$, the divisor $(D_{n,a,b})_\C$ is the geometric fiber at $\C$ of an effective Weil divisor $D_{n,a,b}$ on $\Acal_2(n)$, such that the moduli of a triple $(A,\lambda,\alpha_n)$ (on a field $k$ of characteristic prime to $n$) belongs to $D_{n,a,b} (k)$ if and only if the point of $A[n](\overline{k})$ of coordinates $(a,b)$ for $\alpha_n$ belongs to the theta divisor $\Theta_{A,\lambda,\alpha_n}$ (Definition-Proposition \ref{defipropthetadiviseurcanonique}).

\end{defi}

\begin{proof}
\hspace*{\fill}

This amounts to giving an algebraic construction of the $D_{n,a,b}$ satisfying the wanted properties. The following arguments are extracted from Remark I.5.2 of \cite{ChaiFaltings}. Let $\pi : A \rightarrow S$ an abelian scheme and $\Lcal$ a symmetric invertible sheaf on $A$, relatively ample on $S$ and inducing a principal polarisation on $A$. If $s : S \rightarrow A$ is a section of $A$ on $S$, the evaluation at $s$ induces an $\Ocal_S$-module isomorphism between $\pi_*\Lcal$ and $s^*\Lcal$. Now, if $s$ is of $n$-torsion in $A$, for $e : S \rightarrow A$ the zero section, the sheaf $(s^* \Lcal)^{\otimes 2n}$ is isomorphic to $(e^* \Lcal)^{\otimes 2n}$, i.e. trivial. We denote by $\omega_{A/S}$ the invertible sheaf on $S$ obtained as the determinant of the sheaf of invariant differential forms on $A$, and the computations of Theorem I.5.1 and Remark I.5.2 of \cite{ChaiFaltings} give $8 \pi_* \Lcal = - 4 \omega_{A/S}$ in $\Pic(A/S)$. Consequenltly, the evaluation at $s$ defines (after a choice of trivialisation of $(e^* \Lcal)^{\otimes 2n}$ and putting to the power $8n$) a section of $\omega_{A/S}^ {\otimes 4n}$. Applying this result on the universal abelian scheme (stack if $n \leq 2$) $\Xcal_2(n)$ on $\Acal_2(n)$ , for every $(a,b) \in (\Z/n\Z)^4$, the section defined by the point of coordinate $(a,b)$ for the $n$-structure on $\Xcal_2(n)$ induces a global section $s_{a,b}$ of $\omega_{\Xcal_2(n)/\Acal_2(n)}^{\otimes 4n}$, and we define $D_{n,a,b}$ as the Weil divisor of zeroes of this section. It remains to check that it satisfies the good properties. 

Let $(A,\lambda,\alpha_n)$ be a triple over a field $k$ of characteristic prime to $n$, and $L$ the ample line bundle associated to it by Definition-Proposition \ref{defipropthetadiviseurcanonique}. By construction, its moduli belongs to $D_{n,a,b}$ if and only if the unique (up to constant) nonzero section vanishes at the point of $A[n]$ of coordinates $(a,b)$ in $\alpha_n$, hence if and only if this point belongs to $\Theta_{A,\lambda,\alpha_n}$.

Finally, we see that the process described above applied to the universal abelian variety $\Xcal_2(n)_\C$ of $\Acal_2(n)_\C$ (by means of explicit description of the line bundles as quotients) gives (up to invertible holomorphic functions) the functions $\vartheta_{\widetilde{a}/n,\widetilde{b}/n}^{8n}$, which proves that $(D_{n,a,b})_\C$ is indeed the geometric fiber of $D_{n,a,b}$ (it is easier to see that their complex points are the same, by Proposition \ref{propproprietesDnabcomplexes} $(c)$ and the above characterisation applied to the field $\C$).

If one does not want to use stacks for $n=2$, one can consider for $(a,b) \in (\Z/2\Z)^4$ the divisor $D_{4,2a,2b}$ which is the pullback of $D_{2,a,b}$ by the degeneracy morphism $A_2(4) \rightarrow A_2(2)$.
\end{proof}

\subsection{Tubular Runge theorems for abelian surfaces and their theta divisors}
\label{subsectubularRungethmabsur}
We can now prove a family of tubular Runge theorems for to the theta divisors $D_{n,a,b}$ (for even $n \geq 2$).

We will state the case $n=2$ first because its moduli interpretation is easier but the proofs are the same, as we explain below.

In the following results, the \textit{boundary} of $A_2(n)^S_\C$ is defined as $\partial A_2(n)^S_\C := A_2(n)^S_\C \backslash A_2(n)_\C$. 

\begin{thm}[Tubular Runge for products of elliptic curves on $\Acal_2(2)^S$]
\hspace*{\fill}
\label{thmtubularRungeproduitCE}

	Let $U$ be an open neighbourhood of $\partial A_2(2)^S_\C$ in $A_2(2)^S_\C$ for the natural complex topology.
	
	For any such $U$, we define $\Ecal(U)$ the set of moduli $P$ of triples $(A,\lambda,\alpha_2)$ in $\Acal_2(2) (\Qb)$ such that (choosing $L$ a number field of definition of the moduli) : 
	\vspace{-0.2cm}
	\begin{itemize}
		\item The abelian surface $A$ has potentially good reduction at every finite place $w \in M_L$ (\textbf{tubular condition for finite places}).
		\vspace{-0.2cm}
		\item For any embedding $\sigma : L \rightarrow \C$, the image $P_\sigma$ of $P$ in $\Acal_2(2)_\C$ is outside of $U$ (\textbf{tubular condition for archimedean places}).
		\vspace{-0.2cm}
		\item The number $s_L$ of non-integrality places of $P$, i.e. places $w \in M_L$ such that 
		\begin{itemize}
		\vspace{-0.2cm}
		\item either $w$ is above $M_L^{\infty}$ or $2$,
		\vspace{-0.2cm}
		\item or the semistable reduction modulo $w$ of $(A,\lambda)$ is a product of elliptic curves
		\end{itemize}
		\vspace{-0.2cm}
		satisfies the \textbf{tubular Runge condition} 
		\vspace{-0.2cm}
		\[
			s_L < 10.
		\]
	\end{itemize}
	\vspace{-0.2cm}
	Then, for every choice of $U$, the set $\Ecal(U)$ is \textbf{finite}.
\end{thm}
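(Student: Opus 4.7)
The plan is to derive this theorem as a direct instance of Theorem \ref{thmRungetubulaire}(R1), with the data $K = K' = \Q$, $S_0 = \{2, \infty\}$, $\Xcal = \Acal_2(2)^S$ (over $\Z[1/2]$), the ten effective divisors $D_1, \ldots, D_{10}$ being the theta divisors $D_{2,a,b}$ indexed by representatives of the regular couples $(a,b) \in (\Z/2\Z)^4$ modulo the identification $(a,b) \sim -(a,b)$ (Definition-Proposition \ref{defipropdivthetaA2ncomplexes}), and $Y = \partial A_2(2)^S := A_2(2)^S \setminus A_2(2)$ on the generic fibre. By Proposition \ref{propproprietesDnabcomplexes}(a) each $D_{2,a,b}$ is ample, and since the symplectic level $2$ structure is $\Q$-rational, each divisor is itself defined over $\Q$, giving $r_L = 10$ for every finite extension $L/\Q$. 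By Proposition \ref{propproprietesDnabcomplexes}(b) the ten divisors are set-theoretically pairwise disjoint outside $Y$, so $m_Y = 1$, and the tubular Runge condition $m_Y |S_L| < r_L$ of Theorem \ref{thmRungetubulaire} reduces to $|S_L| < 10$.

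Next I construct a tubular neighbourhood $\Vcal = (V_w)_{w \in M_{\Qb}}$ of $Y$ whose exclusion matches the two local conditions of $\Ecal(U)$. Fix a projective embedding of $A_2(2)^S$ and homogeneous generators $g_1, \ldots, g_s$ of the ideal of $Y$. At a finite place $w$ not above $2$ I use the zero $M_\Q$-constant, so that by Proposition \ref{proplienreductionpointssvaluation} the set $V_w$ consists exactly of the points of $\Acal_2(2)^S(\Qb_w)$ reducing into $\partial \Acal_2(2)$ modulo $w$. At an archimedean place $w$, the complement $A_2(2)^S_\C \setminus U$ is compact and disjoint from $Y_\C$, so each function $\log |g_j(x)| - (\deg g_j) \log \|x\|$ is bounded above by some strictly negative number on it; choosing the archimedean value $c_\infty$ slightly smaller than this bound forces $V_w \subseteq U$ for every archimedean $w$. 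At finite places above $2$, where the integral model is unavailable, I calibrate the $M_\Q$-constant via Proposition \ref{propcarvoistub} so that potentially good reduction at such $w$ (assumed throughout $\Ecal(U)$) already forces $P \notin V_w$.

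Now I translate the three conditions defining $\Ecal(U)$. The archimedean condition $P_\sigma \notin U$ gives $P \notin V_\sigma$ by construction. The finite-place condition of potentially good reduction combined with properness of $\Acal_2(2)^S$ and the openness of $\Acal_2(2) \subset \Acal_2(2)^S$ ensures, at every finite place $w \nmid 2$, that $P$ reduces into $\Acal_2(2)$ rather than its boundary (verified after a finite extension and then transferred back by openness), so $P \notin V_w$; the analogous statement at places above $2$ holds by our choice of $\Vcal$. For the integrality condition, by Definition \ref{defidivthetasurespacemoduleentier} the moduli point $P = (A, \lambda, \alpha_2)$ reduces into some $D_{2,a,b}$ at a finite $w \nmid 2$ of good reduction if and only if a regular $2$-torsion point of the reduction lies on its theta divisor; combining Propositions \ref{propsurfabnonproduitCEetdivtheta}, \ref{propdivthetaproduitCE}(a) and \ref{propnombrepointsdivthetajacobienne}(b) this happens precisely when the (semistable) reduction is a product of elliptic curves. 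Hence the set of non-integrality places of $P$ for $D = D_1 \cup \cdots \cup D_{10}$ coincides with the set $S_L$ described in the theorem, and $|S_L| = s_L$.

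Putting the three translations together, the tubular Runge condition $|S_L| < 10$ becomes exactly the third hypothesis $s_L < 10$, and Theorem \ref{thmRungetubulaire}(R1) yields the finiteness of
\[
\bigcup_{(L, S_L)} \left\{ P \in (\Acal_2(2)^S \setminus D)(\Ocal_{L, S_L}) \; | \; P \notin \Vcal \right\},
\]
which by the translations above is precisely $\Ecal(U)$. The main technical obstacle I anticipate is the calibration of $\Vcal$ at finite places above $2$: the absence of a good integral model of $\Acal_2(2)^S$ over $\Z_2$ means that the bridge between the purely moduli-theoretic assumption of potentially good reduction and the function-theoretic definition of $V_w$ must be built by hand, either through an auxiliary integral model obtained by scheme-theoretic closure in projective space, or through an \emph{ad hoc} boundedness argument for theta constants on abelian surfaces with potentially good reduction at $2$.
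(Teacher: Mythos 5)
Your proposal follows essentially the same route as the paper: both apply Theorem \ref{thmRungetubulaire} (R1) to $\Acal_2(2)^S$ with the ten ample theta divisors $D_{2,a,b}$, take $Y = \partial A_2(2)^S$ so that $m_Y = 1$ and $r_L = 10$ by Proposition \ref{propproprietesDnabcomplexes}, and translate the three defining conditions of $\Ecal(U)$ into ``$P$ is $S_L$-integral for $D$ and $P \notin \Vcal$'' exactly as you do (your compactness calibration of the archimedean component, giving $V_w \subseteq U$, is the correct direction of containment). The one step you flag as an open obstacle --- linking potentially good reduction at places above $2$ to the function-theoretic definition of $V_w$ in the absence of the integral model --- is resolved in the paper by precisely the second fix you suggest: potentially good reduction at any finite place is characterised by integrality of suitable quotients of the Igusa invariants, which are modular forms for the full group $\Sp_4(\Z)$; adding those that vanish on the boundary to the homogeneous generators $g_1,\dots,g_s$ and taking $c_v = 0$ at places above $2$ completes the calibration of $\Vcal$.
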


\begin{thm}[Tubular Runge for theta divisors on $\Acal_2(n)^S$]
\hspace*{\fill}
\label{thmtubularRungegeneral}

Let $n \geq 4$ even.

	Let $U$ be an open neighbourhood of $\partial A_2(n)^S_\C$ in $A_2(n)^S_\C$ for the natural complex topology.
	
	For any such $U$, we define $\Ecal(U)$ the set of moduli $P$ of triples $(A,\lambda,\alpha_2)$ in $\Acal_2(n) (\Qb)$ such that (choosing $L \supset \Q(\zeta_n)$ a number field of definition of the triple) : 
	\vspace{-0.2cm}
	\begin{itemize}
		\item The abelian surface $A$ has potentially good reduction at every place $w \in M_L^{\infty}$ (\textbf{tubular condition for finite places}).
		\vspace{-0.2cm}
		\item For any embedding $\sigma : L \rightarrow \C$, the image $P_\sigma$ of $P$ in $\Acal_2(n)_\C$ is outside of $U$ (\textbf{tubular condition for archimedean places}).
		\vspace{-0.2cm}
		\item The number $s_P$ of non-integrality places of $P$, i.e. places $w \in M_L$ such that 
		\vspace{-0.2cm}
		\begin{itemize}
		\vspace{-0.2cm}
		\item either $w$ is above $M_L^{\infty}$ or a prime factor of $n$,
		\vspace{-0.2cm}
		\item or the theta divisor of the semistable reduction modulo $w$ of $(A,\lambda,\alpha_n)$ contains an $n$-torsion point which is not one of the six points coming from odd characteristics,
		\end{itemize}
		\vspace{-0.2cm}
		satisfies the \textbf{tubular Runge condition} 
		\[
			(n^2 - 3) s_P < \frac{n^4}{2} + 2.
		\]
	\end{itemize}
	\vspace{-0.2cm}
	Then, for every choice of $U$, the set of points $\Ecal(U)$ is \textbf{finite}.
\end{thm}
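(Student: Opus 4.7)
The plan is to apply Theorem \ref{thmRungetubulaire}~(R1) with base field $K=K'=\Q(\zeta_n)$ (the theta divisors being individually defined over $K$, so $\Gal(K'/K)=1$ permutes them trivially), $S_0 = M_K^\infty \cup \{v \mid n\}$, ambient scheme $\Xcal = \Acal_2(n)^S$, closed subscheme $Y = \partial A_2(n)^S_K$, and as $D_1,\dots,D_r$ the $r = n^4/2 + 2$ theta divisors $D_{n,a,b}$ of Definition \ref{defidivthetasurespacemoduleentier}, indexed by regular couples $(a,b)\in(\Z/n\Z)^4$ modulo $(a,b)\sim -(a,b)$. These are effective Cartier divisors (being divisors of zeros of global sections of an extension to $\Acal_2(n)^S$ of $\omega^{\otimes 4n}_{\Xcal_2(n)/\Acal_2(n)}$) and are ample by Proposition \ref{propproprietesDnabcomplexes}~(a).

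To verify (H2) I would show $m_Y \leq n^2-3$, i.e.\ that no geometric point of $\Acal_2(n)$ lies on more than $n^2-3$ of the $D_{n,a,b}$. The arguments of Propositions \ref{propdivthetaproduitCE} and \ref{propnombrepointsdivthetajacobienne} are purely algebraic over any field of characteristic prime to $2n$, hence also valid over $\Kb$: they give exactly $n^2-3$ divisors through a product of elliptic curves and at most $\tfrac{\sqrt{2}}{2}n^2+\tfrac{1}{4}$ through a jacobian. A quick numerical check shows $n^2-3 \geq \tfrac{\sqrt{2}}{2}n^2+\tfrac{1}{4}$ as soon as $n^2 \geq 13/(4-2\sqrt{2}) \approx 11.1$, so for every even $n \geq 4$. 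Hence $m_Y = n^2-3$ and the tubular Runge condition $m_Y|S_L| < r_L$ matches the stated $(n^2-3)s_P < n^4/2+2$.

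Next I would construct a tubular neighbourhood $\Vcal = (V_w)_w$ of $Y$, parametrized by an $M_K$-constant $(c_v)$ as in Definition \ref{defvoistub}, whose exclusion locus matches the tubular hypotheses of the statement. At an archimedean place $v$, choosing $c_v$ sufficiently negative shrinks $V_w$ into an arbitrarily small neighbourhood of $Y(\C)$; by compactness of $A_2(n)^S_\C$ I can arrange $V_w \subset U$ for every embedding extending $v$, matching the archimedean tubular condition. At a finite place $v \notin S_0$, taking $c_v=0$ makes $V_w$ the set of $\Kb$-points reducing into the Satake boundary modulo $w$ (Proposition \ref{proplienreductionpointssvaluation}); combined with the semistable reduction theorem and the properness of $\Acal_2(n)^S$, the condition $P \notin V_w$ translates into $A$ having potentially good reduction at $w$, exactly the finite-place tubular condition.

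Finally, for $w \notin S_0$ with $P \notin V_w$, Proposition \ref{propreductionamplegros}~(a) combined with the moduli description of Definition \ref{defidivthetasurespacemoduleentier} expresses integrality of $P$ at $w$ with respect to $\bigcup D_{n,a,b}$ as the condition that the theta divisor of the reduction of $(A,\lambda,\alpha_n)$ modulo $w$ contains no $n$-torsion point of regular characteristic. This is precisely the second case in the definition of non-integrality place in the statement, so the "non-integrality places" coincide with the set $S_L$ in Theorem \ref{thmRungetubulaire}, and finiteness of $\Ecal(U)$ follows from (R1). The main technical obstacle is the identification of reduction into $\partial\Acal_2(n)^S$ with potentially bad reduction of $A$; this requires carefully combining the moduli interpretation of the Satake boundary with the semistable reduction theorem, and it is precisely to avoid interference from degeneration of the level-$n$ structure itself that the primes above $n$ are lumped into $S_0$.
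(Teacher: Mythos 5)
Your overall strategy is the same as the paper's: apply Theorem \ref{thmRungetubulaire} (R1) to $\Acal_2(n)^S$ over $\Z[\zeta_n,1/n]$ with the $n^4/2+2$ ample divisors $D_{n,a,b}$ and $Y=\partial A_2(n)^S$, compute $m_Y=n^2-3$ from Propositions \ref{propdivthetaproduitCE} and \ref{propnombrepointsdivthetajacobienne} (your numerical check that the product-of-elliptic-curves count dominates the jacobian bound $\tfrac{\sqrt2}{2}n^2+\tfrac14$ for even $n\ge 4$ is correct and matches Proposition \ref{propproprietesDnabcomplexes}), and translate the hypotheses defining $\Ecal(U)$ into ``$P\notin\Vcal$ and $P$ is integral outside the non-integrality places''. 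Your treatment of the archimedean places (take $c_v$ negative enough that $V_w\subseteq U$, using compactness of the complement of $U$) and of the finite places prime to $n$ (via Proposition \ref{proplienreductionpointssvaluation} and the moduli interpretation of Definition \ref{defidivthetasurespacemoduleentier}) is correct.

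There is one genuine gap: you never verify $P\notin V_w$ at the finite places $w$ lying above a prime factor of $n$. Definition \ref{defhorsdunvoistub} requires $P\notin V_w$ for \emph{every} $w\in M_{\Kb}$, and $V_w$ is nonempty at such $w$ (it always contains the $\Kb$-points of $Y$); lumping these places into $S_0$ only affects the integrality bookkeeping and the count $s_P$, it does not exempt them from the tubular-neighbourhood requirement. At these places the moduli-theoretic description of ``reducing into the boundary'' is unavailable, since the model only lives over $\Z[\zeta_n,1/n]$, so the hypothesis of potentially good reduction must be converted into a lower bound on the functions cutting out $Y$ by some other device. The paper does this by adjoining to the generators $g_1,\dots,g_s$ suitable level-one Igusa modular forms vanishing on the boundary, whose integrality properties detect potentially good reduction in every residue characteristic, and taking $c_v=0$ at these places (this is carried out explicitly for $n=2$ in subsection \ref{subsecplacesabove2}). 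Your proof needs this, or an equivalent step, to legitimately invoke Theorem \ref{thmRungetubulaire}.
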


\begin{rem}

We put an emphasis on the conditions given in the theorem to make it easier to identify how it is an application of our main result, Theorem \ref{thmRungetubulaire}. The tubular conditions (archimedean and finite) mean that our points $P$ do not belong to some tubular neighbourhood $\Vcal$ of the boundary. We of course chose the boundary as our closed subset to exclude because of its modular interpretation for finite places. The places above $M_L^{\infty}$ or a prime factor of $n$ are automatically of non-integrality for our divisors because the model $\Acal_2(n)$ is not defined at these places. Finally, the second possibility to be a place of non-integrality straightforwardly comes from the moduli interpretation of the divisors $D_{n,a,b}$ (Definition \ref{defidivthetasurespacemoduleentier}). All this is detailed in the proof below.

To give an example of how we can obtain an explicit result in practice, we prove in section \ref{sectionexplicitRunge} an explicit (and even theoretically better) version of Theorem \ref{thmtubularRungeproduitCE}.

It would be more satisfying (and easier to express) to give a tubular Runge theorem for which the divisors considered are exactly the irreducible components parametrising the products of elliptic curves. Unfortunately, except for $n=2$, there is a serious obstruction because those divisors are not ample, and there are even reasons to suspect they are not big. We have explained in Remark \ref{remampledifficilepourA2} why proving the ampleness for general divisors on $A_2(n)^S_\C$ is difficult.

It would also be morally satisfying to give a better interpretation of the moduli of $D_{n,a,b}$ for $n >2$, i.e. not in terms of the theta divisor, but maybe of the structure of the abelian surface if possible (nontrivial endomorphisms ? isogenous to products of elliptic curves ?). As far as the author knows, the understanding of abelian surfaces admitting some nontrivial torsion points on their theta divisor is still very limited.

Finally, to give an idea of the margin the tubular Runge condition gives for $n>2$ (in terms of the number of places which are not ``taken'' by the automatic bad places), we can easily see that the number of places of $\Q(\zeta_n)$ which are archimedean or above a prime factor of $n$ is less than $n/2$. Hence, we can find examples of extensions $L$ of $\Q(\zeta_n)$ of degree $n$ such that some points defined on it still can satisfy tubular Runge condition. This is also where using the full strength of tubular Runge theorem is crucial: for $n=2$, one can compute that some points of the boundary are contained in 6 different divisors $D_{2,a,b}$, and for general even $n$, a similar analysis gives that the intersection number $m_{\emptyset}$ is quartic in $n$, which leaves a lot less margin for the places of non-integrality (or even none at all). 
\end{rem}

\begin{proof}[Proof of Theorems \ref{thmtubularRungeproduitCE} and \ref{thmtubularRungegeneral}]
\hspace*{\fill}

As announced, this result is an application of the tubular Runge theorem (Theorem \ref{thmRungetubulaire}) to $\Acal_2(n)^S_{\Q(\zeta_n)}$ (Definition-Proposition \ref{defipropalgmodulispaces}) and the divisors $D_{n,a,b}$ (Definition  \ref{defidivthetasurespacemoduleentier}), whose properties will be used without specific mention. We reuse the notations of the hypotheses of Theorem \ref{thmRungetubulaire} to explain carefully how it is applied.

\textbf{\textit{(H0)}} The field of definition of $A_2(n)^S_\C$ is $\Q(\zeta_n)$, and the ring over which our model $\Acal_2(n)^S$ is built is $\Z [ \zeta_n, 1/n]$, hence $S_0$ is made up with all the archimedean places and the places above prime factors of $n$. There is no need for a finite extension here as all the $D_{n,a,b}$ are divisors on $\Acal_2(n)^S$.

\textbf{\textit{(H1)}} The model $\Acal_2(n)^S_\C$ is indeed normal projective, and we know that the $D_{n,a,b}$ are effective Weil divisors hence Cartier divisors up to multiplication by some constant by Proposition \ref{proprationalPicardSiegel}. For any finite extension $L$ of $\Q(\zeta_n)$, the number of orbits $r_L$ is the number of divisors $D_{n,a,b}$ (as they are divisors on the base model), i.e. $n^4/2 + 2$ (Proposition \ref{propproprietesDnabcomplexes} $(c)$).

\textbf{\textit{(H2)}} The chosen closed subset $Y$ of $\Acal_2(n)^S_\Q(\zeta_n)$ is the boundary, namely 
\[
\partial \Acal_2(n)^S_{\Q(\zeta_n)} = \Acal_2(n)^S_{\Q(\zeta_n)} \backslash \Acal_2(n)_{\Q(\zeta_n)}.
\]
We have to prove that the tubular conditions given above correspond to a tubular neighbourhood. To do this, let $\Ycal$ be the boundary $\Acal_2(n)^S \backslash \Acal_2(n)$ and $g_1, \cdots, g_s$ homogeneous generators of the ideal of definition of $\Ycal$ after having fixed a projective embedding of $\Acal_2(n)$. Let us find an $M_{\Q(\zeta_n)}$-constant such that $\Ecal(U)$ is included in the tubular neighbourhood of $\partial \Acal_2(n)^S_\Q(\zeta_n)$ in $A_2(n)^S_{\Q(\zeta_n)}$ associated to $\Ccal$ and $g_1, \cdots, g_k$. For the places $w$ not above $M_L^\infty$ or a prime factor of $n$, the fact that $P = (A,\lambda,\alpha_n)$ does not reduce in $Y$ modulo $w$ is exactly equivalent to $A$ having potentially good reduction at $w$ hence we can choose $c_v= 0$ for the places $v$ of $\Q(\zeta_n)$ not archimedean and not dividing $n$. For archimedean places, belonging to $U$ for an embedding $\sigma : L \rightarrow \C$ implies that $g_1, \cdots, g_n$ are small, and we just have to choose $c_v$ stricly larger than the maximum of the norms of the $g_i(U \cap V_j)$ (in the natural affine covering $(V_j)_j$ of the projective space), independant of the choice of $v \in M_{\Q(\zeta_n)}^\infty$. Finally, we have to consider the case of places above a prime factor of $n$. To do this, we only have to recall that having potentially good reduction can be given by integrality of some quotients of the Igusa invariants at finite places, and these invariants are modular forms on $\Gamma_2(1)$. We can add those who vanish on the boundary to the homogeneous generators $g_1, \cdots, g_n$ and consider $c_v=0$ for these places as well. This is explicitly done in part \ref{subsecplacesabove2} for $A_2(2)$.

\textbf{\textit{(TRC)}} As said before, there are $n^4/2 + 2$ divisors considered, and their generic fibers are ample by Proposition \ref{propproprietesDnabcomplexes}. Furthermore,  by Propositions \ref{propdivthetaproduitCE} and \ref{propnombrepointsdivthetajacobienne}, outside the boundary, at most $(n^2 - 3)$ can have nonempty common intersection, and this exact number is attained only for products of elliptic curves, (as $n^2 - 3 = 2(n^2 - 4)/2 + 1$, separating the regular 2-torsion pairs and regular non-2-torsion pairs up to $\pm 1$).

This gives the tubular Runge condition 
\[
	(n^2 - 3) s_L < n^4/2 + 2,
\]
which concludes the proof.

For $n=2$, the union of the ten $D_{2,a,b}$ is made up with the moduli of products of elliptic curves, and they are pairwise disjoint outside $\partial A_2(2)$ (Proposition \ref{propproprietesDnabcomplexes} $(b)$), hence the simply-expressed condition $s_L<10$ in this case.
\end{proof}

\section{The explicit Runge result for level two}
\label{sectionexplicitRunge}

To finish this paper, we improve and make explicit the finiteness result of Theorem \ref{thmtubularRungeproduitCE}, as a proof of principle of the method.

Before stating Theorem \ref{thmproduitCEexplicite}, we need some notations. In level two, the auxiliary functions are deduced from the ten even theta constants of characteristic two, namely the functions $\Theta_{m/2} (\tau)$ (notation \eqref{eqdefseriethetaab}), with the quadruples $m$ going through 
\begin{equation}
\label{eqevenchartheta}
E = \{(0000),(0001),(0010),(0011),(0100),(0110),(1000),(1001),(1100),(1111) \}
\end{equation}
(see subsections \ref{subsecthetadivabvar} and \ref{subsecthetadivabsur} for details).
We recall (\cite{vdG82}, Theorem 5.2) that these functions define an embedding
\begin{equation}
\label{eqdefplongementpsi}
\fonction{\psi}{A_2(2)}{\P^9}{\overline{\tau}}{(\Theta_{m/2}^4 (\tau))_{m \in E}}
\end{equation}
which induces an isomorphism between $A_2(2)^S_\C$ and the subvariety of $\P^9$ (with coordinates indexed by $m \in E$) defined by the linear equations 
\begin{eqnarray}
x_{1000} - x_{1100} + x_{1111} - x_{1001} & = & 0 \\
x_{0000} - x_{0001} - x_{0110} - x_{1100} & = & 0 \\
x_{0110} - x_{0010} + x_{1111} + x_{0011} & = & 0 \\
x_{0100} - x_{0000} + x_{1001} + x_{0011} & = & 0 \\
x_{0100} - x_{1000} + x_{0001} - x_{0010} & = & 0 
\end{eqnarray}
(which makes it a subvariety of $\P^4$) together with the quartic equation 
\begin{equation}
\left( \sum_{m \in E} x_m^2 \right)^2 - 4 \sum_{m \in E} x_m^4 = 0.
\end{equation}

\begin{rem}
	For the attentive reader, the first linear equation has sign $(+1)$ in $x_{1111}$ whereas it is $(-1)$ in \cite{vdG82}, as there seems to be a typographic mistake there : we have realised it during our computations on Sage in part \ref{subsecplacesabove2} and found the right sign back from Igusa's relations (\cite{Igusa64bis}, Lemma 1 combined with the proof of Theorem 1). 
\end{rem}

 There is a natural definition for a tubular neighbourhood of $Y = \partial A_2(2)$: for a  finite place $v$, as in Theorem \ref{thmtubularRungeproduitCE},  we choose $V_v$ as the set of triples $P = \overline{(A,\lambda,\alpha_2)}$ where $A$ has potentially bad reduction modulo $v$. To complete it with archimedean places, we use the classical fundamental domain for the action of $\Sp_4(\Z)$ on $\Hcal_2$ denoted by $\Fcal_2$ (see \cite{Klingen}, section I.2 for details). Given some parameter $t \geq \sqrt{3}/2$, the neighbourhood $V(t)$ of $\partial A_2(2)_\C^S$ in $A_2(2)^S_\C$ is made up with the points $P$ whose lift $\tau$ in $\Fcal_2$ (for the usual quotient morphism $\Hcal_2 \rightarrow A_2(1)_\C$) satisfies $\Im (\tau_4) \geq t$, where $\tau_4$ is the lower-right coefficient of $\tau$. We choose $V(t)$ as the archimedean component of the tubular neighbourhood for every archimedean place. The reader knowledgeable with the construction of Satake compactification will have already seen such neighbourhoods of the boundary.

 Notice that for a point $P=\overline{(A,\lambda,\alpha_2)} \in A_2(2)(K)$, the abelian surface $A$ is only defined over a finite extension $L$ of $K$, but for prime ideals $\gP_1$ and $\gP_2$ of $\Ocal_L$ above the same prime ideal $\gP$ of $\Ocal_K$, the reductions of $A$ modulo $\gP_1$ and $\gP_2$ are of the same type because $P \in A_2(2) (K)$. This justifies what we mean by ``semistable reduction of $A$ modulo $\gP$'' below.

\begin{thm}
\label{thmproduitCEexplicite}
Let $K$ be a number field and $P=\overline{(A,\lambda,\alpha_2)} \in A_2(2)(K)$ where $A$ has potentially good reduction at every finite place.

Let $s_P$ be the number of prime ideals $\gP$ of $\Ocal_K$ such that the semistable reduction of $A$ modulo $\gP$ is a product of elliptic curves. We denote by $h_\Fcal$ the stable Faltings height of $A$.

$(a)$ If $K=\Q$ or an imaginary quadratic field and 
\[
	|s_P|<4
\]
then
\[
	h(\psi(P)) \leq 10.75, \quad h_\Fcal(A) \leq 1070.
\]

$(b)$ Let $t \geq \sqrt{3}/2$ be a real number. If for any embedding $\sigma : K \rightarrow \C$, the point $P_\sigma \in A_2(2)_\C$ does not belong to $V(t)$, and 
 \[
|s_P| + |M_K^{\infty}| < 10
\]
then
\[
                                                                                                   h(\psi(P)) \leq 4 \pi t + 6.14, \quad h_\Fcal(A) \leq 2 \pi t + 535 \log(2 \pi t + 9)           		
\]
\end{thm}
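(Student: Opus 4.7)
The plan is to rerun the proof of Theorem \ref{thmtubularRungeproduitCE} while tracking all constants explicitly. One applies the machinery of Theorem \ref{thmRungetubulaire} to the variety $A_2(2)^S$, the ten theta divisors $D_{2,a,b}$, and the embedding $\psi$ of \eqref{eqdefplongementpsi}. By Proposition \ref{propproprietesDnabcomplexes}(b) the divisors $D_{2,a,b}$ are pairwise disjoint outside $\partial A_2(2)$, so with $Y = \partial A_2(2)$ one has $m_Y = 1$ and the tubular Runge condition is $s_L < 10$. The auxiliary functions are the ratios $\phi_{i,j} = \Theta_{m_j/2}^4/\Theta_{m_i/2}^4$ for $i \neq j$ in $E$: as in the proof of Theorem \ref{thmtubularRungeproduitCE}, for each $P$ satisfying the hypotheses, a pigeonhole across the non-integrality places produces an index $i \in E$ such that $\Theta_{m_i/2}^4$ dominates $\|\psi(P)\|_w$ at every $w \in M_L$, and the product formula then yields an explicit height bound on $\psi(P)$ once the relevant $M_K$-constants are controlled. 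In part (a) the budget $1 + 1 + s_P \leq 5 < 10$ leaves the archimedean place itself to be counted as a non-integrality place, whereas in part (b) the archimedean tubular condition removes the archimedean places from this budget and forces $\Im(\tau_4) \leq t$.

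Making these constants effective requires three distinct local analyses, corresponding to Subsections \ref{subsecalgebraicthetafunctions}--\ref{subsecplacesabove2}. First, at finite places $w$ of $L$ not above $2$ where $A$ has good reduction, Mumford's algebraic theta functions allow one to interpret the $\Theta_{m/2}^4$ as sections of an ample line bundle over the local ring at $w$, so the ratios $\phi_{i,j}$ are $w$-adic units unless $m_i$ or $m_j$ corresponds to the theta divisor of the semistable EC reduction of $A$; the pigeonhole then absorbs precisely the $s_P$ remaining places. Second, at archimedean places one lifts $P$ to $\tau$ in the Siegel fundamental domain $\Fcal_2$ and estimates $|\Theta_{m/2}^4(\tau)|$ via its Fourier series $\Theta_{m/2}(\tau) = \sum_k e^{i\pi(k+a)\tau\,{}^t(k+a) + 2i\pi(k+a)\,{}^tb}$, using the standard lower bound $\Im(\tau) \geq (\sqrt{3}/2) I_2$ on $\Fcal_2$ and dominant-term analysis. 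In part (b) the bound $\Im(\tau_4) \leq t$ produces an archimedean contribution of shape $4\pi t + O(1)$; in part (a) this place is itself allowed as a non-integrality place and the sharper condition $s_P < 4$ supplies the extra margin. Third, at finite places above $2$ -- where the theta constants fail to be integral -- one passes to Igusa invariants of the underlying genus-$2$ curve, which do characterise good reduction at $2$, and constructs explicit polynomial identities expressing quotients of the ten $\Theta_{m/2}^4$ in terms of these invariants. This dictionary is verified via computer algebra (Sage), and it is in this step that the sign correction to the linear relation of \cite{vdG82} mentioned after \eqref{eqdefplongementpsi} intervenes.

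Assembling the three families of local estimates through the product formula applied to $\Theta_{m_i/2}^4$ for the $i$ furnished by pigeonhole yields the explicit bounds $h(\psi(P)) \leq 10.75$ in part (a) and $h(\psi(P)) \leq 4\pi t + 6.14$ in part (b). The translation to the stable Faltings height is then obtained by invoking a Pazuki-type comparison between a Weil height on $\Acal_2(2)^S$ coming from the embedding $\psi$ (equivalently, from modular forms of weight $4$ and level $2$) and $h_\Fcal(A)$; the logarithmic correction $535 \log(2\pi t + 9)$ in part (b) is the expected shape of such a comparison estimate, and the absolute constant $1070$ in part (a) follows by plugging the height bound into it. The main obstacle is unambiguously the third step, namely the explicit analysis above $2$: here there is no clean algebraic-theta formalism available, and one must patiently construct, and rigorously verify, polynomial identities between the ten theta fourth powers and the Igusa invariants, all while keeping every intermediate constant explicit so that the final numerical bounds $10.75$, $6.14$, $1070$, $535$ emerge from the product formula.
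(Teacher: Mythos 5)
Your overall strategy is the paper's: the same three local analyses (Mumford's algebraic theta functions away from $2$, Fourier-expansion estimates at infinity, Igusa invariants plus explicit symmetric-function identities above $2$), the same pigeonhole through the ten coordinates of $\psi$, the same use of the product formula, and the same final comparison with the Faltings height via Pazuki's Corollary 2.2 (with $h_\Fcal$ controlled through $h_\Theta(A,L)=h(\psi(P))/4$). The description of the step above $2$ is less specific than the paper's (which bounds the elementary symmetric functions $\Sigma_i$ of the $\Theta_{m/2}^8$ in terms of $h_4,h_6,h_{10},h_{12}$ and then reads off valuations from the Newton polygon of the degree-$10$ polynomial they define), but it is compatible with it.

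There is, however, one genuine slip in your accounting for part $(a)$. You write that the budget is ``$1+1+s_P\leq 5<10$'', counting the archimedean place as excluding a single divisor. Without the archimedean tubular condition this is false: near the boundary of the fundamental domain, six of the ten theta constants (those with $m\notin\{(0000),(0001),(0010),(0011)\}$) can be simultaneously and arbitrarily small, so the archimedean place must be charged \emph{six} exclusions, not one. This is exactly the content of Proposition \ref{proparchimedeanbound} $(a)$, and it is the reason the hypothesis of part $(a)$ is $s_P<4$ rather than something like $s_P\leq 8$: the correct count is $6+s_P\leq 9<10$. If you ran the pigeonhole with your count, the surviving index $i$ could be one of the six characteristics with no archimedean lower bound, and the product-formula step would give no height bound at all. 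Once you replace your count by ``exclude the six possibly-small characteristics at infinity, plus one characteristic at each of the at most three finite places of bad (product) reduction, plus at most one at each place above $2$ with product reduction (already counted in $s_P$)'', the surviving coordinate satisfies $|\Theta_{m_i/2}(\tau)|\geq 0.42\max$ at infinity, is maximal at good finite places away from $2$, and is bounded below by an explicit power of $|2|_\gP$ above $2$, and the numerical bound $10.75$ follows. The rest of your argument, including part $(b)$ where each archimedean place under the tubular condition does exclude only one divisor, is sound.
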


The Runge condition for $(b)$ is a straightforward application of our tubular Runge theorem. For $(a)$, we did not assume anything on the point $P$ at the (unique) archimedean place, which eliminates six divisors when applying Runge's method here, hence the different Runge condition here (see Remark \ref{remRungetubulaire} $(b)$).

The principle of proof is very simple: we apply Runge's method to bound the height of $\psi(P)$ when $P$ satisfies the conditions of Theorem \ref{thmtubularRungeproduitCE}, and using the link between this height and Faltings height given in (\cite{Pazuki12b}, Corollary 1.3), we know we will obtain a bound of the shape 
\[
	h_\Fcal(P) \leq f(t)
\]
where $f$ is an explicit function of $t$, for every point $P$ satisfying the conditions of Theorem \ref{thmtubularRungeproduitCE}.

At the places of good reduction not dividing 2, the contribution to the height is easy to compute thanks to the theory of algebraic theta functions devised in \cite{Mumford66} and \cite{Mumford67}. The theory will be sketched in part \ref{subsecalgebraicthetafunctions}, resulting in Proposition \ref{propalgthetafoncetreduchorsde2}. 

For the archimedean places, preexisting estimates due to Streng for Fourier expansions on each of the ten theta functions allow to make explicit how only one of them can be too small compared to the others, when we are out of $V(t)$. This is the topic of part \ref{subsecarchimedeanplaces}.

For the places above 2, the theory of algebraic theta functions cannot be applied. To bypass the problem, we use Igusa invariants (which behave in a well-known fashion for reduction in any characteristic) and prove that the theta functions are algebraic and ``almost integral'' on the ring of these Igusa invariants, with explicit coefficients. Combining these two facts in part \ref{subsecplacesabove2}, we will obtain Proposition \ref{propbornesfoncthetaaudessus2}, a less-sharp avatar of Proposition \ref{propalgthetafoncetreduchorsde2}, but explicit nonetheless. 

Finally, we put together these estimates in part \ref{subsecfinalresultRungeCEexplicite} and obtain the stated bounds on $h \circ \psi$ and the Faltings height.

\subsection{Algebraic theta functions and the places of potentially good reduction outside of 2}
\label{subsecalgebraicthetafunctions}

The goal of this part  is the following result.

\begin{prop}
\label{propalgthetafoncetreduchorsde2}
Let $K$ be a number field and $\gP$ a maximal ideal of $\Ocal_K$, of residue field $k(\gP)$ with characteristic different from 2.
	Let $P = \overline{(A, \lambda,\alpha_2)} \in A_2(2)(K)$. Then, $\psi(P) \in \P^9(K)$ and : 
	
	$(a)$ If the semistable reduction of $A$ modulo $\gP$ is a product of elliptic curves, the reduction of $\psi(P)$ modulo $\gP$ has exactly one zero coordinate, in other words every coordinate of $\psi(P)$ has the same $\gP$-adic norm except one which is strictly smaller.
	
	$(b)$ If the semistable reduction of $A$ modulo $\gP$ is a jacobian of hyperelliptic curve, the reduction of $\psi(P)$ modulo $\gP$ has no zero coordinate, in other words every coordinate of $\psi(P)$ has the same $\gP$-adic norm.
\end{prop}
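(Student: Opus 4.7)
The plan is to reduce the statement to the count of $2$-torsion points on the theta divisor of the semistable reduction of $A$, by invoking Mumford's theory of algebraic theta functions to control the $\Theta_{m/2}^4$ under reduction modulo $\gP$. First I would observe that $\psi(P) \in \P^9(K)$: each $\Theta_{m/2}^4$ is a Siegel modular form of weight $4$ and level $2$ defined over $\Q$, so the ratios $\Theta_{m/2}^4/\Theta_{m'/2}^4$ are rational functions on $A_2(2)^S$ defined over $\Q$ and take values in $K$ at any $K$-rational point. To pass to reductions I would enlarge $K$ to a finite Galois extension $L$ over which $(A,\lambda,\alpha_2)$ is defined and such that $A$ acquires good reduction at every place $\gQ \mid \gP$ of $L$; let $\Acal/\Ocal_{L,\gQ}$ denote its N\'eron model, a principally polarized abelian scheme. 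Because $\carac k(\gQ) \neq 2$, the group scheme $\Acal[2]$ is \'etale, so the $2$-torsion sections extend integrally and reduce bijectively onto the $2$-torsion of the special fiber, transporting $\alpha_2$ into an analogous level-$2$ structure on $\Acal_{k(\gQ)}$. Applying Definition-Proposition \ref{defipropthetadiviseurcanonique} both generically and on the special fiber then yields a canonically chosen ample symmetric line bundle $\Lcal$ on $\Acal$ compatible on both sides.

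Next I would invoke Mumford's theory of algebraic theta functions (\cite{Mumford66}, \cite{Mumford67}): the $\Ocal_{L,\gQ}$-module $H^0(\Acal,\Lcal)$ is free of rank one, and after choosing a generator $\vartheta$ together with a common trivialization of $\Lcal$ along the $2$-torsion sections, one obtains for each $m \in (\Z/2\Z)^4$ an element $\vartheta_m \in \Ocal_{L,\gQ}$. These values are defined up to a single common unit, and by the analytic--algebraic comparison their fourth powers provide projective coordinates for $\psi(P)$ (matching the classical $\Theta_{m/2}(\tau)$ on the archimedean side up to the common trivialization). By the functoriality of Mumford's construction, $\vartheta_m \bmod \gQ$ is the corresponding theta value on the special fiber, which vanishes if and only if the reduction of the $2$-torsion point with coordinates $m$ lies on the theta divisor of the semistable reduction of $A$.

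The two cases then follow from inspection of Propositions \ref{propdivthetaproduitCE} and \ref{propnombrepointsdivthetajacobienne}: a product of two elliptic curves has exactly seven $2$-torsion points on its theta divisor, six having odd characteristic (i.e.\ with $m \notin E$) and one having even characteristic ($m \in E$), which yields exactly one vanishing $\vartheta_m$ for $m \in E$; a jacobian of a smooth genus-$2$ curve has exactly six such points, all of odd characteristic, which yields no vanishing $\vartheta_m$ for $m \in E$. Since $\psi(P) \in \P^9(K)$, the reductions modulo $\gP$ and modulo $\gQ$ have the same number of zero coordinates via the natural inclusion $k(\gP) \hookrightarrow k(\gQ)$. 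The equal-$\gP$-adic-norm statement is then automatic: once one coordinate is normalized to be a $\gP$-adic unit, the coordinates with nonzero reduction are units and the others have strictly smaller norm. The main obstacle is the Mumford machinery itself — the existence of the integral $\Lcal$ compatible with Igusa's correspondence on both fibers, together with a simultaneous trivialization of $\Lcal$ along the $2$-torsion sections making all $\vartheta_m$ integral up to the same unit — but this is precisely what the theory in residue characteristic prime to $2$ delivers.
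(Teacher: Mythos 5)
Your proposal is correct and follows essentially the same route as the paper: invoke Mumford's algebraic theory of theta functions to obtain integral theta coordinates whose reduction modulo $\gP$ detects which $2$-torsion points lie on the theta divisor of the special fiber, then conclude by the counts of Propositions \ref{propdivthetaproduitCE} and \ref{propnombrepointsdivthetajacobienne}. The only difference is one of bookkeeping: the paper routes through symmetric theta structures on $L^{\otimes 8}$ (since Mumford's arithmetic theory requires the type to be divisible by $8$) and reads off the even $2$-torsion characteristics among the level-$8$ Mumford coordinates, whereas you evaluate the rank-one module $H^0(\Acal,\Lcal)$ directly at the $2$-torsion sections --- which suffices for the valuation statement provided the ``common trivialization'' is the theta-structure-compatible one, since an arbitrary trivialization would not reproduce the classical $\Theta_{m/2}(\tau)$ projectively because of the automorphy factors relating $\Theta_{m/2}(\tau)$ to $\Theta_\tau(a\tau+b)$.
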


To link $\psi(P)$ with the intrinsic behaviour of $A$, we use the theory of algebraic theta functions, devised in \cite{Mumford66} and \cite{Mumford67} (see also \cite{DavidPhilippon} and \cite{Pazuki12b}). As it is not very useful nor enlightening to go into detail or repeat known results, we only mention them briefly here.
In the following, $A$ is an abelian variety of dimension $g$ over a field $k$ and $L$ an ample symmetric line bundle on $A$ inducing a principal polarisation $\lambda$. We also fix $n \geq 2$ even, assuming that all the points of $2n$-torsion of $A$ are defined over $k$ and $\carac(k)$ does not divide $n$ (in particular, we always assume $\carac(k) \neq 2$). Let us denote formally the Heisenberg group $\Gcal(\underline{n})$ as the set
\[
	\Gcal(\underline{n}) := k^* \times (\Z/n\Z)^g \times (\Z/n\Z)^g
\]
equipped with the group law 
\[
	(\alpha,a,b) \cdot (\alpha',a',b') := (\alpha \alpha' e^{\frac{2 i \pi}{n} a{}^t b'},a+a',b+b')
\]
(contrary to the convention of \cite{Mumford66}, p.294, we identified the dual of $(\Z/n\Z)^g$ with itself). Recall that $A[n]$ is exactly the group of elements of $A(\overline{k})$ such that $T_x^* (L^{\otimes n}) \cong L ^{\otimes n}$ : indeed, it is the kernel of the morphism $\lambda_{L^{\otimes n}} = n \lambda$ from $A$ to $\widehat{A}$ (see proof of Proposition \ref{propambiguitedivthetaAL}).

\begin{proof}

	Given the datum of a \textit{theta structure} on $L^{\otimes n}$, i.e. an isomorphism $\beta: \Gcal(L^{\otimes n}) \cong \Gcal(\underline{n})$ which is the identity on $k^*$ (see \cite{Mumford66}, p. 289 for the definition of $\Gcal(L^{\otimes n})$), one has a natural action of $\Gcal(\underline{n})$ on $\Gamma(A,L^{\otimes n})$ (consequence of Proposition 3 and Theorem 2 of \cite{Mumford66}), hence for $n \geq 4$ the following  projective embedding of $A$ : 
	\begin{equation}
	\label{eqplongementA}
		\fonction{\psi_\beta}{A}{\P^{n^{2g} - 1}_k}{x}{\left( ((1,a,b)\cdot ( s_0^{\otimes n})) (x) \right)_{a,b \in (\Z/n\Z)^g}},
	\end{equation}
where $s_0$ is a nonzero section of $\Gamma(A,L)$, hence unique up to multiplicative scalar (therefore $\psi_\beta$ only depends on $\beta$). This embedding is not exactly the same as the one defined in (\cite{Mumford66}, p. 298) (it has more coordinates), but the principle does not change at all. One calls \textit{Mumford coordinates of $(A,L)$ associated to $\beta$} the projective point $\psi_\beta(0) \in \P^{n^{2g-1}}(k)$.

	Now, one has the following commutative diagram whose rows are canonical exact sequences (\cite{Mumford66}, Corollary of Theorem 1)
	\[
		\xymatrix{
		0 \ar[r] & k^* \ar[d]^{=} \ar[r] & \Gcal(L^{\otimes n}) \ar[r] \ar[d]^{\beta}& A[n] \ar[d]^{\alpha_n} \ar[r] & 0 \\
		0 \ar[r] & k^* \ar[r] & \Gcal(\underline{n}) \ar[r] & (\Z/n\Z)^{2g} \ar[r] & 0,
		}
	\]
where $\alpha_n$ is a symplectic level $n$ structure on $A[n]$ (Definition \ref{defibaseabvar}), called \textit{the symplectic level $n$ structure induced by $\beta$}. Moreover, for every $x \in A(k)$, the coordinates of $\psi_\beta (x)$ are (up to constant values for each coordinate, only depending on $\beta$) the $\vartheta_{A,L} ([n] x +\alpha_{n}^{-1} (a,b))$ (see Definition \ref{defithetadivisorabvar}). In particular, for any $a,b \in (\Z/n \Z)^g$, 
\begin{equation}
\label{eqliencoordonneesMumfordetdivtheta}
\psi_\beta(0)_{a,b} = 0 \Leftrightarrow \alpha_n^{-1} (a,b) \in \Theta_{A,L}.
\end{equation}
Furthermore, for two theta structures $\beta,\beta'$ on $[n]^* L$ inducing $\alpha_n$, one sees that $\beta' \circ \beta^{-1}$ is of the shape $(\alpha,a,b) \mapsto (\alpha \cdot f(a,b),a,b)$, where $f$ has values in $n$-th roots of unity, hence $\psi_\beta$ and $\psi_{\beta'}$ only differ multiplicatively by $n$-th roots of unity. 

      Conversely, given the datum of a symplectic structure $\alpha_{2n}$ on $A[2n]$, there exists an unique \textit{symmetric theta structure} on $[n]^* L$ which is \textit{compatible} with some symmetric theta structure on $[2n]^* L$ inducing $\alpha_{2n}$ (\cite{Mumford66}, p.317 and Remark 3 p.319). We call it the \textit{theta structure on $[n]^* L$ induced by $\alpha_{2n}$}. Thus, we just proved that the datum of a symmetric theta structure on $[n]^*L$ is intermediary between a level $2n$ symplectic structure and a level $n$ symplectic structure (the exact congruence group is easily identified as $\Gamma_g(n,2n)$ with the notations of \cite{Igusa66}).

 Now, for a triple $(A,L,\alpha_{2n})$ (notations of subsection \ref{subsecabvarSiegelmodvar}), when $A$ is a complex abelian variety, there exists $\tau \in \Hcal_g$ such that this triple is isomorphic to $(A_\tau,L_\tau,\alpha_{\tau,2n})$ (Definition-Proposition \ref{defipropuniformcomplexabvar}). By definition of $L_\tau$ as a quotient \eqref{eqdeffibresurAtau}, the sections of $L_\tau ^{\otimes n}$ canonically identify to holomorphic functions $\vartheta$ on $\C^g$ such that 
      \begin{equation}
      \label{eqsectionsLtaupuissancen}
      \forall p,q \in \Z^g, \forall z \in \C^g, \quad \vartheta(z + p \tau + q) = e^{ - i \pi n \tau ^t n - 2 i \pi n ^t z} \vartheta(z),
      \end{equation}
and through this identification one sees (after some tedious computations) that the symmetric theta structure $\beta_\tau$ on $L_\tau^{\otimes n}$ induced by $\alpha_{\tau,2n}$ acts by 
\[
	((\alpha,a,b) \cdot \vartheta) (z) = \alpha \exp\left( \frac{i \pi}{n} \widetilde{a} \tau \widetilde{a} + \frac{2 i \pi}{n} \widetilde{a}{}^t (z+\widetilde{b}) \right)  \vartheta \left( z+\frac{\widetilde{a}}{n} \tau + \frac{\widetilde{b}}{n} \right),
\]
where $\widetilde{a},\widetilde{b}$ are lifts of $a,b$ in $\Z^g$ (the result does not depend on this choice by \eqref{eqsectionsLtaupuissancen}). Therefore, by $\psi_\beta$ and the theta functions with characteristic (formula \eqref{eqthetaabenfonctiontheta}), the Mumford coordinates of $(A,L,\alpha_{2n})$ (with the induced theta structure $\beta$ on $L^{\otimes n})$ are \textit{exactly} the projective coordinates 
\[
	\left( \Theta_{\widetilde{a}/n,\widetilde{b}/n (\tau)}^n(\tau) \right)_{a,b \in \frac{1}{n} \Z^{2g} / \Z^{2g}} \in \P^{n^{2g-1}} (\C),
\]
where the choices of lifts $\widetilde{a}$ and $\widetilde{b}$ for $a$ and $b$ still do not matter.

In particular, for every $\tau \in \Hcal_2$, the point $\psi(\tau)$ can be intrinsically given as the squares of Mumford coordinates for $\beta_\tau$, where the six odd characteristics (whose coordinates vanish everywhere) are taken out. The result only depends on the isomorphism class of $(A_\tau,L_\tau,\alpha_{\tau,2})$, as expected.

Finally, as demonstrated in the paragraph 6 of \cite{Mumford67} (especially the Theorem p. 83), the theory of theta structures (and the associated Mumford coordinates) can be extended to abelian schemes (Definition \ref{defabelianscheme}) (still outside characteristics dividing $2n$), and the Mumford coordinates in this context lead to an embedding of the associated moduli space in a projective space as long as the \textit{type} of the sheaf is a multiple of 8 (which for us amounts to $8|n$). Here, fixing a principally polarised abelian variety $A$ over a number field $K$ and $\gP$ a prime ideal of $\Ocal_K$ not above 2, this theory means thats given a symmetric theta structure on $(A,L)$ for $L^{\otimes n}$ where $8|n$, if $A$ has good reduction modulo $\gP$, this theta structure has a natural reduction to a theta structure on the reduction $(A_{\gP}, L_{\gP})$ for $L_\gP^{\otimes n}$, and this reduction is compatible with the reduction of Mumford coordinates modulo $\gP$. To link this with the reduction of coordinates of $\psi$, one just has to extend the number field $K$ of definition of $A$ so that all 8-torsion points of $A$ are defined over $K$ (in particular, the reduction of $A$ modulo $\gP$ is semistable), and consider a symmetric theta structure on $L^{\otimes 8}$. The associated Mumford coordinates then reduce modulo $\gP$, but their vanishing is linked to the belonging of $8$-th torsion points to $\Theta_{A_\gP,L_\gP}$ by \eqref{eqliencoordonneesMumfordetdivtheta}. The number of vanishing coordinates is then entirely determined in Propositions \ref{propdivthetaproduitCE} and \ref{propnombrepointsdivthetajacobienne}, which proves Proposition \ref{propalgthetafoncetreduchorsde2} (not forgetting the six ever-implicit odd characteristics).
\end{proof}

\subsection{Evaluating the theta functions at archimedean places}
\label{subsecarchimedeanplaces}

We denote by $\Hcal_2$ the Siegel half-space of degree 2, and by $\Fcal_2$ the usual fundamental domain of this half-space for the action of $\Sp_4(\Z)$ (see \cite{Klingen}, section I.2 for details). For $\tau \in \Hcal_2$, we denote by $y_4$ the imaginary part of the lower-right coefficient of $\tau$.

\begin{prop}
\label{proparchimedeanbound}
	For every $\tau \in \Hcal_2$ and a fixed real parameter $t \geq \sqrt{3}/2$, one has : 
	
	$(a)$ Amongst the ten even characteristics $m$ of $E$, at most six of them can satisfy 
	\[
	|\Theta_{m/2} (\tau)| < 0.42 \max_{m' \in E} |\Theta_{m'/2} (\tau)|.
	\]
	
	$(b)$ If the representative of the orbit of $\tau$ in the fundamental domain $\Fcal_2$ satisfies $y_4 \leq t$, at most one of the ten even characteristics $m$ of $E$ can satisfy
	\[
		|\Theta_{m/2} (\tau)| < 1.22 e^{- \pi t} \max_{m' \in E} |\Theta_{m'/2} (\tau)|.
	\]
\end{prop}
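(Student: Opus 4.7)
The plan is to use the modular transformation formula \eqref{eqmodularitethetaconstantes} to reduce to $\tau \in \Fcal_2$. For any $\gamma \in \Sp_4(\Z)$ the vector $(\vartheta_{m/2}(\gamma \cdot \tau))_{m \in E}$ is, up to a common automorphy factor $\sqrt{\det j_\gamma(\tau)}$ and coordinate-wise multiplication by $8$th roots of unity, a permutation of $(\vartheta_{m/2}(\tau))_{m \in E}$ (as evenness of characteristics is preserved by $\Sp_4(\Z/2\Z)$). The inequalities in (a) and (b) involve only the moduli $|\vartheta_{m/2}|$ relative to $\max_{m'}|\vartheta_{m'/2}|$, hence are invariant under this action; the hypothesis of (b) is invariant as well by its very formulation. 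So we can assume $\tau$ lies in $\Fcal_2$, and use the Minkowski reduction inequalities $y_1 \geq \sqrt{3}/2$, $y_1 \leq y_4$, $2|y_2|\leq y_1$.

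\textbf{Fourier estimates.} For each $m=(a,b) \in E$, split the defining series \eqref{eqdefseriethetaab} at $z=0$ into a principal part $S_m^{\mathrm{p}}(\tau)$ consisting of those $n \in \Z^2$ minimizing the positive-definite quadratic form $q_m(n) := (n+a/2) \Im(\tau) {}^t(n+a/2)$ and a tail $S_m^{\mathrm{t}}(\tau)$. The Minkowski inequalities guarantee that for $n$ outside this minimizing set, $q_m(n)$ exceeds its minimum by at least $\sqrt{3}/2$; then $|S_m^{\mathrm{t}}(\tau)|$ is bounded explicitly by a convergent geometric series in $e^{-\pi \sqrt{3}/2}$ (this is precisely the type of estimate worked out by Streng in his thesis). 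This yields explicit two-sided bounds of the form $|S_m^{\mathrm{p}}(\tau)| - \varepsilon_m \leq |\vartheta_{m/2}(\tau)| \leq |S_m^{\mathrm{p}}(\tau)| + \varepsilon_m$ throughout $\Fcal_2$.

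\textbf{Proof of (a).} The four characteristics with $a = (0,0)$ have $S_m^{\mathrm{p}}(\tau) = 1$ (coming from $n=0$), so $|\vartheta_{m/2}(\tau)| \geq 1 - \varepsilon$ for an explicit $\varepsilon$ depending only on the bound $y_1 \geq \sqrt{3}/2$. Conversely, for any $m' \in E$ one has $|\vartheta_{m'/2}(\tau)| \leq 1 + \varepsilon'$ by summing the leading terms and the tail. Choosing $\varepsilon, \varepsilon'$ explicitly shows $(1-\varepsilon)/(1+\varepsilon') > 0.42$ uniformly, so these four characteristics never violate the inequality, which leaves at most the six with $a \neq (0,0)$.

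\textbf{Proof of (b) and the main difficulty.} When $y_4 \leq t$, the principal term of each $\vartheta_{m/2}$ has explicit size $|S_m^{\mathrm{p}}(\tau)| \asymp c_{a,b}(\tau) \, e^{-\pi q_m^{\min}(\tau)/4}$ where $c_{a,b}$ is a cosine phase factor in $b$ and $q_m^{\min} \leq y_4 \leq t$. The only way one of these moduli can fall below the target threshold $1.22\, e^{-\pi t} \max_{m'}|\vartheta_{m'/2}|$ is through phase cancellation driving $\vartheta_{m/2}$ close to a zero. By Proposition \ref{propproprietesDnabcomplexes}(b), the ten vanishing loci of the $\vartheta_{m/2}$ are set-theoretically pairwise disjoint outside $\partial A_2(2)$, hence at most one of them meets any given $\tau$ in $\{y_4 \leq t\}$; a quantitative form of this disjointness, extracted from Streng's estimates applied to \emph{two} simultaneous theta constants, gives the explicit constant $1.22$. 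The main obstacle is precisely this quantitative transversality: one must verify, by bounding the Fourier tails against the leading terms of each pair, that no two distinct $|\vartheta_{m/2}(\tau)|$ can simultaneously be as small as $1.22\, e^{-\pi t}$ times the maximum, uniformly in $\tau \in \Fcal_2$ with $y_4 \leq t$.
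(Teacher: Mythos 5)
Your reduction to $\Fcal_2$ and your proof of $(a)$ are essentially the paper's argument: the paper quotes Streng's Proposition 7.7, which gives two-sided bounds on each $|\Theta_{m/2}(\tau)|$ after factoring out an explicit leading term, and concludes $(a)$ from $0.595/1.405>0.42$ for the four characteristics $\{(0000),(0001),(0010),(0011)\}$ whose leading term is $1$. Up to not writing down the constants, that part of your proposal is sound and follows the same route.

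Part $(b)$, however, has a genuine gap, and it is precisely the step you flag as "the main obstacle". The paper never invokes the set-theoretic disjointness of the vanishing loci (Proposition \ref{propproprietesDnabcomplexes}$(b)$), nor any pairwise "quantitative transversality". Instead, Streng's estimates give \emph{individual} lower bounds for nine of the ten constants on the region $y_4\le t$: the leading terms are $1$ for the first four, $2e^{i\pi\tau_1/2}$ and $2e^{i\pi\tau_4/2}$ for the next four, and $(\pm 1+e^{2i\pi\tau_2})e^{i\pi(\tau_1+\tau_4-2\tau_2)/2}$ for $(1100)$ and $(1111)$; only the last coefficient $-1+e^{2i\pi\tau_2}$ (the case $(1111)$) can degenerate, so the single exceptional characteristic is identified explicitly and each of the other nine is bounded below by $1.05\,e^{-\pi t}$ or better. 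Your route cannot be completed as stated: the disjointness of the zero divisors is a purely set-theoretic fact, and while compactness of $\{\tau\in\Fcal_2 : y_4\le t\}$ would yield \emph{some} positive constant $c(\tau,t)$ for each fixed $t$, it gives neither an explicit value nor the required scaling $1.22\,e^{-\pi t}$; extracting that would force you back to exactly the term-by-term Fourier bounds you defer. In short, the "phase cancellation / two simultaneous theta constants" analysis is both unnecessary (the paper avoids it) and not actually carried out in your write-up, so $(b)$ remains unproved. A secondary, fixable imprecision: in $(a)$ you assert $|\vartheta_{m'/2}(\tau)|\le 1+\varepsilon'$ for \emph{all} $m'$ as if all leading terms were $1$; the correct statement is that the global maximum is at most $1.405$ because the other six leading terms are even smaller in modulus on $\Fcal_2$.
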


\begin{proof}
	First, we can assume that $\tau \in \Fcal_2$ as the inequalities $(a)$ and $(b)$ are invariant by the action of $\Sp_4(\Z)$, given the complete transformation formula of these theta functions (\cite{MumfordTata}, section II.5). Now, using the Fourier expansions of the ten theta constants (mentioned in the proof of Definition-Proposition \ref{defipropdivthetaA2ncomplexes}) and isolating their respective dominant terms (such as in \cite{Klingen}, proof of Proposition IV.2), we obtain explicit estimates. More precisely, Proposition 7.7 of \cite{Strengthesis} states that, for every $\tau  = \begin{pmatrix} \tau_1 & \tau_2 \\ \tau_2 & \tau_4 \end{pmatrix} \in \Bcal_2$ (which is a domain containing $\Fcal_2$), one has 
	\begin{eqnarray*}
		\left| \Theta_{m/2}(\tau) - 1 \right| & < & 0.405, \quad {\scriptstyle m \in \{(0000)(0001),(0010),(0011) \}}. \\
		\left| \frac{ \Theta_{m/2}(\tau)}{2 e^{ i \pi \tau_1/2}} - 1 \right| & < & 0.348, \quad {\scriptstyle m \in \{(0100),(0110) \}}. \\
		\left| \frac{ \Theta_{m/2}(\tau)}{2 e^{ i \pi \tau_4/2}} - 1 \right| & < & 0.348, \quad {\scriptstyle m \in \{(1000),(1001) \}}. \\
		\left| \frac{ \Theta_{m/2}(\tau)}{(\varepsilon_m + e^{2 i \pi \tau_2})e^{ i \pi (\tau_1 + \tau_4 - 2 \tau_3)/2}} - 1 \right| & < & 0.438,\quad {\scriptstyle m \in \{(1100),(1111) \}},
	\end{eqnarray*}
with $\varepsilon_m = 1$ if $m = (1100)$ and $-1$ if $m=(1111)$.

Under the assumption that $y_4 \leq t$ (which induces the same bound for $\Im \tau_1$ and $2 \Im \tau_2$), we obtain 
\[
\begin{array}{rcccl}
	0.595 & < & \left|  \Theta_{m/2}(\tau) \right| & < & 1.405, \quad {\scriptstyle m \in \{(0000)(0001),(0010),(0011) \}}. \\
	1.304 e^{ - \pi t/2} & < & \left|  \Theta_{m/2}(\tau) \right| & < & 0.692, \quad {\scriptstyle m \in \{(0100),(0110),(1000),(1001)\}}. \\
	1.05 e^{ - \pi t} & < & \left| \Theta_{m/2}(\tau) \right| & < &  0.855, \quad {\scriptstyle m = (1100)}. \\
	& & \left|  \Theta_{m/2}(\tau) \right| & < & 0.855, \quad {\scriptstyle m = (1111)}
\end{array}
\]
Thus, we get $(a)$ with $0.595/1.405 > 0.42$, and $(b)$ with $1.05 e^{- \pi t}/ 0.855> 1.22 e^{- \pi t}$.
\end{proof}

\subsection{Computations with Igusa invariants for the case places above 2}
\label{subsecplacesabove2}

In this case, as emphasized before, it is not possible to use Proposition \ref{propalgthetafoncetreduchorsde2}, as the algebraic theory of theta functions does not work.

We have substituted it in the following way. 
\begin{defi}[Auxiliary polynomials]
\hspace*{\fill}

	For every $i \in \{1, \cdots, 10\}$, let $\Sigma_i$ be the $i$-th symmetric polynomial in the ten modular forms $\Theta_{m/2}^8$, $m \in E$ (notation \eqref{eqevenchartheta}).
	This is a modular form of level $4i$ for the whole modular group $\Sp_4(\Z)$.
\end{defi}

Indeed, each $\Theta_{m/2}^8$ is a modular form for the congruence subgroup $\Gamma_2(2)$ of weight 4, and they are permuted by the modular action of  $\Gamma_2(1)$ (\cite{MumfordTata}, section II.5). The important point is that the $\Sigma_i$ are then polynomials in the four Igusa modular forms $\psi_4,\psi_6,\chi_{10}$ and $\chi_{12}$ (\cite{Igusa67bis}, p.848 and 849). We can now explain the principle of this paragraph : these four modular forms are linked explicitly with the Igusa invariants (for a given jacobian of an hyperelliptic curve $C$ over a number field $K$), and the semi-stable reduction of the jacobian at some place $v|2$ is determined by the integrality (or not) of some quotients of these invariants, hence rational fractions of the modular forms. Now, with the explicit expressions of the $\Sigma_i$ in terms of $\psi_4,\psi_6,\chi_{10}$ and $\chi_{12}$, we can bound these $\Sigma_i$ by one of the Igusa invariants, and as every $\Theta_{m/2}^8$ is a root of the polynomial
\[
	P(X) = X^{10} - \Sigma_1 X^9 + \Sigma_2 X^8 - \Sigma_3 X^7 + \Sigma_4 X^6 - \Sigma_5 X^5 + \Sigma_6 X^4 - \Sigma_7 X^4 + \Sigma_8 X^2 - \Sigma_9 X + \Sigma_{10},
\]
we can infer an explicit bound above on the $\Theta_{m/2}^8/\lambda$, with a well-chosen normalising factor $\lambda$ such that these quotients belong to $K$. Actually, we will even give an approximative shape of the Newton polygon of the polynomial $\lambda^{10} P(X/\lambda)$, implying that its slopes (except maybe the first one) are bounded above and below, thus giving us a minoration of each of the $|\Theta_{m/2}|_v/\max_{m' \in E} |\Theta_{m'/2}|_v$, except maybe for one $m$. The explicit result is the following.

\begin{prop}
\label{propbornesfoncthetaaudessus2}
	Let $K$ be a number field, $(A,L)$ a principally polarised jacobian of dimension 2 over $K$ and $\tau \in \Hcal_2$ such that $(A_\tau,L_\tau) \cong (A,L)$. 
	
	Let $\gP$ be a prime ideal of $K$ above $2$ such that $A$ has potentially good reduction at $\gP$, and the reduced (principally polarised abelian surface) is denoted by $(A_\gP,L_\gP)$.  By abuse of notation, we forget the normalising factor ensuring that the coordinates $\Theta_{m/2} (\tau)^8$ belong to $K$.
	
	$(a)$ If $(A_\gP,L_\gP)$ is the jacobian of a smooth hyperlliptic curve, all the $m \in E$ satisfy 
	\[
		 \frac{\left| \Theta_{m/2} (\tau) ^8\right|_\gP}{\max_{m' \in E} \left| \Theta_{m'/2} (\tau)^8 \right|_\gP } \geq |2|_\gP^{12}. 
	\]
	
	$(b)$ If $(A_\gP,L_\gP)$ is a product of elliptic curves, all the $m \in E$ except at most one satisfy
	\[
		 \frac{\left| \Theta_{m/2} (\tau)^8 \right|_\gP}{\max_{m' \in E} \left| \Theta_{m'/2} (\tau)^8\right|_\gP } \geq |2|_\gP^{21}. 
	\]

\end{prop}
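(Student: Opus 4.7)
The plan is to follow the strategy announced in the paragraph just above the statement. The ten values $y_m := \Theta_{m/2}(\tau)^8$, $m \in E$, are by definition the roots of the monic degree-$10$ polynomial
\[
P(X) = \sum_{i=0}^{10} (-1)^i \Sigma_i \, X^{10-i},
\]
whose coefficients are the auxiliary symmetric modular forms $\Sigma_i$ of weight $4i$ for $\Sp_4(\Z)$. By Igusa's structure theorem for the graded ring of Siegel modular forms of degree $2$ and level $1$, each $\Sigma_i$ is a polynomial with rational coefficients in the four generators $\psi_4,\psi_6,\chi_{10},\chi_{12}$. The first step is therefore a finite computation, best carried out on a computer algebra system, producing these ten polynomial identities together with explicit control on the $2$-adic denominators appearing in them.

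Next I would translate the geometric hypotheses into $\gP$-adic information on $\psi_4,\psi_6,\chi_{10},\chi_{12}$. The Igusa absolute invariants are quotients of these four forms, and classically their integrality at $\gP$ characterises potential good reduction of $(A,\lambda)$; within this setting, the two subcases are separated by whether $\chi_{10}$ is a $\gP$-unit (case (a), smooth hyperelliptic reduction) or reduces to zero (case (b), product of elliptic curves), since $\chi_{10}$ vanishes exactly on the reducible locus. Choosing a normalising factor $\lambda$ as announced in the paper, one replaces $y_m$ by $\widetilde y_m := y_m/\lambda \in K$ and $\Sigma_i$ by correspondingly normalised forms $\widetilde\Sigma_i$, and then plugs the $\gP$-valuation inequalities for $\psi_4,\psi_6,\chi_{10},\chi_{12}$ into the explicit polynomial expressions to obtain effective lower bounds on $v_\gP(\widetilde\Sigma_i)$ in each of the two cases.

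The last step is a Newton polygon analysis of $\widetilde P(X)$ at $\gP$: the multiset of valuations $v_\gP(\widetilde y_m)$ equals the multiset of slopes of the lower convex hull of the points $(i,v_\gP(\widetilde\Sigma_i))$, and the proposition becomes a bound on the spread of these slopes. In case (a), the inequalities obtained above should force all these vertices to lie close to a common line, so that the polygon has essentially one slope and the ten roots are confined to a band of height $12\,v_\gP(2)$. In case (b), the vanishing of $\chi_{10}$ modulo $\gP$ pushes one vertex anomalously down, producing a single exceptional slope at one end of the polygon while the other nine remain within a band of height $21\,v_\gP(2)$, which is exactly the claimed bound for all but at most one $m$.

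The main obstacle is clearly the explicit expression of $\Sigma_1,\ldots,\Sigma_{10}$ as polynomials in $\psi_4,\psi_6,\chi_{10},\chi_{12}$: these have large rational coefficients, and the numerical constants $12$ and $21$ in the statement are precisely the outcome of balancing the $2$-adic denominators of these expressions against the Igusa-invariant integrality inequalities encoding each reduction type. This bookkeeping is not well suited to pen-and-paper manipulation and must be executed and justified symbolically, in the spirit of subsection \ref{subsecplacesabove2}; sharpening either constant would require a finer polygon argument that I do not expect to be necessary for the final Runge bound.
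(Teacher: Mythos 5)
Your plan coincides with the paper's proof in outline: compute the $\Sigma_i$ as explicit polynomials in the (level-one) Igusa generators by computer algebra, convert the reduction type at $\gP$ into valuation bounds on those generators via integrality of Igusa invariants, and read off the spread of the slopes of the Newton polygon of $P$. Case $(a)$ is correctly described: there the paper normalises by $h_{10}^{2/5}$, uses Igusa's criterion that $J_{2i}^5/J_{10}^i$ are $\gP$-integral, and both endpoints of the polygon are pinned ($\Sigma_0=1$ and $\Sigma_{10}=2^{-4}h_{10}^4$ exactly), so bounding the intermediate vertices from below immediately confines all ten slopes to a band of height $12\,v_\gP(2)$.

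In case $(b)$, however, your sketch has a genuine gap. You assert that the vanishing of $\chi_{10}$ produces ``a single exceptional slope \ldots while the other nine remain within a band,'' but nothing in your argument rules out \emph{two or more} exceptionally small coordinates. The valuation lower bounds on the $\Sigma_i$ (here normalised by powers of $P_{48}^{1/12}$, coming from Liu's good-reduction criterion rather than from $\chi_{10}$ being a non-unit) control the largest slope and bound each vertex from below, but they give no \emph{upper} bound on $v_\gP(\Sigma_{10})$ or $v_\gP(\Sigma_9)$: if both were anomalously divisible, the two last slopes would both be unbounded and the conclusion would fail. The paper closes this with an extra argument you are missing: since the normalising form $P_{48}$ is cuspidal and lies in the ideal generated by $h_{10}$ and $h_{12}$, at least one of $h_{10},h_{12}$ must be a relative near-unit; in the subcase where $h_{10}$ is relatively very small one reads off from \eqref{eqSigma9} that $\Sigma_9$ is dominated by $\tfrac{1}{27}h_{12}^3$, so $\left|\Sigma_9/P_{48}^{3/4}\right|_\gP=1$ exactly and the second-largest slope is pinned; in the complementary subcase $v_\gP(\Sigma_{10})$ itself is bounded above and all slopes are controlled. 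The constant $21$ arises from the worse of these two subcases ($16/3+47/3$), not from a single balancing of denominators as you suggest. Without this dichotomy the claim ``all $m$ except at most one'' is not established.
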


\begin{proof}
The most technical part is computing the $\Sigma_i$ as polynomials in the four Igusa modular forms. To do this, we worked with Sage in the formal algebra generated by some sums of $\Theta_{m/2}^4$ with explicit relations (namely, $y_0, \cdots, y_4$ in the notations of \cite{Igusa64bis}, p.396 and 397). Taking away some timeouts probably due to the computer's hibernate mode, the total computation time on a portable PC has been about twelve-hours-long (including verification of the results). The detail of algorithms and construction is available on a Sage worksheet \footnote{This worksheet can be downloaded at \url{http://perso.ens-lyon.fr/samuel.le_fourn/contenu/fichiers_publis/Igusainvariants.ipynb}} (in Jupyter format). An approach based on Fourier expansions might be more efficient, but as there is no clear closed formula for the involved modular forms, we privileged computations in this formal algebra. For easier reading, we slightly modified the Igusa modular forms into $h_4,h_6,h_{10},h_{12}$ defined as 
\begin{equation}
\label{eqdefmodifiedIgusamodularforms}
\left\{
\begin{array}{rcccl}
	h_4 & = & 2 \cdot  \psi_4  & = & {\displaystyle \frac{1}{2} \sum_{m \in E} \Theta_{m/2}^8}\\
	h_6 & = & 2^2 \cdot \psi_6 & = & {\displaystyle \sum_{\scriptscriptstyle \substack{\{m_1,m_2,m_3\} \subset E \\ \textrm{syzygous}}} \pm (\Theta_{m_1/2} \Theta_{m_2/2} \Theta_{m_3/2})^4}  \\
	h_{10}  & = & 2^{15} \cdot \chi_{10} & = & {\displaystyle 2 \prod_{m \in E} \Theta_{m/2}^2} \\
	h_{12} & = &  2^{16} \cdot 3 \cdot  \chi_{12} & = & {\displaystyle \frac{1}{2} \sum_{\scriptscriptstyle \substack{C \subset E\\ C \textrm{ Göpel}}} \prod_{m \in E \backslash C} \Theta_{m/2}^4 }
\end{array}
\right.
\end{equation}
(\cite{Igusa67bis}, p.848 for details on these definitions, notably syzygous triples and Göpel quadruples). The third expression is not explicitly a polynomial in $y_0, \cdots, y_4$, but there is such an expression, given p.397 of \cite{Igusa64bis}. We also used to great benefit (both for understanding and computations) the section I.7.1 of \cite{Strengthesis}.

Now, the computations on Sage gave us the following formulas (the first and last one being trivial given \eqref{eqdefmodifiedIgusamodularforms}, they were not computed by the algorithm)

\begin{align}
	\Sigma_1 & =  2 h_4  \label{eqSigma1} \\
	\Sigma_2 & =  \frac{3}{2} h_4^2 \label{eqSigma2} \\
	\Sigma_3 & =  \frac{29}{2 \cdot 3^3} h_4^3 - \frac{1}{2 \cdot 3^3} h_6^2 + \frac{1}{2 \cdot 3} h_{12} \label{eqSigma3}\\
	\Sigma_4 & =  \frac{43}{2^4 \cdot 3^3} h_4^4 - \frac{1}{2 \cdot 3^3} h_4 h_6^2 + \frac{23}{2 \cdot 3} h_4 h_{12} + \frac{2}{3} h_6 h_{10} \label{eqSigma4} \\
	\Sigma_5 & =  \frac{1}{2^2 \cdot 3^3} h_4^5 - \frac{1}{2^3 \cdot 3^3} h_4^2 h_6^2 + \frac{25}{2^3 \cdot 3} h_4^2 h_{12} - \frac{1}{2 \cdot 3} h_4 h_6 h_{10} + \frac{123}{2^2} h_{10}^2 \label{eqSigma5}\\
	\Sigma_6 & =  \frac{1}{2^2 \cdot 3^6} h_4^6 - \frac{1}{2^2 \cdot 3^6} h_4^3 h_6^2 + \frac{7}{2 \cdot 3^3} h_4^3 h_{12} - \frac{1}{2^2 \cdot 3} h_4^2 h_6 h_{10} \label{eqSigma6} \\
	& +  \frac{47}{2 \cdot 3} h_4 h_{10}^2 + \frac{1}{2^4 \cdot 3^6} h_6^4 - \frac{5}{2^3 \cdot 3^3} h_6^2 h_{12} + \frac{43}{2^4 \cdot 3} h_{12}^2  \nonumber \\
	\vspace{1cm} \\
	\Sigma_7 & =  \frac{1}{2 \cdot 3^4} h_4^2 h_{12} - \frac{1}{2 \cdot 3^4} h_4^3 h_6 h_{10} + \frac{41}{2^3 3^2} h_4^2 h_{10}^2 - \frac{1}{2^2 \cdot 3^4} h_4 h_6^2 h_{12} \label{eqSigma7} \\ & +  \frac{11}{2^2 \cdot 3^2} h_4 h_{12}^2 + \frac{1}{2^2 \cdot 3^4} h_6^3 h_{10} - \frac{19}{2^2 \cdot 3^2} h_6 h_{10} h_{12}  \nonumber\\
	\Sigma_8 & =  \frac{1}{2^2 \cdot 3^3} h_4^3 h_{10}^2 + \frac{1}{2^2 \cdot 3^2} h_4^2 h_{12}^2 - \frac{1}{2 \cdot 3^2} h_4 h_6 h_{10} h_{12} + \frac{5}{2^3 \cdot 3^3} h_6^2 h_{10}^2 - \frac{11}{2^3} h_{10}^2 h_{12} \label{eqSigma8} \\
	\Sigma_9 & =  \frac{-5}{2^2 \cdot 3^2} h_4 h_{10}^2 h_{12} + \frac{7}{2^2 \cdot 3^3} h_6 h_{10}^3 + \frac{1}{3^3} h_{12}^3 \label{eqSigma9} \\
	\Sigma_{10} & =  \frac{1}{2^4} h_{10}^4. \label{eqSigma10}
\end{align}

\begin{rem}
The denominators are always products of powers of 2 and 3. This was predicted by \cite{Ichikawa09}, as all Fourier expansions of $\Theta_{m/2}$ (therefore of the $\Sigma_i$) have integral coefficients. Surprisingly, the result of \cite{Ichikawa09} would actually be false for a $\Z[1/3]$-algebra instead of a $\Z[1/6]$-algebra, as the expression of $\Sigma_3$ (converted as a polynomial in $\psi_4,\psi_6, \chi_{12}$) shows, but this does not provide a counterexample for a $\Z[1/2]$-algebra.
\end{rem}

Now, let $C$ be an hyperelliptic curve of genus 2 on a number field $K$ and $\gP$ a prime ideal of $\Ocal_K$ above 2. We will denote by $|\cdot|$ the norm associated to $\gP$ to lighten the notation. Let $A$ be the jacobian of $C$ and $J_2,J_4,J_6,J_8,J_{10}$ the homogeneous Igusa invariants of the curve $C$, defined as in (\cite{Igusa60}, pp. 621-622) up to a choice of hyperelliptic equation for $C$. We fix $\tau \in \Hcal_2$ such that $A_\tau$ is isomorphic to $A$, which will be implicit in the following (i.e. $h_4$ denotes $h_4(\tau)$ for example). By (\cite{Igusa67bis}, p.848) applied with our normalisation, there is an hyperelliptic equation for $C$ (and we fix it) such that 
\begin{align}
 J_2 & =  \frac{1}{2}  \frac{h_{12}}{h_{10}}  \\
J_4 & =  \frac{1}{2^5 \cdot 3} \left( \frac{h_{12}^2}{h_{10}^2} - 2 h_4 \right) \\
J_6 & =  \frac{1}{2^7 \cdot 3^3} \left( \frac{h_{12}^3}{h_{10}^3} - 6 \frac{h_4 h_{12}}{h_{10}} + 4 h_6 \right) \\
J_8 & =  \frac{1}{2^{12} \cdot 3^3} \left( \frac{h_{12}^4}{h_{10}^4} - 12 \frac{h_4 h_{12}^2}{h_{10}^2} + 16 \frac{h_6 h_{12}}{h_{10}} - 12 h_4^2 \right) \\
J_{10} & =  \frac{1}{2^{13}}  h_{10}.
\end{align}
Let us now figure out the Newton polygons allowing us to bound our theta constants.

$(a)$ If $A$ has potentially good reduction at $\gP$, and this reduction is also a jacobian, by Proposition 3 of \cite{Igusa60}, the quotients $J_2^5/J_{10}, J_4^5 /J_{10}^2, J_6^5 / J_{10}^3$ and $J_{8}^5 / J_{10}^4$ are all integral at $\gP$. Translating it into quotients of modular forms, this gives 
\begin{eqnarray*} 
\left| \frac{J_2^5}{J_{10}} \right|  & = & |2|^8 \left| \frac{h_{12}^5}{h_{10}^6} \right|  \leq 1 \\ 
\left|\frac{J_4^5}{J_{10}^2} \right| & = & |2|^3 \left| \frac{h_{12}^2}{h_{10}^{12/5}} - 2 \frac{h_4}{h_{10}^{2/5}}\right|^{5} \leq 1 \\
\left| \frac{J_6^5}{J_{10}^3} \right| & = & |2|^4 \left| \frac{h_{12}^3} {h_{10}^{18/5}} - 6 \frac{h_4 h_{12}}{h_{10}^{8/5}} + 4 \frac{h_6}{h_{10}^{3/5}} \right|^5 \leq 1 \\
\left| \frac{J_8^5}{J_{10}^4} \right| & = & |2|^{-8} \left| \frac{h_{12}^4}{h_{10}^{24/5}} - 12 \frac{h_4 h_{12}^2}{h_{10}^{14/5}} + 16 \frac{h_6 h_{12}}{h_{10}^{9/5}} - 12 \frac{h_4^2}{h_{10}^{4/5}} \right|^5 \leq 1.
\end{eqnarray*}

By successive bounds on the three first lines, we obtain 
\[
\left| \frac{h_4}{h_{10}^{2/5}} \right|  \leq  |2|^{-21/5},  \quad \left| \frac{h_6}{h_{10}^{3/5}} \right|  \leq  |2|^{-34/5}, \quad 
\left| \frac{h_{12}}{h_{10}^{6/5}} \right|  \leq  |2|^{-8/5}.
\]
Using the expressions of the $\Sigma_i$ (\eqref{eqSigma1} to \eqref{eqSigma10}), we compute that for every $i \in \{1, \cdots, 10\}$, one has $\left| \Sigma_i / h_{10}^{2 i /5} \right|  \leq  |2|^{\lambda_i}$ with the following values of $\lambda_i$ :  
\[
	\begin{array}{c|cccccccccc}
	\hline
	i & 10 & 9 & 8 & 7 & 6 & 5 & 4 & 3 & 2 & 1 \\
	\lambda_i & - \frac{20}{5} & - \frac{44}{5}& - \frac{83}{5}& - \frac{112}{5}& - \frac{156}{5}& - \frac{125}{5}& - \frac{104}{5}& - \frac{73}{5}& - \frac{47}{5}& - \frac{16}{5}  \\
	\hline
	\end{array}
\]
and for $i=10$, it is an equality. Therefore, the highest slope of the Newton polygon is at most $26/5 \cdot v_\gP(2)$, whereas the lowest one is at least $-34/5 \cdot v_\gP(2)$, which gives part $(a)$ of Proposition \ref{propbornesfoncthetaaudessus2} by the theory of Newton polygons.

$(b)$ If $A$ has potentially good reduction at $\gP$ and the semistable reduction is a product of elliptic curves, defining 
\begin{eqnarray}
I_4 & = & J_2^3 - 25 J_4 = \frac{h_4}{2} \label{eqdefI4} \\
I_{12} & = & - 8 J_4^3 + 9 J_2 J_4J_6 - 27 J_6^2 - J_2^2 J_8 = \frac{1}{2^{10} \cdot 3^3} (2 h_4^3 - h_6^2), \label{eqdefI12} \\
P_{48} & = & 2^{12} \cdot 3^3 h_{10}^4 J_8 =  h_{12}^4 - 12 h_4  h_{12}^2 h_{10}^2 + 16 h_6 h_{12} h_{10}^3 - 12 h_4^2 h_{10}^4 \label{eqdefP48}
\end{eqnarray}
(which as modular forms are of respective weights $4,12$ and $48$), by Theorem 1 (parts $(V_*)$ and $(V)$) of \cite{Liu93}, we obtain in the same fashion that
\begin{equation}
\label{eqbornesenfoncP481}
\left| \frac{h_4}{P_{48}^{1/12}} \right| \leq |2|^{-13/3}, \left| \frac{h_6}{P_{48}^{1/8}}\right| \leq |2|^{-3}, \quad \left| \frac{h_{10}}{P_{48}^{5/24}}\right| \leq |2|^{-4/3}.
\end{equation}
Using the Newton polygon for the polynomial of \eqref{eqdefP48} defining $P_{48}$, one deduces quickly that 
\begin{equation}
\label{eqbornesenfoncP482}
\left| \frac{h_{12}}{P_{48}^{1/4}} \right| \leq |2|^{-7/2}.
\end{equation}
As before, with the explicit expression of the $\Sigma_i$, one obtains that the $|\Sigma_i / P_{48}^{i/12}|$ are bounded by $|2|^{\lambda_i}$ with the following values of $\lambda$ : 
\begin{equation}
\label{eqvalSigmairedproduitCE1}
\begin{array}{c|cccccccccc}
\hline
i & 10 & 9 & 8 & 7 & 6 & 5 & 4 & 3 & 2 & 1  \\
\lambda_i  & -\frac{28}{3} & -\frac{71}{6} &  \frac{-53}{3} & \frac{-55}{3} & \frac{-84}{3} & \frac{-71}{3} & \frac{-64}{3} & -14 & \frac{-29}{3} & \frac{-10}{3} \\
\hline
\end{array}
\end{equation}
This implies directly that the highest slope of the Newton polygon is at most $16/3 \cdot v_\gP(2)$. Now, for the lowest slope, there is no immediate bound and it was expected : in this situation, $\Sigma_{10} = 2^{-4} h_{10}^4$ can be relatively very small compared to $P_{48}^{5/6}$. 

As $P_{48}$ is in the ideal generated by $h_{10},h_{12}$ (in other words, is cuspidal) and dominates all modular forms $h_4,h_6,h_{10},h_{12}$, one of $h_{10}$ and $h_{12}$ has to be relatively large enough compared to $P_{48}$ . In practice, we get (with \eqref{eqbornesenfoncP481}, \eqref{eqbornesenfoncP482} and \eqref{eqdefP48})
\[
\left| \frac{h_{12}}{P_{48}^{1/4}} \right| \geq 1 \quad \textrm{or} \quad \left| \frac{h_{10}}{P_{48}^{5/24}} \right| \geq |2|^{13/6}.
\]
Now, if $h_{10}$ is relatively very small (for example, $\left|h_{10}/P_{48}^{5/24} \right| \leq |2|^{19/6} \left|h_{12}/P_{48}^{1/4} \right|$), we immediately get $\left| h_{12}/P_{48}^{1/4} \right| = 1$ and $\left| \Sigma_9 / P_{48}^{3/4} \right| = 1$. Computing again with these estimates for $h_{10}$ and $h_{12}$, we obtain that the $\left|\Sigma_i / P_{48}^{i/12} \right|$ are bounded by $|2|^{\lambda_i}$ with the following slightly improved values of $\lambda$, 
\[
\begin{array}{c|ccccccccc}
\hline
i & 9 & 8 & 7 & 6 & 5 & 4 & 3 & 2 & 1  \\
\lambda_i  & 0 & -\frac{32}{3} & -\frac{51}{3} &  \frac{-84}{3} & \frac{-71}{3} & \frac{-64}{3} & -14 & \frac{-29}{3} & \frac{-10}{3}  \\
\hline
\end{array}	
\]
The value at $i=9$ is exact, hence the second lowest slope is then at least $-\frac{32}{3} \cdot v_\gP(2)$.

If it is not so small, we have a bound on $v_\gP(\Sigma_{10}/P_{48}^{6/5})$, hence the Newton polygon itself is bounded (and looks like in the first situation). In practice, one finds that the lowest slope is at least $-47/3 \cdot v_\gP(2)$, hence all others slopes are at least this value, and this concludes the proof of Proposition \ref{propbornesfoncthetaaudessus2} $(b)$. 
\end{proof}

\begin{rem}
In characteristics $\neq 2,3$, Theorem 1 of \cite{Liu93} and its precise computations pp. 4 and 5 give the following exact shapes of Newton polygons (notice the different normalisation factors).
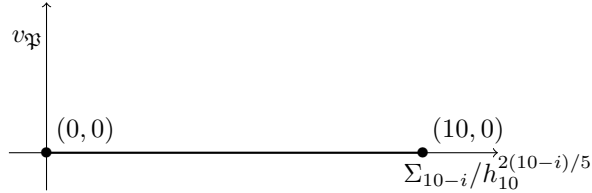
\begin{figure}[H]
	\centering

\begin{tikzpicture}[scale=0.5]
	\draw[->, thin] (-1,0) -- (12,0);
	\draw (-1/2,3) node {$v_\gP$};
 	\draw (12,-1/2) node {$\Sigma_{10-i}/h_{10}^{2(10-i)/5}$};
 	\draw[->, thin] (0,-1) -- (0,4); 
 	\draw (0,0) node {$\bullet$};
 	\draw (0,0) node[above right] {$(0,0)$};
 	\draw (10,0) node {$\bullet$};
 	\draw (10,0) node[above right] {$(10,0)$};
 	\draw[thick] (0,0) -- (10,0);
\end{tikzpicture}
\caption{When the reduction of $A$ is a jacobian}
\end{figure}

\begin{figure}[H]
\centering
\begin{tikzpicture}[scale=0.5]
	\draw[->, thin] (-1,0) -- (11,0);
	\draw (11,-1/2) node {$\Sigma_{10-i}/h_{12}^{(10-i)/3}$};
	\draw[->, thin] (0,-1) -- (0,5); 
	\draw (-1/2,4) node {$v_\gP$};
	\draw (0,3) node {$\bullet$};
	\draw (1,0) node {$\bullet$};
	\draw (10,0) node {$\bullet$};
	\draw[thick] (0,3) -- (1,0);
	\draw[thick] (1,0) -- (10,0);
\end{tikzpicture}
\caption{When the reduction of $A$ is a product of elliptic curves}
\end{figure}
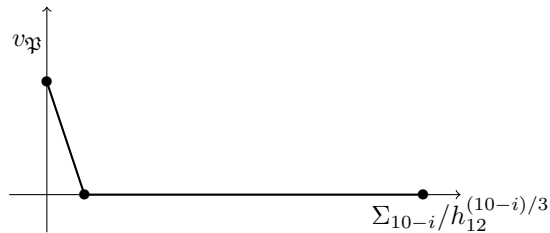
In particular, when $A$ reduces to a jacobian, the theta coordinates all have the same $\gP$-adic norm and when $A$ reduces to a product of elliptic curves, exactly one of them has smaller norm : in other words, we reproved Proposition \ref{propalgthetafoncetreduchorsde2}, and the Newton polygons have a very characteristic shape.

The idea behind the computations above is that in cases $(a)$ and $(b)$ (with other normalisation factors), the Newton polygons have a shape close to these ones, therefore estimates can be made. It would be interesting to see what the exact shape of the Newton polygons is, to maybe obtain sharper results.
\end{rem}

\subsection{Wrapping up the estimates and end of the proof}
\label{subsecfinalresultRungeCEexplicite}
We can now prove the explicit refined version of Theorem \ref{thmtubularRungeproduitCE}, namely Theorem \ref{thmproduitCEexplicite}.

\begin{proof}[Proof of Theorem \ref{thmproduitCEexplicite}]
	
In case $(a)$, one can avoid the tubular assumption for the (unique) archimedean place of $K$: indeed, amongst the ten theta coordinates, there remain 4 which are large enough with no further assumption. As $|s_P|<4$, there remains one theta coordinates which is never too small (at any place). In practice, normalising the projective point $\psi(P)$ by this coordinate, one obtains with Propositions \ref{proparchimedeanbound} $(a)$ (archimedean place), \ref{propalgthetafoncetreduchorsde2} (finite places not above 2) and  \ref{propbornesfoncthetaaudessus2} (finite places above 2)
\[
	h(\psi(P)) \leq - 4 \log(0.42) + \frac{1}{[K:\Q]} \sum_{v|2} n_v |2|^{21/2} \leq 10.75 
\]
after approximation.

In case $(b)$, one has to use the tubular neighbourhood implicitly given by the parameter $t$, namely Proposition \ref{proparchimedeanbound} $(b)$ for archimedean places, again with Propositions \ref{propalgthetafoncetreduchorsde2} and \ref{propbornesfoncthetaaudessus2} for the finite places, hence we get 
\[
	h(\psi(P)) \leq 4 \log(e^{\pi t}/1.33) + \frac{1}{[K:\Q]} \sum_{v|2} n_v |2|^{21/2} \leq 4 \pi t + 6.14
\]
after approximation.

Finally, we deduce from there the bounds on the stable Faltings height by Corollary 2.2 of \cite{Pazuki12b} (with its notations, $h_\Theta(A,L) = h(\psi(P))/4$).
\end{proof}

It would be interesting to give an analogous result for Theorem \ref{thmtubularRungegeneral}, and the estimates for archimedean and finite places not above 2 should not give any particular problem. For finite places above 2, the method outlined above can only be applied if, taking the symmetric polynomials $\Sigma_1, \cdots, \Sigma_{f(n)}$ in well-chosen powers $\Theta_{\widetilde{a}/n,\widetilde{b/n}} (\tau)$ for $\widetilde{a},\widetilde{b} \in \Z^g$, we can figure out by other arguments the largest rank $k_0$ for which $\Sigma_{k_0}$ is cuspidal but not in the ideal generated by $h_{10}$. Doing so, we could roughly get back the pictured shape of the Newton polygon when $h_{10}$ is relatively very small (because then $\Sigma_k$ is relatively very small for $k>k_0$ by construction). Notice that for this process, one needs some way to theoretically bound the denominators appearing in the expressions of the $\Sigma_i$ in $h_4,h_6,h_{10},h_{12}$, but if this works, the method can again be applied.

\bibliographystyle{alphaSLF}
\bibliography{bibliotdn}

\begin{thebibliography}{ABMMOG14}

\bibitem[ABMMOG14]{ArtalBartolo14}
E.~Artal~Bartolo, J.~Martín-Morales, and J.~Ortigas-Galindo.
\newblock Cartier and {Weil} divisors on varieties with quotient singularities.
\newblock {\em Int. Journ. Math.}, 25(11), 2014.

\bibitem[AM94]{AtiyahMacDonald}
M.~Atiyah and I.~Macdonald.
\newblock {\em Introduction to commutative algebra}.
\newblock Westview Press, 1994.

\bibitem[APM16]{Torsionthetadivisors}
R.~Auffarth, G.~P. Pirola, and R.~S. Manni.
\newblock Torsion points on theta divisors.
\newblock {\em Proc. Amer. Math. Soc.}, 2016.

\bibitem[BG00]{BoxallGrant}
J.~Boxall and D.~Grant.
\newblock Examples of torsion points on genus two curves.
\newblock {\em Trans. Amer. Math. Soc.}, 352(10):4533--4555, 2000.

\bibitem[BG06]{BombieriGubler}
E.~Bombieri and W.~Gubler.
\newblock {\em Heights in diophantine geometry}.
\newblock Cambridge University Press, 2006.

\bibitem[Bom83]{BombieridecompWeil}
E.~Bombieri.
\newblock On {W}eil's ``{T}h\'eor\`eme de d\'ecomposition''.
\newblock {\em Amer. J. Math.}, 105(2):295--308, 1983.

\bibitem[Bor81]{Borel81}
A.~Borel.
\newblock Stable real cohomology of arithmetic groups {{I}{I}}.
\newblock In {\em Manifolds and {L}ie groups}, pages 21--55. Birkh\"auser,
  1981.

\bibitem[BP11]{BiluParent09}
Y.~Bilu and P.~Parent.
\newblock Runge's method and modular curves.
\newblock {\em Int. Math. Res. Not.}, (9):1997--2027, 2011.

\bibitem[Car57]{CartanPlongements57}
H.~Cartan.
\newblock Plongements projectifs.
\newblock {\em Séminaire Henri Cartan}, 10(2):1--19, 1957.

\bibitem[CLZ09]{CorvajaLevinZannier}
P.~Corvaja, A.~Levin, and U.~Zannier.
\newblock Integral points on threefolds and other varieties.
\newblock {\em Tohoku Mathematical Journal}, 61:589--601, 2009.

\bibitem[CS57]{CartanSatake57}
H.~Cartan and I.~Satake.
\newblock Démonstration du théorème fondamental.
\newblock {\em Séminaire Henri Cartan}, 10(2):1--12, 1957.

\bibitem[CSA86]{CornellSilvermanArithmeticGeometry}
G.~Cornell, J.~Silverman, and M.~Artin.
\newblock {\em Arithmetic {G}eometry}.
\newblock Springer-Verlag, 1986.

\bibitem[Deb99]{Debarre99}
O.~Debarre.
\newblock {\em Tores et variétés abéliennes complexes}.
\newblock EDP Sciences, 1999.

\bibitem[DP02]{DavidPhilippon}
S.~David and P.~Philippon.
\newblock Minorations des hauteurs normalis\'ees des sous-vari\'et\'es de
  vari\'et\'es abeliennes. {{II}}.
\newblock {\em Comment. Math. Helv.}, 77(4):639--700, 2002.

\bibitem[FC90]{ChaiFaltings}
G.~Faltings and C.-L. Chai.
\newblock {\em Degeneration of abelian varieties}.
\newblock Springer-Verlag, 1990.

\bibitem[Fou17]{LeFourn4}
S.~L. Fourn.
\newblock Sur la méthode de runge et les points entiers de certaines
  variétés modulaires de siegel.
\newblock {\em Comptes Rendus Mathématique}, 2017.

\bibitem[Har77]{Hartshorne}
R.~Hartshorne.
\newblock {\em Algebraic {G}eometry}.
\newblock Springer-Verlag, 1977.

\bibitem[HW00]{HoffmanWeintraub00}
J.~Hoffman and S.~Weintraub.
\newblock The {S}iegel {M}odular {V}ariety of {D}egree {T}wo and {L}evel
  {T}hree.
\newblock {\em Trans. Amer. Math. Soc.}, 353(3):3267--3305, 2000.

\bibitem[Ich09]{Ichikawa09}
T.~Ichikawa.
\newblock Siegel modular forms of degree 2 over rings.
\newblock {\em Journal of Number Theory}, 129(4):818 -- 823, 2009.

\bibitem[Igu60]{Igusa60}
J.-I. Igusa.
\newblock Arithmetic variety of moduli for genus two.
\newblock {\em Ann. of Math. (2)}, 72:612--649, 1960.

\bibitem[Igu64]{Igusa64bis}
J.-I. Igusa.
\newblock On {S}iegel {M}odular {F}orms of {G}enus {T}wo ({II}).
\newblock {\em Amer. Journ. Math.}, 86, 1964.

\bibitem[Igu66]{Igusa66}
J.-I. Igusa.
\newblock On the {G}raded {R}ing of {T}heta {C}onstants ({I}{I}).
\newblock {\em Amer. J. Math.}, 88:221--236, 1966.

\bibitem[Igu67]{Igusa67bis}
J.-I. Igusa.
\newblock Modular forms and projective invariants.
\newblock {\em Amer. J. Math.}, 89:817--855, 1967.

\bibitem[Kli90]{Klingen}
H.~Klingen.
\newblock {\em Introductory {L}ectures on {S}iegel {M}odular {F}orms}.
\newblock Cambridge University Press, 1990.

\bibitem[Lev08]{Levin08}
A.~Levin.
\newblock Variations on a theme of {R}unge: effective determination of integral
  points on certain varieties.
\newblock {\em J. Théor. Nombres Bordeaux}, pages 385--417, 2008.

\bibitem[LF15]{LeFournthese2}
S.~Le~Fourn.
\newblock Points entiers et rationnels sur des courbes et variétés modulaires
  de dimension supérieure.
\newblock Thèse, Université de Bordeaux, 2015.

\bibitem[LF16]{LeFourn1}
S.~Le~Fourn.
\newblock Surjectivity of {G}alois representations associated with quadratic
  {$\mathbb{Q}$}-curves.
\newblock {\em Math. Ann.}, 365(1):173--214, 2016.

\bibitem[Liu93]{Liu93}
Q.~Liu.
\newblock Courbes stables de genre {$2$} et leur sch\'ema de modules.
\newblock {\em Math. Ann.}, 295(2):201--222, 1993.

\bibitem[Mum66]{Mumford66}
D.~Mumford.
\newblock On the {E}quations {D}efining {A}belian {V}arieties, {I}.
\newblock {\em Invent. Math.}, 1:287--354, 1966.

\bibitem[Mum67]{Mumford67}
D.~Mumford.
\newblock On the {E}quations {D}efining {A}belian {V}arieties, {I}{I}.
\newblock {\em Invent. Math.}, pages 75--135, 1967.

\bibitem[Mum83]{Mumford83}
D.~Mumford.
\newblock Towards an {E}numerative {G}eometry of the {M}oduli {S}pace of
  {C}urves.
\newblock In {\em Arithmetic and Geometry}, volume~36, pages 271--328.
  Birkhäuser, 1983.

\bibitem[Mum84]{MumfordTataII}
D.~Mumford.
\newblock {\em Tata {L}ectures on {T}heta {I}{I}}.
\newblock Birkhauser, 1984.

\bibitem[Mum86]{MumfordAbVar}
D.~Mumford.
\newblock {\em Abelian {V}arieties}.
\newblock Oxford University Press, 1986.

\bibitem[Mum87]{MumfordTata}
D.~Mumford.
\newblock {\em Tata {L}ectures on {T}heta {I}}.
\newblock Birkh\"auser, 1987.

\bibitem[Nam80]{Namikawa}
Y.~Namikawa.
\newblock {\em Toroidal compactification of {S}iegel spaces}, volume 812 of
  {\em Lecture Notes in Mathematics}.
\newblock Springer, 1980.

\bibitem[OU73]{OortUeno}
F.~Oort and K.~Ueno.
\newblock Principally polarized abelian varieties of dimension two or three are
  {J}acobian varieties.
\newblock {\em J. Fac. Sci. Univ. Tokyo Sect. IA Math.}, 20:377--381, 1973.

\bibitem[Paz12]{Pazuki12b}
F.~Pazuki.
\newblock Theta height and {F}altings height.
\newblock {\em Bull. Soc. Math. Fr.}, 1:19--49, 2012.

\bibitem[Paz13]{Pazuki12}
F.~Pazuki.
\newblock Minoration de la hauteur de {N}\'eron-{T}ate sur les surfaces
  ab\'eliennes.
\newblock {\em Manuscripta Math.}, 142(1-2):61--99, 2013.

\bibitem[Run87]{Runge1887}
C.~Runge.
\newblock Ueber ganzzahlige lösungen von gleichungen zwischen zwei
  veränderlichen.
\newblock {\em J. Reine Angew. Math.}, 100:425--435, 1887.

\bibitem[Sch08]{SchoofCatalan}
R.~Schoof.
\newblock {\em Catalan's {C}onjecture}.
\newblock Springer-Verlag, 2008.

\bibitem[Str10]{Strengthesis}
M.~Streng.
\newblock Complex multiplication of abelian surfaces.
\newblock PhD Thesis, University of Leiden,
  https://openaccess.leidenuniv.nl/handle/1887/15572, 2010.

\bibitem[vdG82]{vdG82}
G.~van~der Geer.
\newblock On the geometry of a {S}iegel modular threefold.
\newblock {\em Math. Ann.}, 260(3):317--350, 1982.

\bibitem[Voj87]{Vojtadiophapp}
P.~Vojta.
\newblock {\em Diophantine Approximations and Value Distribution Theory}.
\newblock Lecture Notes in Mathematics 1239. Springer-Verlag Berlin Heidelberg,
  1987.

\bibitem[Wei92]{Weissauer92}
R.~Weissauer.
\newblock The {P}icard group of {S}iegel modular threefolds.
\newblock {\em J. Reine Angew. Math.}, 430:179--211, 1992.

\end{thebibliography}

\end{document}